\newtheorem{theorem}{Theorem}[section]
\newtheorem{lemma}[theorem]{Lemma}
\newtheorem{proposition}[theorem]{Proposition}
\newtheorem{corollary}[theorem]{Corollary}
\theoremstyle{definition}
\newtheorem{definition}[theorem]{Definition}
\theoremstyle{remark}
\newtheorem{remark}[theorem]{Remark}
\newcommand{\R}{\mathbb{R}}
\newcommand{\E}{\mathbb{E}}
\newcommand{\Z}{\mathbb{Z}}
\newcommand{\Prob}{\mathbb{P}}
\newcommand{\conv}{\mathrm{conv}}
\newcommand{\ls}{\le}
\newcommand{\gr}{\ge}
\newcommand{\vol}{{\rm{vol}}}
\begin{document}

\title{Discrete log-concavity and threshold phenomena for atomic measures}

\author{
  Silouanos Brazitikos\thanks{University of Crete, Greece. Email: \texttt{silouanb@uoc.gr}}
  \and
  Minas Pafis\thanks{National and Kapodistrian University of Athens, Greece. Email: \texttt{mipafis@math.uoa.gr}}
}

\date{} 

\maketitle

\begin{abstract}
We investigate threshold phenomena for random polytopes $K_N=\conv\{X_1,\dots,X_N\}$ generated by i.i.d.\ samples
from an atomic law $\mu$. We identify and provide a missing justification in the
discrete-hypercube threshold argument
of Dyer--F\"uredi--McDiarmid, where the supporting half-space estimate
is derived via a smooth (gradient/uniqueness) step
that can fail at boundary contact points. We then compare threshold-driving mechanisms in the continuous
log-concave setting -- through the Cram\'{e}r transform and Tukey's half-space depth -- with their discrete analogues. Within this framework,  we establish a sharp threshold for
lattice $p$-balls $\mathbb{Z}^n \cap rB_p^n$. Finally, we present structural counterexamples
showing that sharp thresholds need not hold in general discrete log-concave settings.
\end{abstract}

\section{Introduction}

Let $\mu$ be a Borel probability measure on $\R^n$ and $X_1,X_2,\dots$ be i.i.d. random vectors in $\mathbb{R}^n$ with law $\mu$. For $N\ge n+1$ set
\[
K_N:=\conv\{X_1,\dots,X_N\}\subseteq\R^n.
\]
There are two natural expectations associated with this model.

\paragraph{}
For general $\mu$ one considers the \emph{expected captured mass}
\[
F_{n,N}(\mu)\ :=\ \E_{\mu^N}\,\big[\mu(K_N)\big]
\ =\ \int_{\R^n}\Prob\left(x\in K_N\right)\,d\mu(x),
\]
a quantity that has attracted recent attention; see e.g. \cite{BC,BGP2,BGP,Giannop-Tziotziou,Paf}.
When $\mu$ is supported on a convex body $K\subseteq\R^n$, or on its boundary, one may also study the normalized expected volume
\[
V_{n,N}(\mu,K)\ :=\ \frac{1}{\vol_n(K)}\,\E_{\mu^N}\big[\vol_n(K_N)\big],
\]
which is classical in stochastic convex geometry; see \cite{BCGTT,BKT,CTV,DFM,FPT,GaGia,Pivov}. In the case where $\mu=\mu_K$ is the uniform probability measure on $K$, the two quantities coincide, i.e. $F_{n,N}(\mu_K)=V_{n,N}(\mu_K,K)$. The present work focuses primarily on the functional $F_{n,N}(\mu)$, which remains meaningful for atomic measures.

\paragraph{Thresholds and the half-space mechanism.}
A basic question is whether $F_{n,N}(\mu)$ (or $V_{n,N}(\mu,K)$) exhibits a sharp transition
from near $0$ to near $1$ as $N$ grows, typically at an exponential scale $e^{cn}$ (see for example \cite{BC,FPT,GaGia,Paf}),
although in certain regimes the transition has been shown to occur at
super-exponential scales in $n$ (see e.g.  \cite{BCGTT,BC}). A convenient way to encode the geometry is via half-spaces. Define the (population) half-space depth
\[
q_\mu(x):=\inf_{\theta\in S^{n-1}} \mu\big(\{y:\ \langle y,\theta\rangle\ge \langle x,\theta\rangle\}\big).
\]
Depth enters because $x\notin K_N$ if and only if there exists a closed half-space $H^+$ containing $x$ whose interior avoids all samples, and hence
estimates for $q_\mu$ translate into upper/lower bounds on $\E_{\mu^N} \big[\mu(K_N)\big]$ by standard union-bound and separation arguments (in various forms,
already present in \cite{DFM} and developed further in later works).

\paragraph{The rate-function viewpoint.}
Large-deviation heuristics suggest choosing as comparison sets the sublevel sets of a convex rate function. With
\[
\Lambda_\mu(\xi):=\log\int_{\R^n} e^{\langle \xi,z\rangle}\,d\mu(z),
\qquad
\Lambda_\mu^\ast(x):=\sup_{\xi\in\R^n}\{\langle \xi,x\rangle-\Lambda_\mu(\xi)\},
\]
one sets
\[
B_r(\mu):=\{x\in\R^n:\ \Lambda_\mu^\ast(x)\le r\}.
\]
The threshold method then aims to prove supporting half-space estimates of the schematic form
\[
\inf_{x\in B_r(\mu)} q_\mu(x)\ \gtrsim\ e^{-r},
\]
which, combined with the depth-to-hull inequalities, yield a threshold description in terms of a typical scale of $\Lambda_\mu^\ast$.
In the continuous log-concave setting, this paradigm has been made precise in recent work, and sharp depth--Cram\'er comparisons
have been established (see, e.g., \cite{BC, BGP}).

\paragraph{The issue in the discrete cube proof of DFM.}
For atomic measures, the supporting half-space step contains a genuine subtlety: For a translation argument to work, $B_{r}(\mu)$ must be defined appropriately so as to be compact. However, in this case, a supporting
hyperplane to $B_r(\mu)$ at a boundary point $x$ need not be determined by a unique normal direction, and $\Lambda_\mu^\ast$ need not be
differentiable at $x$. In such cases, the frequently used ``smooth'' representation
\[
H^+(x)=\{y:\ \langle \nabla \Lambda_\mu^\ast(x),y-x\rangle\ge 0\}
\]
is not available: the correct object is the \emph{normal cone} (or subdifferential) at $x$, which may have dimension larger than $1$.

Our point is that this phenomenon is not merely a complication of exotic examples: it can occur in the hypercube setting, because the natural
convex sets governing the argument may touch boundary faces where the Legendre transform is not differentiable and where supporting hyperplanes
are not singled out by a gradient. In the discrete-hypercube threshold argument of \cite{DFM}, the lower-threshold step implicitly appeals to a
smooth/uniqueness principle for selecting and controlling supporting half-spaces; however, in the genuinely atomic regime this principle requires
additional justification. As stated, the argument does not address this boundary-contact scenario, leaving a gap precisely at the point where one
must convert geometric support information into a uniform lower bound on half-space mass.

\paragraph{Two directions in this paper (one positive, one negative).}
Our contributions split into two parts, with different scope and different conclusions.

\smallskip

(I) \emph{A corrected positive theory under independence.}
Assume $X=(X_1,\dots,X_n)$ has i.i.d. symmetric coordinates with atomic distribution.
We work within this setting to recover a threshold picture, relying on the method that first appeared in \cite{DFM} and was subsequently developed in \cite{GaGia,Paf}. We provide the missing justification of the Dyer--F\"uredi--McDiarmid method.
As a consequence, we obtain a threshold phenomenon for probability measures whose i.i.d.\ coordinates are symmetric, compactly supported, with mass at the endpoint.
We also give a p.m.f related condition that guarantees threshold phenomena for unbounded discrete log-concave marginals.  
In this way, a large class of discrete log-concave product measures falls back into the reach of the threshold theory—either because they are covered by the corrected Dyer–F\"uredi–McDiarmid argument once the technical gap is repaired, or because their mass functions satisfy the condition that guarantees the threshold mechanism.

\smallskip

(II) \emph{A structural framework and counterexamples beyond independence.}
We also develop a discrete convex-analytic framework for general discrete log-concave laws via convex-extensible potentials and log-concave
extensions to $\R^n$. This part is \emph{not} a general threshold theorem. Its purpose is to clarify what can go wrong in the discrete world:
we give integrability criteria for the extension of the p.m.f, and we construct explicit families showing that sharp thresholds may fail even under natural
discrete log-concavity assumptions. In particular, we construct probability measures on $\mathbb{R}^n, \, n \geq 2$, having specific properties, but with $\E_\mu[\Lambda_\mu^\ast]=+\infty$. The same properties for one-dimensional probability measures, would imply finiteness of all moments of $\Lambda_\mu^\ast$. We also provide examples where the variance-type
parameter governing sharp thresholds in the continuous log-concave theory diverges, thereby preventing a Brazitikos--Chasapis / Brazitikos--Giannopoulos--Pafis
type threshold conclusion \cite{BC,BGP}. The examples in the last section should therefore be read as \emph{measures with no  threshold},
and as evidence that substantial structure (such as independence together with quantitative regularity) is essential in atomic settings.

\paragraph{Examples and calibration.}
To anchor the discussion, we include benchmark computations and contrasting regimes. For instance, for the uniform measure on $\{0,1\}^n$, $\mu(K_N)$ equals the fraction of distinct sampled vertices, leading to an exact formula and a coupon-collector scale.
We also analyze the uniform measure on lattice points of $rB_p^n$, where $F_{n,N}(\mu)$ undergoes a transition on sub-exponential scales, and
we explain how these models fit into (or fall outside) the threshold theory in \cite{BC,BGP}.

The next section sets notation and states our main results.

\section{Notation and main statements}
First, we introduce some basic notation and definitions. We work in ${\mathbb R}^n$, which is equipped with the standard inner product  $\langle\cdot ,\cdot\rangle $. We denote the corresponding Euclidean norm by $\|\cdot \|_2$, and write $B_2^n$ for the Euclidean unit ball and $S^{n-1}$ for the unit sphere. Volume in $\mathbb{R}^n$ is denoted by $\vol_n$. For a set $A \subseteq \mathbb{R}^n$ we write $|A|$ for its cardinality and $\mathds{1}_A$ for its indicator function. For $p \geq 1$ we denote by $\|\cdot\|_p$ the $\ell_p$-norm and by $B_p^n$ the corresponding unit ball. We also define $\|x\|_0=|\{1\leq i \leq n: x_i \neq 0\}|$, i.e. the number of non-zero coordinates of some $x \in \mathbb{R}^n$.

For any hyperplane $$H=\{y \in \mathbb{R}^n: \langle y,u\rangle =\langle x , u \rangle\}$$ that passes through $x \in \mathbb{R}^n$ and is perpendicular to some $u \in S^{n-1}$ we denote by $H^+$ the half-space $$H^+=\{y \in \mathbb{R}^n: \langle y,u\rangle \geq \langle x , u \rangle\}$$

For a function $g: \mathbb{R}^n \to \mathbb{R \cup \{+\infty\}}$, not identically infinite, we define its convex conjugate $$g^\ast(x)\coloneq\sup_{\xi \in \mathbb{R}^n} \ \{  \langle x,\xi\rangle - g(\xi)\}, \ x \in \mathbb{R}^n.$$ The biconjugate of $g$ satisfies $$g^{\ast \ast}(x)\coloneq(g^\ast)^\ast(x)=\sup \{ c(x)| \ c: \mathbb{R}^n \to \mathbb{R} \cup \{+\infty\} \text{ is convex, lsc and } c\leq g  \}.$$ We refer to \cite{Rock} for basic facts from convex analysis.

We use $f(x)\sim g(x)$, as $x \to A$, to declare that  $\lim\limits_{x \to A} \frac{f(x)}{g(x)}=1$. If $f,g \geq 0$, $f=O(g(n))$ means that $f(n) \leq Cg(n)$ for an absolute constant $C>0$. We also write $f(n)=o(g(n))$ for $\lim\limits_{n \to \infty} \frac{f(n)}{g(n)}=0$ and $f(n)=\omega(g(n))$ for $\lim\limits_{n \to \infty} \frac{f(n)}{g(n)}=+\infty$.

We say that a Borel probability measure $\mu $ on $\mathbb{R}^n$ is even if $\mu (-B)=\mu (B)$ for every Borel subset $B$
of ${\mathbb R}^n$. A random vector $Y$ in $\mathbb{R}^n$ is symmetric, if its law is even.  

A Borel probability measure $\mu$ on $\mathbb{R}^n$ is called full-dimensional if $\mu(H)<1$ for every
hyperplane $H$ in ${\mathbb R}^n$. We also say that $\mu$ is log-concave if it is full-dimensional and
$$\mu(\lambda A+(1-\lambda)B) \gr \mu(A)^{\lambda}\mu(B)^{1-\lambda}$$
for any pair of compact sets $A,B$ in ${\mathbb R}^n$ and any $\lambda \in (0,1)$. Borell \cite{Borell-1974} has proved that, under these assumptions, $\mu $ has a log-concave density $f_{{\mu }}$. Recall that a function $f:\mathbb R^n \rightarrow [0,\infty)$ is called log-concave if its support $\{f>0\}$ is a convex set in ${\mathbb R}^n$ and the restriction of $\log{f}$ to it is concave. If $f$ is an integrable log-concave function on $\mathbb{R}^n$, there exist constants $A,B>0$ such that $f(x)\ls Ae^{-B\|x\|_2}$ for all $x\in {\mathbb R}^n$ (see \cite[Lemma~2.2.1]{BGVV-book}).

Let $\mu$ be a Borel probability measure on $\mathbb R^n$. The population or Tukey's half-space depth is the function $$q_\mu(x):=\inf_{\theta\in S^{n-1}} \mu\big(\{y:\ \langle y,\theta\rangle\ge \langle x,\theta\rangle\}\big)$$ and was introduced by Tukey in \cite{Tukey-1975}.  The moment generating function (m.g.f) of $\mu$ is defined by
\begin{equation*}\phi_{\mu }(\xi )=\int_{{\mathbb R}^n}e^{\langle\xi
,z\rangle }d\mu(z)\end{equation*}
and the log-m.g.f is the function $$\Lambda_{\mu}(\xi):=\log \phi_{\mu}(\xi).$$
The Cram\'{e}r transform $\Lambda_{\mu}^{\ast }$ of $\mu $ is the convex conjugate of $\Lambda_{\mu}$, i.e.
\begin{equation*}\Lambda_{\mu }^{\ast }(x):= \sup_{\xi\in {\mathbb R}^n} \left\{ \langle x, \xi\rangle - \Lambda_{\mu }(\xi )\right\}.\end{equation*}
Note that $\Lambda_{\mu}^{\ast}$ is a non-negative convex function.

For any $r>0$ we define
$$B_r(\mu)=\{x\in\mathbb{R}^n:\Lambda_{\mu}^{\ast}(x)\ls r\}.$$
For every $x \in \mathbb{R}^n$ we have that $q_{\mu } (x)\ls\exp (-\Lambda_{\mu }^{\ast }(x))$ which is a direct
consequence of the definitions (see e.g. \cite[Lemma~3.1]{BGP}). We note that if $X$ is a random vector distributed according to $\mu$, then we also write $\phi_X, q_X, \, \Lambda_X,\, \Lambda_X^\ast $, respectively.

If the coordinates of $X$ are independent random variables, then $$\Lambda_X^\ast(x_1,\ldots,x_n)=\sum_{i=1}^n \Lambda_{X_i}^{\ast}(x_i).$$

For a probability measure $\mu$ on $\mathbb{R}^n$ we also define the quantity $$\beta(\mu):= \frac{{\rm Var}_{\mu} (\Lambda_{\mu}^\ast) }{(\E_{\mu}[\Lambda_{\mu}^\ast])^2},$$ that plays a crucial role in the threshold phenomena. Notice that if $\mu=\nu \otimes\cdots \otimes \nu \,$ ($k$-times) is a product measure, then $$\beta(\mu)=\frac{\beta(\nu)}{k}.$$ We also note that if $\mu$ is a probability measure on $\mathbb{R}$, the inequality $e^{\Lambda_{\mu}^\ast} \leq 1/q_\mu$ and \cite[Proposition 3.2 b)]{BC} yield  $\beta(\mu)<+\infty$. However, in $\mathbb{R}^n$ this quantity can be infinite. Giannopoulos and Tziotziou showed in \cite{Giannop-Tziotziou} that if $\mu$ is a log-concave probability measure on $\mathbb{R}^n$, then $\Lambda_{\mu}^\ast$ has all its moments finite. In our work, we associate the integrability of the continuous log-concave extension of a discrete log-concave p.m.f (see Definitions~\ref{def:convex-extension} and ~\ref{def:discrete-log-vectors}) with the finiteness of moments of $\Lambda_{\mu}^\ast$.

\begin{theorem}\label{thm:lambda-star-moments} Let $X$ be a discrete log-concave random vector in $\mathbb{Z}^n$ with p.m.f $p:\mathbb{Z}^n \to [0,1) $ and let $f$ be its log-concave extension. If $f$ is integrable, then for every $q \geq 1$ we have $$\mathbb{E}_X[(\Lambda_{X}^{\ast})^q] <+\infty.$$
\end{theorem}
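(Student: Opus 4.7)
The strategy is to replace $\Lambda_X^{\ast}$ by the transparent pointwise majorant $-\log p$ and then exploit the exponential tail decay that integrability forces on the log-concave extension $f$. For every $k\in\mathbb{Z}^n$ with $p(k)>0$, the trivial lower bound
\[
\Lambda_X(\xi)\;=\;\log\sum_{j\in\mathbb{Z}^n} p(j)\,e^{\langle\xi,j\rangle}\;\geq\;\log p(k)+\langle \xi,k\rangle
\]
gives $\langle k,\xi\rangle-\Lambda_X(\xi)\leq -\log p(k)$ uniformly in $\xi$, and hence $\Lambda_X^{\ast}(k)\leq -\log p(k)$. The theorem is thereby reduced to the entropy-type estimate
\[
\mathbb{E}_X\!\left[(\Lambda_X^{\ast})^q\right]\;\leq\;\sum_{k\in\mathbb{Z}^n}p(k)\,|\log p(k)|^q\;<\;+\infty.
\]

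\textbf{Tail estimate.} Since $f$ is log-concave and integrable, \cite[Lemma~2.2.1]{BGVV-book} supplies constants $A,B>0$ with $f(x)\leq A\,e^{-B\|x\|_2}$ on $\mathbb{R}^n$, so $p(k)=f(k)$ yields $s_k:=-\log p(k)\geq B\|k\|_2-\log A$ whenever $p(k)>0$. The function $\phi(s):=e^{-s}s^q$ is continuous on $[0,\infty)$, attains its maximum $(q/e)^q$ at $s=q$, and is strictly decreasing on $[q,\infty)$. Fix $R>0$ large enough that $BR-\log A\geq q$. For $\|k\|_2<R$ only finitely many lattice points contribute, each bounded by $(q/e)^q$. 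For $\|k\|_2\geq R$ the monotonicity of $\phi$ yields
\[
p(k)\,|\log p(k)|^q \;=\;\phi(s_k)\;\leq\;\phi\!\bigl(B\|k\|_2-\log A\bigr)\;=\; A\,e^{-B\|k\|_2}\bigl(B\|k\|_2-\log A\bigr)^q,
\]
and the resulting lattice sum is dominated, up to dimensional constants, by $\int_{\mathbb{R}^n} e^{-B\|x\|_2}\bigl(1+\|x\|_2\bigr)^q\,dx<+\infty$. Summing the two contributions gives $\mathbb{E}_X[(\Lambda_X^{\ast})^q]<+\infty$.

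\textbf{Remarks on the difficulty.} The argument is rather soft once the reduction $\Lambda_X^{\ast}\leq -\log p$ is in hand; the substance of the proof is the classical fact that integrability plus log-concavity forces exponential tails, which in turn makes $|\log p|^q$ integrable against $p$. A seemingly more sophisticated route would compare $\Lambda_X$ directly with the Cram\'er transform of the normalized continuous density $f/\!\int f$ and invoke Giannopoulos--Tziotziou \cite{Giannop-Tziotziou}; however, this demands a universal lattice-sum versus integral inequality for log-concave functions, which fails in general (a thin indicator of a small cube has positive integral but zero lattice sum). The direct entropic route above sidesteps this peakedness obstacle entirely and uses only the exponential decay guaranteed by the hypothesis.
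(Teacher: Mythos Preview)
Your proof is correct and follows essentially the same route as the paper: both reduce to the pointwise bound $\Lambda_X^{\ast}(k)\leq -\log p(k)$ via the single-term lower bound on the m.g.f., then use the exponential tail $f(x)\leq Ae^{-B\|x\|_2}$ guaranteed by integrability of the log-concave extension to show $\sum_k p(k)\,|\log p(k)|^q<\infty$. The only cosmetic difference is in the tail bookkeeping---you use monotonicity of $s\mapsto s^q e^{-s}$ on $[q,\infty)$, while the paper uses $\tilde g(k)^q\leq e^{\tilde g(k)/2}$ for large $\|k\|_2$ and then bounds $e^{-\tilde g(k)/2}\leq \sqrt{A}\,e^{-B\|k\|_2/2}$---but these are equivalent calculations.
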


Before we proceed to the threshold phenomena, we need to formalize this concept. Given a Borel probability measure $\mu$ on $\mathbb{R}^n$ we set 
\[
\varrho_1(\mu,\delta):= \sup\{r>0: \E_{\mu^N}[\mu(K_N)]\ls \delta, \hbox{ for every } N\ls \exp(r)\},
\]
and
\[
\varrho_2(\mu,\delta):= \inf\{r>0: \E_{\mu^N}[\mu(K_N)]\gr 1-\delta, \hbox{ for every } N\gr \exp(r)\},
\]
where $\mu^N=\mu \otimes\cdots\otimes\mu \, $ ($N$-times).

With this notation, we give the following definition.
\begin{definition}\label{def:sharp-threshold}
Let $(\mu_n)_{n\in\mathbb{N}}$ be a sequence of probability measures $\mu_n$ on $\mathbb{R}^n$, and $(T_n)_{n \in \mathbb{N}}$ a sequence of positive numbers. 

 We say that $(\mu_n)_{n\in\mathbb{N}}$ exhibits a weak threshold around $(T_n)_{n \in \mathbb{N}}$ if for every $\delta, \varepsilon \in (0,1)$ there exists $n_0(\delta,\varepsilon) \in \mathbb{N}$ such that for every $n \geq n_0$ \[
\varrho_1(\mu_n,\delta)\gr(1-\varepsilon)T_n \qquad\hbox{ and }\qquad \varrho_2(\mu_n,\delta)\ls(1+\varepsilon)T_n.
\]
We say that $(\mu_n)_{n\in\mathbb{N}}$ exhibits a sharp threshold around $(T_n)_{n \in \mathbb{N}}$ if for every $\varepsilon\in(0,1)$ there exists  $n_0=n_0(\varepsilon)\in\mathbb{N}$ such that for every $n\gr n_0$,
\[
\varrho_1(\mu_n,\delta_n)\gr(1-\varepsilon)T_n \qquad\hbox{ and }\qquad \varrho_2(\mu_n,\delta_n)\ls(1+\varepsilon)T_n,
\]
for some sequence $(\delta_n)_{n \in \mathbb{N}}$ with $\lim_{n\to\infty}\delta_n=0$.
\end{definition}

The theorem below builds on existing methods for threshold phenomena (e.g. \cite{DFM,GaGia,Paf}), either by repairing points in the original arguments or by showing that additional classes of measures naturally fall within the scope of the existing theory.

\begin{theorem}
    Let $\mu$ be an even  Borel  probability measure, and let $$x^\ast=x^\ast(\mu)=\sup \{ x \in \mathbb{R}: \mu([x,+\infty))>0\} \in (0,+\infty]$$ be its right endpoint. For every $n \in \mathbb{N}$ we set $\mu_n:=\mu\otimes\cdots\otimes\mu$ ($n$-times). If 
    \begin{enumerate}
       \item[{\rm (i)}] $\mu$ is compactly supported and $\mu(\{x^\ast\})>0$, or
       \item[{\rm (ii)}] $\mu$ is unbounded, discrete log-concave with p.m.f $(p(k))_{k \in \mathbb{Z}}=(e^{-g(k)})_{k \in \mathbb{Z}}$ that satisfies\\ $g(k+1)/g(k) \to 1$, as $ k \to \infty$,
\end{enumerate}
then $(\mu_n)_{n \in \mathbb{N}}$ exhibits a weak threshold, around $({\mathbb E}_{\mu_n}(\Lambda_{\mu_n}^{\ast}))_{n \in \mathbb{N}}$.
\end{theorem}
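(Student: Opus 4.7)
The plan is to follow the Dyer--F\"uredi--McDiarmid paradigm, now with the repair announced in Part~(I) of the paper, and to combine it with a weak concentration estimate for $\Lambda_{\mu_n}^{\ast}$. Since $\mu$ is one-dimensional we have $\beta(\mu)<\infty$, hence $\Lambda_\mu^\ast\in L^2(\mu)$; set $T:=\E_\mu[\Lambda_\mu^\ast]$, so that $T_n=nT$ by the product decomposition of the Cram\'er transform noted in the excerpt, and apply Chebyshev to $\Lambda_{\mu_n}^\ast(X)=\sum_{i=1}^n\Lambda_\mu^\ast(X_i)$ to obtain
\[
\mu_n\bigl(|\Lambda_{\mu_n}^\ast - nT|>\eta\, nT\bigr)\longrightarrow 0\qquad \text{for every } \eta\in(0,1).
\]

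For the lower-threshold estimate $\varrho_1(\mu_n,\delta)\gr (1-\varepsilon)T_n$, I would use $\Prob(x\notin K_N)\gr (1-q_{\mu_n}(x))^N$ together with Bernoulli to conclude $\Prob(x\in K_N)\ls N q_{\mu_n}(x)\ls N e^{-\Lambda_{\mu_n}^\ast(x)}$. Integrating against $\mu_n$ and splitting at the level $r=(1-\varepsilon/2)T_n$, the ``good'' part contributes at most $N e^{-r}\ls e^{-(\varepsilon/2)T_n}$ whenever $N\ls e^{(1-\varepsilon)T_n}$, and the ``bad'' part is bounded by $\mu_n(\Lambda_{\mu_n}^\ast<r)\to 0$ by concentration.

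For the upper-threshold estimate $\varrho_2(\mu_n,\delta)\ls (1+\varepsilon)T_n$, the central analytic ingredient is the \emph{corrected} DFM supporting half-space estimate
\[
q_{\mu_n}(x)\gr c(\mu)\, e^{-r}\qquad\text{for every } x\in B_r(\mu_n),
\]
with $r=(1+\varepsilon/2)T_n$, up to sub-exponential-in-$n$ factors. Given this, an $e^{O(n)}$-net on $S^{n-1}$ together with a union bound over the net yields $\Prob(x\notin K_N)\ls e^{O(n)}\exp(-c(\mu)\,Ne^{-r})$, which is $o(1)$ whenever $N\gr e^{(1+\varepsilon)T_n}$ because $Ne^{-r}\gr e^{(\varepsilon/2)T_n}$ dominates $n$. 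Combining with $\mu_n(B_r^c)\to 0$ from the concentration step gives $\E_{\mu_n^N}[\mu_n(K_N)]\to 1$.

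The hard part is the supporting half-space estimate itself. For a fixed $\theta\in S^{n-1}$, $\mu_n(H^+_{x,\theta})=\Prob(Y_\theta\gr \langle \theta,x\rangle)$ with $Y_\theta=\sum_i \theta_i X_i$; since $\Lambda^\ast_{Y_\theta}(\langle \theta,x\rangle)\ls \Lambda^\ast_{\mu_n}(x)\ls r$, the claim reduces to a one-dimensional sharp Cram\'{e}r lower bound $\Prob(Y_\theta \gr t)\gtrsim e^{-\Lambda^\ast_{Y_\theta}(t)}$, uniform in $\theta$. In the smooth case this follows from an inverse Legendre step at the unique tilting parameter; in the genuinely atomic case the conjugate minimizer may be non-unique or land at a jump of $\Lambda_\mu'$, and the naive gradient selection fails --- this is the precise content of the DFM gap. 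Under hypothesis~(i), the endpoint atom $p_\ast:=\mu(\{x^\ast\})>0$ supplies a uniform floor: any supporting half-space at $x$ already contains the configurations in which enough samples land at $x^\ast$, and one recovers the $e^{-r}$ scaling by combining this with a coordinate-wise tilt. Under hypothesis~(ii), the regularity $g(k+1)/g(k)\to 1$ prevents the log-pmf from concentrating its rate on a single lattice step, and yields the matching estimate via careful analysis of the exponential tilts of $\mu$ near the boundary of its effective domain. Producing this uniform-in-$\theta$ lower bound in the atomic regime is the main technical obstacle; once it is in hand, the rest of the threshold argument is routine.
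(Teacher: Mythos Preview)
Your $\varrho_1$ argument and the Chebyshev concentration step are in line with the paper. The substantive divergence is in the $\varrho_2$ direction, and there is a genuine gap.

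First, the paper does not use an $\varepsilon$-net/union-bound on $S^{n-1}$ to control $\Prob(x\notin K_N)$; for atomic $\mu$ the half-space mass $\theta\mapsto \mu_n(H^+_{x,\theta})$ is discontinuous, so passing from a separating $\theta$ to a nearby net direction does not preserve the event ``all samples on one side'' without extra room that you have not secured. The paper instead invokes the DFM combinatorial lemma (Lemma~\ref{lem:DFM}\,(b)), whose $\binom{N}{n}$ factor comes from enumerating potential facets, and which only requires the single scalar $\inf_{x\in A}q_{\mu_n}(x)$. Second, and more importantly, your proposed route to the half-space lower bound --- reduce to a one-dimensional Cram\'er inequality $\Prob(Y_\theta\ge t)\gtrsim e^{-\Lambda^\ast_{Y_\theta}(t)}$ uniformly in $\theta\in S^{n-1}$ --- is not how the paper proceeds and is substantially harder (the law of $Y_\theta$ for generic $\theta$ is a messy atomic convolution). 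The paper's key step is purely geometric: translate a given half-space $H_1^+$ until its bounding hyperplane \emph{supports} $B_r(\mu_n)$ at some point $z$, and then analyze the normal cone of $B_r(\mu_n)$ at $z$. If $z\in(-x^\ast,x^\ast)^n$, the gradient argument goes through as in the smooth case. If some coordinates of $z$ equal $\pm x^\ast$ (say $z_k=\cdots=z_n=x^\ast$), Lemma~\ref{lem:hyper-bd} shows that every normal direction vanishes on the first $k-1$ coordinates and is nonnegative on the rest; consequently $H^+\supseteq\{y:y_k=\cdots=y_n=x^\ast\}$, whence $\mu_n(H^+)\ge (p^\ast)^{n-k+1}=\exp\big(-\sum_{i=k}^n\Lambda_\mu^\ast(x^\ast)\big)\ge e^{-r}$. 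This normal-cone characterization is exactly the ``missing justification'' advertised in the introduction, and it bypasses any uniform-in-$\theta$ one-dimensional estimate. For case~(ii) the paper does not analyze tilts directly either: it proves (Theorem~\ref{thm:Lambda-star-condition}) that $g(k+1)/g(k)\to 1$ is equivalent to the $\Lambda^\ast$-condition $-\log\mu([x,\infty))\sim\Lambda_\mu^\ast(x)$, after which the lower-threshold theorem follows from the existing argument in \cite{Paf}.
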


We further establish a sharp threshold using arguments inspired by the coupon-collector problem, and we locate the transition at a scale strictly smaller (in fact, polynomial) than the one in the known threshold results, which was  at least exponential.

\begin{theorem}\label{thm:lattice-ball-threshold}
Fix $r,p \geq 1$ and let $\mu_{n,r,p}$ be the uniform probability measure on $\mathbb{Z}^n \cap rB_p^n$. Then $(\mu_{n,r,p})_{n \in \mathbb{N}}$ exhibits a sharp threshold around some $(T_{n,r,p})_{n \in \mathbb{N}}$, with $T_{n,r,p} \sim \lfloor r^p \rfloor \log n$ as $n \to \infty$.
    \end{theorem}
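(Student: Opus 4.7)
Let $m := \lfloor r^p \rfloor \geq 1$ and $S_n := \mathbb{Z}^n \cap rB_p^n$, so that $\mu_{n,r,p}$ is uniform on $S_n$. The whole argument is driven by the rigidity of $S_n$: since each nonzero integer coordinate contributes at least $1$ to $\|v\|_p^p \leq r^p < m+1$, any $v \in S_n$ satisfies $\|v\|_0 \leq m$, and when $\|v\|_0 = m$ every nonzero entry must lie in $\{\pm 1\}$ (otherwise $\|v\|_p^p \geq 2^p + (m-1) \geq m+1$, using $p \geq 1$). Consequently
$$
|S_n| = 2^m \binom{n}{m}\bigl(1+O(1/n)\bigr) \sim \frac{(2n)^m}{m!}, \qquad n\to\infty,
$$
and a $\mu_n$-draw is of the \emph{dominant type} ($m$ entries in $\{\pm 1\}$, the rest zero) with probability $1-O(1/n)$. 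I will verify the sharp threshold at $T_n := m\log n = \lfloor r^p\rfloor\log n$.

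\emph{Upper threshold via coupon collector.} The extreme points of $\conv(S_n)$ lie in $S_n$, so their number $M_n \leq |S_n| = O(n^m)$. If $\{X_1,\dots,X_N\}$ contains every extreme point, then $K_N = \conv(S_n)\supseteq S_n$ and $\mu_n(K_N)=1$. A union bound gives
$$
\Prob(K_N \neq \conv(S_n)) \;\leq\; M_n(1-1/|S_n|)^N \;\leq\; M_n \exp(-N/|S_n|),
$$
which is super-polynomially small as soon as $N \geq \exp((1+\varepsilon)T_n) = n^{(1+\varepsilon)m}$. Hence $\E[\mu_n(K_N)] \geq 1-\delta_n^{(2)}$ for some $\delta_n^{(2)}\to 0$, giving $\varrho_2(\mu_n,\delta_n^{(2)}) \leq (1+\varepsilon)T_n$.

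\emph{Lower threshold via face-wise union bound.} Write $\E[\mu_n(K_N)] = \Prob(X_0 \in K_N)$ for an independent $X_0 \sim \mu_n$, and condition on $X_0 = x_0$ being of the dominant type; the atypical contribution is $O(1/n)$. Let $F_{x_0}$ be the smallest face of $\conv(S_n)$ containing $x_0$ and $A := \mathrm{supp}(x_0)$. Every convex representation of $x_0$ is supported on $F_{x_0}$, so $\{x_0 \in K_N\}$ is the union, over subsets $T \subseteq F_{x_0} \cap \mathbb{Z}^n$ with $x_0 \in \conv(T)$, of the events $\{T \subseteq \{X_1,\dots,X_N\}\}$. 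The structural heart of the argument is the support-containment lemma
$$
F_{x_0} \cap \mathbb{Z}^n \;\subseteq\; \{v \in \mathbb{Z}^n : \mathrm{supp}(v) \subseteq A\}, \qquad \text{so}\qquad |F_{x_0}\cap\mathbb{Z}^n| \leq (2\lfloor r\rfloor+1)^m =: L,
$$
proved by the same $\ell_p^p$-budget: the functional $\ell(v) := \sum_{i\in A}\mathrm{sign}(x_0)_i v_i$ supports $F_{x_0}$ at $\ell(x_0)=m$, and if $v \in S_n$ has some $v_j \neq 0$ with $j\notin A$ then $\sum_{i\in A}|v_i|^p \leq r^p - 1 < m$, so by power-mean $\ell(v) \leq m^{1-1/p}(m-1)^{1/p} < m$, which together with $\ell(v)\in\mathbb{Z}$ forces $\ell(v)\leq m-1$, contradicting $v\in F_{x_0}$. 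By negative correlation of the ``appears in $\{X_i\}$'' events, $\Prob(T\subseteq\{X_i\}) \leq (N/|S_n|)^{|T|}$, so summing over the at most $2^L$ valid subsets,
$$
\Prob(x_0 \in K_N) \;\leq\; \sum_T \bigl(N/|S_n|\bigr)^{|T|} \;\leq\; 2^L \cdot N/|S_n|,
$$
dominated by the singleton $T=\{x_0\}$ when $N \ll |S_n|$. Averaging and absorbing the $O(1/n)$ atypical mass yields $\E[\mu_n(K_N)] \lesssim N/|S_n| + O(1/n)$, which tends to $0$ for $N \leq \exp((1-\varepsilon)T_n) = n^{(1-\varepsilon)m}$. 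Thus $\varrho_1(\mu_n,\delta_n^{(1)}) \geq (1-\varepsilon)T_n$ with $\delta_n^{(1)}\to 0$.

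Taking $\delta_n := \max(\delta_n^{(1)},\delta_n^{(2)}) \to 0$ in Definition~\ref{def:sharp-threshold} yields the sharp threshold around $T_{n,r,p} \sim \lfloor r^p\rfloor\log n$. The main technical obstacle is the support-containment lemma for $F_{x_0}$: the geometry of $F_{x_0}$ itself changes qualitatively with $(p,r)$ — a single vertex when $p \geq \log_2 3$, an $(m-1)$-simplex with vertices $\pm re_i$ when $p=1$ and $r$ is an integer, and intermediate low-dimensional polytopes in between — yet in every case the $\ell_p^p$-budget $r^p < m+1$ together with integrality forces the lattice points of $F_{x_0}$ to share the support of $x_0$, delivering the uniform constant $L=L(m,r,p)$. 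All remaining steps (the enumeration of $|S_n|$, the coupon collector on extreme points, and the union bound over the $2^L$ competing lattice representations of $x_0$) are routine.
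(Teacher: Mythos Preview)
Your proposal is correct and follows essentially the same route as the paper. Both arguments hinge on the same structural observation: for a dominant point $x_0\in A_n=\{x\in\{-1,0,1\}^n:\|x\|_0=m\}$, the linear functional $\ell(v)=\langle x_0,v\rangle$ attains its maximum $m$ over $S_n$ on a set of lattice points of size bounded by a constant depending only on $(r,p)$ (the paper computes it exactly as $\binom{2m-1}{m}$, you bound it by $(2\lfloor r\rfloor+1)^m$), and the complement $S_n\setminus A_n$ has mass $O(1/n)$. The packaging differs slightly: the paper sandwiches $|K_N\cap L_n|$ between $D_N$ and $D_N+|G_N|+|L_n\setminus A_n|$ and bounds $\mathbb{E}|G_N|$ via the binomial event ``at least two samples land in $F_{x_0}$'', whereas you union--bound $\Prob(x_0\in K_N)$ directly over subsets $T\subseteq F_{x_0}\cap\mathbb{Z}^n$ using negative association of occupancy indicators. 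For $\varrho_2$ the paper uses $\mathbb{E}[D_N]/M_n$, while you use the event that all extreme points are sampled; both are coupon--collector bounds and give the same conclusion. One small slip: in your power--mean step the correct bound is $\ell(v)\le m^{1-1/p}(r^p-1)^{1/p}$ rather than $m^{1-1/p}(m-1)^{1/p}$ (since $r^p-1\ge m-1$ in general), but as $r^p-1<m$ the conclusion $\ell(v)<m$, hence $\ell(v)\le m-1$, is unaffected.
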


Finally, we construct a sequence of discrete log-concave product measures that do not exhibit any threshold, in sharp contrast to the continuous log-concave setting.

\begin{theorem}
There exists a sequence $(\lambda_n)_{n \in \mathbb{N}}$ of discrete log-concave probability measures on $\mathbb{R}$ such that if $\mu_n= \lambda_1 \otimes\cdots\otimes \lambda_n$ for every $n \in \mathbb{N}$, then  $(\mu_n)_{n \in \mathbb{N}}$ does not exhibit any threshold.
\end{theorem}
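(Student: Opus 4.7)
The plan is to give an explicit construction based on three-point symmetric discrete log-concave measures on $\{-1,0,1\}$, engineered so that the origin carries uniformly positive mass along the whole sequence while every fixed number of samples leaves a positive portion of $\mu_n$ unreachable. Set
\[
\lambda_n \;=\; \tfrac{\epsilon_n}{2}\delta_{-1} + (1-\epsilon_n)\delta_{0} + \tfrac{\epsilon_n}{2}\delta_{1}, \qquad \epsilon_n = \tfrac{1}{(n+1)^2}.
\]
Discrete log-concavity of each $\lambda_n$ reduces to $(1-\epsilon_n)^2\gr (\epsilon_n/2)^2$, which is immediate from $\epsilon_n\ls 1/4$. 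A telescoping identity gives
\[
\mu_n(\{0\})\;=\;\prod_{i=1}^n(1-\epsilon_i)\;=\;\prod_{i=1}^n\frac{i(i+2)}{(i+1)^2}\;=\;\frac{n+2}{2(n+1)}\;\gr\;\tfrac{1}{2},
\]
encoding the bulk mass at the origin that drives the argument.

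The first step extracts a uniform lower barrier on the capture. Since $K_1=\{X_1\}$, one has $\E_{\mu_n^N}[\mu_n(K_1)]=\|\mu_n\|_2^2\gr\mu_n(\{0\})^2\gr 1/4$ for every $n$. Consequently, for every $\delta<1/4$ the condition $\E[\mu_n(K_N)]\ls\delta$ fails at $N=1$ and hence at every $N\gr 1$: no $r>0$ witnesses the defining inequality of $\varrho_1(\mu_n,\delta)$, so the weak-threshold requirement $\varrho_1(\mu_n,\delta)\gr(1-\varepsilon)T_n$ with $T_n>0$ cannot hold for any $n$.

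The second step provides a uniform upper barrier. Writing $S(y):=\{i:y_i\neq 0\}$ and $T_N:=\bigcup_{k\ls N}S(X_k)$, convex combinations cannot create non-zero coordinates where every sample vanishes, so $x\in K_N$ forces $S(x)\subseteq T_N$. Hence $\mu_n(\{x\notin K_N\})\gr 1-\prod_{i\notin T_N}(1-\epsilon_i)$, and since the indicators $\mathds{1}_{i\notin T_N}=\prod_{k\ls N}\mathds{1}_{X_{k,i}=0}$ are independent in $i$, a one-line calculation yields
\[
\E\!\left[\prod_{i\notin T_N}(1-\epsilon_i)\right]\;=\;\prod_{i=1}^n\bigl[1-\epsilon_i(1-\epsilon_i)^N\bigr]\;\ls\;\exp\!\Bigl(-\sum_{i=1}^n\epsilon_i(1-\epsilon_i)^N\Bigr).
\]
For any fixed $N_0$ this bound is at most some absolute constant $<1$ uniformly in $n$, since the single term $\epsilon_1(1-\epsilon_1)^{N_0}>0$ already contributes. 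Therefore $\E[\mu_n(K_{N_0})]\ls 1-c'_{N_0}$ for some $c'_{N_0}>0$, yielding $\varrho_2(\mu_n,\delta)\gr\log(N_0+1)$ for every $\delta<c'_{N_0}$ and all $n$ large enough.

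Combining the two barriers with $N_0=1$ and any $\delta\in(0,\min\{1/4,c'_1\})$: for all large $n$, the inequalities $\varrho_1(\mu_n,\delta)\gr(1-\varepsilon)T_n$ and $\varrho_2(\mu_n,\delta)\ls(1+\varepsilon)T_n$ would simultaneously force $T_n\ls 0$ and $T_n\gr(\log 2)/(1+\varepsilon)>0$, a contradiction. Thus $(\mu_n)_{n\in\mathbb{N}}$ admits no weak, and \emph{a fortiori} no sharp, threshold. The main technical input is the support-containment observation combined with the coordinate-independence identity $\E[\prod_{i\notin T_N}(1-\epsilon_i)]=\prod_i[1-\epsilon_i(1-\epsilon_i)^N]$; once these are in hand, every remaining bound is elementary.
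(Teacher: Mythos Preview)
Your proof is correct and follows essentially the same strategy as the paper's: choose one-dimensional marginals that are small perturbations of a point mass with summable perturbation sizes, so that $\mu_n(\{0\})\ge c>0$ uniformly in $n$; then $\E_{\mu_n^N}[\mu_n(K_N)]\ge\E[\mu_n(K_1)]\ge\mu_n(\{0\})^2$ kills $\varrho_1(\mu_n,\delta)$ for small $\delta$, while a separate bound keeps $\varrho_2(\mu_n,\delta)$ away from~$0$. The paper uses Bernoulli marginals on $\{0,1\}$ with $p_k=1/(k\log^3(k+3))$ (chosen so as to also force $\beta(\mu_n)\to\infty$, which is not part of the present statement) and gets the second barrier more cheaply via $\E[\mu_n(K_1)]\le\max_x\mu_n(\{x\})<1$; your symmetric three-point marginals with $\epsilon_n=1/(n+1)^2$ and the support-containment identity $\E\bigl[\prod_{i\notin T_N}(1-\epsilon_i)\bigr]=\prod_i\bigl[1-\epsilon_i(1-\epsilon_i)^N\bigr]$ are a clean variant that yields the same conclusion.
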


\section{Discrete log-concave random vectors}

A recurring quantity in the threshold method is the mean (and more generally moments) of the convex
conjugate $\Lambda_X^\ast$ of the log-moment generating function $\Lambda_X$.
In particular, several threshold statements depend sensitively on whether $\E_X[\Lambda_X^\ast]$
is finite. Its finiteness is not automatic even in the continuous case, where it was verified first in \cite{BGP} for the uniform measure on a convex body and later for all log-concave measures with continuous density in \cite{Giannop-Tziotziou}.
For discrete models the question is naturally
linked to the possibility of passing from a discrete log-concave mass function on $\Z^n$
to a genuinely integrable log-concave function on $\R^n$.

\smallskip

This motivates the following more fundamental question:
\emph{given a discrete log-concave p.m.f $p$ on $\Z^n$ {\rm (}so that
$g=-\log p$ is convex-extensible in the sense of Murota in {\rm \cite{Mur})}, does its log-concave extension
$f=e^{-\tilde g}$ necessarily satisfy $\int_{\R^n} f<\infty$?}
If the answer is positive, then the analytic toolkit available for integrable log-concave functions
can be imported into the discrete setting, and in particular one obtains finiteness of all moments
of $\Lambda_X^\ast$ (see Theorem~\ref{thm:lambda-star-moments}).

\smallskip

The section is therefore organized around two parallel themes: the \emph{structural} notion of
discrete log-concavity (via convex-extensibility) and \emph{integrability criteria} for the
corresponding extension.
The proofs combine tools from convex analysis (biconjugation and recession cones),
elementary comparisons for log-concave functions, and lattice/Diophantine input used to rule out
the presence of ``too many'' lattice points inside large superlevel sets of $f$.
Concretely, we will use:
(i) separability of the convex extension in the product case (Lemma~\ref{lem:separability}),
(ii) boundedness of suitable superlevel sets via recession-cone arguments and the fact that a p.m.f
cannot charge infinitely many interior lattice points (Lemmas~\ref{lem:dioph}, \ref{lem:infinite-lattice}),
and (iii) in dimension two, a Kronecker-type density argument (Lemmas~\ref{lem:kronecker}, \ref{lem:R^2})
to obtain integrability without additional geometric assumptions on the support.
 
We begin with the standard definition of discrete log-concavity on $\mathbb{Z}$.

\begin{definition}[Discrete log-concavity on $\mathbb{Z}$]\label{def:one-dimension}
A sequence $(p_k)_{k\in\mathbb Z}$ of non-negative numbers is called \emph{discrete log-concave} if its support is a discrete interval and for all $k$ one has
\[ p_k^2\ge p_{k-1}p_{k+1}.\]
Equivalently,  the sequence of ratios $p_{k+1}/p_k$ is non-increasing on the support.

An integer valued random variable $Y$ is called discrete log-concave if its probability mass function (p.m.f) is a discrete log-concave sequence.
\end{definition}

In order to extend the notion of discrete log-concavity to $\mathbb{Z}^n$, we need the following definitions:

\begin{definition}[convex-extensible]\label{def:convex-extension}
    For some $g: \mathbb{Z}^n \to [0,+\infty]  $ which is not identically equal to $+\infty$ we define $\tilde{g}: \mathbb{R}^n \to [0,+\infty]$ by \begin{align*}
        \tilde{g}(x) &=\sup \{ a(x)| \ a:\mathbb{R}^n \to \mathbb{R} \text{  is affine and } a(k) \leq g(k) \text{ for every } k \in \mathbb{Z}^n \} 
        \\
        &=\sup \{ c(x)| \ c: \mathbb{R}^n \to \mathbb{R} \cup \{+\infty\} \text{ is convex, lsc and } c(k)\leq g(k) \text{ for every } k \in \mathbb{Z}^n \}, \ x \in \mathbb{R}^n.
    \end{align*}
    We say that $g$ is \textit{convex-extensible} if $\tilde{g}(k)=g(k)$ for every $k \in \mathbb{Z}^n$.
\end{definition}

Notice that for every such $g: \mathbb{Z}^n \to [0,+\infty]$, $\tilde{g}$ is convex, lower-semicontinuous and non-negative, since $a\equiv 0$ is affine and satisfies $a(k) \leq g(k)$ for every $k \in \mathbb{Z}^n$. Moreover, if ${\rm dom}(g)=\{k : g(k) < +\infty\}$ and ${\rm dom}(\tilde{g})=\{x:\tilde g(x) < +\infty\}$, then $$\conv({\rm dom}(g)) \subseteq {\rm dom}(\tilde{g}) \subseteq \overline{\conv}({\rm dom}(g)).$$ We will also need the following lemma.

\begin{lemma}\label{lem:separability}
    Let $g_1,\ldots,g_n : \mathbb{Z} \to [0,+\infty]$ which are not everywhere infinite and define $g: \mathbb{Z}^n \to [0,+\infty]$ by $g(k_1,\ldots,k_n)= \sum\limits_{i=1}^{n}g_i(k_i)$ for every $k \in \mathbb{Z}^n$. Then we have that $\tilde{g}(x_1,\ldots,x_n)=\sum\limits_{i=1}^{n}\tilde{g_i}(x_i)$ for every $x \in \mathbb{R}^n$.
\end{lemma}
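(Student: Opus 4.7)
The plan is to establish the two inequalities $\sum_i \tilde{g_i}(x_i)\le \tilde g(x)\le \sum_i \tilde{g_i}(x_i)$ and conclude equality. Set $h(x):=\sum_{i=1}^n \tilde{g_i}(x_i)$.

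For the first inequality I would check that $h$ is admissible in the ``convex--lsc'' characterization of $\tilde g$ from Definition~\ref{def:convex-extension}. Convexity and nonnegativity of $h$ follow from the corresponding properties of each $\tilde{g_i}$; lower semicontinuity of the sum is immediate because each $\tilde{g_i}$ is lsc on $\mathbb{R}$ and $\liminf(\sum a_i^{(k)})\ge \sum\liminf a_i^{(k)}$ for nonnegative extended-real summands. Since $\tilde{g_i}(k)\le g_i(k)$ for every $k\in\mathbb{Z}$, one obtains $h(k)\le g(k)$ on $\mathbb{Z}^n$, and the definition of $\tilde g$ as the supremum of such functions gives $h\le \tilde g$.

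For the reverse inequality I would invoke the first (affine) characterization of $\tilde g$ and prove that every affine $a(x)=\langle v,x\rangle+c$ with $a\le g$ on $\mathbb{Z}^n$ satisfies $a\le h$. The idea is to redistribute the intercept $c$ across coordinates via
\[
c_i\;:=\;\inf_{k\in\mathbb{Z}}\bigl(g_i(k)-v_i k\bigr),\qquad i=1,\dots,n.
\]
Each $c_i$ is finite: since $g_i$ is not identically $+\infty$, $c_i<+\infty$; and for the lower bound I would fix, for $j\ne i$, reference values $k_j^0$ with $g_j(k_j^0)<\infty$ and apply $a\le g$ at points of the form $(k_1^0,\dots,k_{i-1}^0,k,k_{i+1}^0,\dots,k_n^0)\in\mathbb{Z}^n$ to produce a uniform lower bound for $g_i(k)-v_i k$ that is independent of $k$. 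By construction $v_i t+c_i\le g_i(t)$ for every $t\in\mathbb{Z}$, so the affine map $t\mapsto v_i t+c_i$ lies below $\tilde{g_i}$ on $\mathbb{R}$.

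To finish I would use the product structure of $\mathbb{Z}^n$ to separate the global infimum:
\[
\sum_{i=1}^n c_i\;=\;\inf_{k\in\mathbb{Z}^n}\sum_{i=1}^n\bigl(g_i(k_i)-v_i k_i\bigr)\;=\;\inf_{k\in\mathbb{Z}^n}\bigl(g(k)-\langle v,k\rangle\bigr)\;\ge\; c,
\]
the last inequality being precisely $a\le g$ on $\mathbb{Z}^n$. Then
\[
a(x)=\langle v,x\rangle+c\;\le\;\sum_{i=1}^n(v_i x_i+c_i)\;\le\;\sum_{i=1}^n \tilde{g_i}(x_i)=h(x),
\]
and taking the supremum over all admissible affine $a$ yields $\tilde g\le h$. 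The only delicate point is the finiteness of the intercepts $c_i$, which is where the product structure of $\mathbb{Z}^n$ and the assumption that no $g_i$ is identically $+\infty$ are jointly used; the remaining steps are routine manipulations of conjugates.
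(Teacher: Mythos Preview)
Your proof is correct. The paper's argument is considerably shorter: it extends each $g_i$ to $\hat g_i:\mathbb{R}\to[0,+\infty]$ by setting $\hat g_i=+\infty$ off $\mathbb{Z}$, observes that $\tilde g=(\hat g)^{**}$ and $\tilde g_i=(\hat g_i)^{**}$, and then uses that the Fenchel conjugate of a separable sum is the sum of the conjugates (hence the same holds for biconjugates). Your two-inequality argument is really an explicit unpacking of this machinery: your $c_i=\inf_{k\in\mathbb{Z}}(g_i(k)-v_ik)$ is exactly $-(\hat g_i)^*(v_i)$, and the identity $\sum_i c_i=\inf_{k\in\mathbb{Z}^n}(g(k)-\langle v,k\rangle)$ is precisely the separability of the first conjugate. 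Your route is more self-contained---it does not presuppose the biconjugate theorem or familiarity with Fenchel duality---while the paper's version is essentially a one-liner for readers fluent in convex analysis.
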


\begin{proof}
    For every $1 \leq i \leq n$ we set $\hat{g_i}: \mathbb{R} \to [0,+\infty]$ with  $$ \hat{g_i}(x) =
    \begin{cases}
    g_i(x) & \text{, } x \in \mathbb{Z} \\
    +\infty & \text{, } x \in \mathbb{R} \setminus \mathbb{Z}.
    \end{cases}$$ We also set $\hat{g}: \mathbb{R}^n \to [0,+\infty], \ \hat{g}(x_1,\ldots,x_n)=\sum\limits_{i=1}^n \hat{g_i}(x_i)$ for every $x \in \mathbb{R}^n$. Notice now that $$\tilde{g}=(\hat{g})^{\ast \ast}=\sum\limits_{i=1}^{n} (\hat{g_i})^{\ast \ast}=\sum\limits_{i=1}^{n}\tilde{g_i}.$$  This concludes the proof. 
\end{proof}

Now we can give the definition of discrete log-concavity on $\mathbb{Z}^n$.

\begin{definition}[Discrete log-concave random vectors]\label{def:discrete-log-vectors} An integer valued random vector $X$ with p.m.f $p: \mathbb{Z}^n \to [0,1)$ is called  \textit{discrete log-concave}, if ${\rm supp}(p)=\{k \in \mathbb{Z}^n: p(k)>0\} $ is full-dimensional and $g=- \log p$ is convex-extensible. We denote by $f=e^{-\tilde{g}}$ the log-concave extension of $p$ on $\mathbb{R}^n$.

\end{definition}

Note that $f$ is upper-semicontinuous and if $A=\{f>0\}$, then $f$ is continuous on ${\rm int}(A)$ and $$\conv({\rm supp}(p)) \subseteq A \subseteq \overline{\conv}({\rm supp}(p)).$$

\begin{proposition}\label{prop:equivalence} In the case $n=1$, i.e.\ on $\mathbb{Z}$, the two definitions are equivalent.
    
\end{proposition}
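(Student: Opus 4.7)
The proposition asserts that, for $n=1$, the classical discrete-interval/log-ratio definition (Definition~\ref{def:one-dimension}) coincides with the convex-extensibility formulation with full-dimensional support (Definition~\ref{def:discrete-log-vectors}). My plan is to verify both implications by producing an explicit piecewise-affine candidate for the convex envelope $\tilde{g}$ on $\mathbb{R}$ and matching it against the variational characterization of $\tilde{g}$ given in Definition~\ref{def:convex-extension}.

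For the forward implication, I would take the support $I\subseteq\mathbb{Z}$ (a discrete interval) and define $h:\mathbb{R}\to[0,+\infty]$ by piecewise-affine interpolation of $g=-\log p$ between consecutive points of $I$, set to $+\infty$ off $\conv(I)$. The hypothesis $p_k^2\ge p_{k-1}p_{k+1}$ reads $2g(k)\le g(k-1)+g(k+1)$, which makes the successive slopes of $h$ non-decreasing, hence $h$ is convex; it is also lsc since it is continuous on $\conv(I)$ and jumps to $+\infty$ at any finite endpoint. Since $h$ is a convex lsc minorant of $g$ on $\mathbb{Z}$ (in fact with equality), the second characterization in Definition~\ref{def:convex-extension} gives $h\le\tilde{g}$. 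Conversely, any convex lsc $c$ with $c(k)\le g(k)$ on $\mathbb{Z}$ lies below the corresponding secant on every sub-interval $[k,k+1]\subseteq\conv(I)$, and the bound $c\le h$ is trivial off $\conv(I)$; hence $\tilde{g}\le h$. Thus $\tilde{g}=h$ and in particular $\tilde{g}(k)=g(k)$ for all $k\in\mathbb{Z}$, i.e.\ $g$ is convex-extensible; full-dimensionality of the support is automatic in the non-degenerate case where $I$ contains at least two points.

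For the reverse implication, assume $g$ is convex-extensible with full-dimensional support. I would first argue the support is a discrete interval: if $k_1<k<k_2$ with $k_1,k_2\in\mathrm{supp}(p)$, then by convexity of $\tilde{g}$ and convex-extensibility $g(k)=\tilde{g}(k)\le\max(\tilde{g}(k_1),\tilde{g}(k_2))<+\infty$, so $k\in\mathrm{supp}(p)$. To recover the log-ratio inequality, apply midpoint convexity of $\tilde{g}$ at each interior integer: $\tilde{g}(k)\le\tfrac12(\tilde{g}(k-1)+\tilde{g}(k+1))$, which by convex-extensibility becomes $2g(k)\le g(k-1)+g(k+1)$, equivalently $p_k^2\ge p_{k-1}p_{k+1}$; at boundary points of the support the inequality is automatic because one of $p(k\pm1)$ vanishes.

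I do not expect a real obstacle: the argument is one-dimensional convex-analytic bookkeeping. The only delicate point is identifying the explicit interpolant $h$ with the biconjugate $\tilde{g}$, and this is streamlined in one variable because any convex minorant of $g$ on $\mathbb{Z}$ is determined above $\conv(I)$ by the two-point secant bounds between consecutive integers of the support; in higher dimension this step would require significantly more care (triangulations, facial structure of $\conv({\rm dom}(g))$), but on $\mathbb{Z}$ it is essentially free.
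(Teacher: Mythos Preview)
Your proposal is correct and follows essentially the same route as the paper: construct the piecewise-affine interpolant $h$ of $g$, verify it is convex and lsc, and show $h=\tilde g$, which yields convex-extensibility; for the converse, read off the interval property of the support and the midpoint inequality from convexity of $\tilde g$. The only cosmetic difference is that you bound $\tilde g\le h$ via the second (convex-lsc-minorant) description of $\tilde g$ and secant bounds, whereas the paper uses the first (affine-minorant) description; your reverse implication is also slightly more explicit about the discrete-interval and full-dimensionality checks than the paper's one-line remark.
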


\begin{proof}
    Let $X$ be an integer valued random variable with p.m.f $p: \mathbb{Z} \to [0,1).$
    
    If $p$ admits a log-concave extension $f$ on $\mathbb{R}$, then it is clear that its support is a discrete interval and $$p(k)^2=f(k)^2 \geq f(k-1)f(k+1)=p(k-1)p(k+1)$$ for every $k \in \mathbb{Z}$, due to the log-concavity of $f$.
    
    Assume now that $p=e^{-g}$ is a log-concave sequence in the sense of Definition~\ref{def:one-dimension}. We will show that $g$ is convex-extensible and $$h: \mathbb{R} \to [0,+\infty], \quad h(x)\coloneqq g(k)+(x-k)\left(g(k+1)-g(k)\right) , \, x \in [k,k+1]$$ is the convex extension of $g$. Indeed, $h(k)=g(k)$ for every $k \in \mathbb{Z}$, and it is obvious that $h$ is lower-semicontinuous and convex as it is piecewise affine and its slopes $m_k \coloneqq h(k+1)-h(k)=g(k+1)-g(k)$ are non-decreasing, since $p=e^{-g}$ is a log-concave sequence. So, $h \leq \tilde{g}$, which implies that $\tilde{g}(k)=g(k)$ for every $k \in \mathbb{Z}$. Thus, $g$ is convex-extensible. Finally, let $a(x)=bx+d$ be an affine function with $a(k) \leq g(k)$ for every $k \in \mathbb{Z}$. If $x \in [k,k+1]$ for some $k \in \mathbb{Z}$, then 
    \begin{align*}   
    a(x)&=a(\lambda k+(1-\lambda)(k+1))=\lambda a(k)+(1-\lambda)a(k+1) \leq \lambda g(k) +(1-\lambda)g(k+1)\\
    &= \lambda h(k) +(1-\lambda)h(k+1)=h(\lambda k +(1-\lambda)(k+1))=h(x),
    \end{align*}
    for some $\lambda \in [0,1]$, since $h$ is affine on $[k,k+1]$. Taking supremum over $a$ yields the result.
\end{proof}

Next we discuss the integrability of the log-concave extension $f$. We begin with some almost trivial cases. A proof of the next Proposition can also be found in \cite[Proposition 5.1]{BMaMe}.

\begin{proposition}\label{prop:one-dim-integr}
    Let $X$ be a discrete log-concave random variable in $\mathbb{Z}$ with p.m.f $p : \mathbb{Z} \to [0,1)$ and $f$ be its log-concave extension. Then $f$ is integrable.
\end{proposition}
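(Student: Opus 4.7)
The plan is to exploit the explicit piecewise-affine description of $\tilde g$ recorded inside the proof of Proposition~\ref{prop:equivalence}, namely $\tilde g(x)=g(k)+(x-k)\bigl(g(k+1)-g(k)\bigr)$ on each interval $[k,k+1]$ with $g(k),g(k+1)<+\infty$, and to compute $\int_{\mathbb{R}} f$ by summing contributions over unit intervals. Outside $\conv(\mathrm{supp}(p))$ the extension $\tilde g$ equals $+\infty$, so $f$ vanishes there and those intervals contribute $0$; the intervals straddling an endpoint of $\mathrm{supp}(p)$ also contribute $0$, since $f\equiv 0$ on a set of full measure inside them.

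On each interval $[k,k+1]$ with both $p(k),p(k+1)>0$, the function $f(x)=e^{-\tilde g(x)}$ is a pure exponential and a direct calculation yields
\[
\int_k^{k+1} f(x)\,dx \;=\; \frac{p(k)-p(k+1)}{\log p(k)-\log p(k+1)} \;=:\; L\bigl(p(k),p(k+1)\bigr),
\]
the logarithmic mean of $p(k)$ and $p(k+1)$, with the convention $L(a,a)=a$ when $p(k)=p(k+1)$ obtained by passing to the limit in the formula. This is the only calculation needed; everything else is bookkeeping.

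I would then invoke the classical comparison between logarithmic and arithmetic means, $L(a,b)\le(a+b)/2$ for $a,b>0$, which is equivalent to the elementary inequality $2(t-1)\le(t+1)\log t$ for $t\ge 1$. Summing over $k\in\mathbb{Z}$ and regrouping the terms,
\[
\int_{\mathbb{R}} f \;\le\; \sum_{k\in\mathbb{Z}} \frac{p(k)+p(k+1)}{2} \;=\; \sum_{k\in\mathbb{Z}} p(k) \;=\; 1,
\]
which gives the integrability of $f$ (indeed $\int f \le 1$). There is no real obstacle here beyond verifying the log-mean formula and handling the degenerate case $p(k)=p(k+1)$ by continuity and the boundary intervals separately; the argument is essentially one line once Proposition~\ref{prop:equivalence} is in hand.
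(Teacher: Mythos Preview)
Your argument is correct. Both your proof and the paper's rest on the piecewise-affine description of $\tilde g$ from Proposition~\ref{prop:equivalence} and on splitting $\int_{\mathbb{R}} f$ into unit intervals $[k,k+1]$, but the way each unit integral is controlled differs. The paper observes that $f$, being log-concave (hence unimodal), attains its maximum at some integer $k_0$, so $\int_k^{k+1} f \le p(k+1)$ for $k<k_0$ and $\int_k^{k+1} f \le p(k)$ for $k\ge k_0$, yielding $\int_{\mathbb{R}} f \le 2$. You instead evaluate $\int_k^{k+1} f$ exactly as the logarithmic mean $L(p(k),p(k+1))$ and then invoke the arithmetic--logarithmic mean inequality to obtain the sharper bound $\int_{\mathbb{R}} f \le 1$. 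Your route is a bit more computational and requires the AM--LM inequality as an extra input, while the paper's argument uses only monotonicity; on the other hand, your version delivers a better constant and makes the contribution of each interval completely explicit. The handling of boundary intervals and the degenerate case $p(k)=p(k+1)$ is fine.
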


\begin{proof}
We know that $f$ is unimodal, as a log-concave function. Since $p$ is a p.m.f we can conclude that there exists $x_0 \in \mathbb{R}$ such that $f$ is non-decreasing on $(-\infty,x_0]$ and non-increasing on $[x_0,+\infty)$. By the formula of $f$ given in the proof of Proposition~\ref{prop:equivalence}, we can assume that there exists $k_0 \in \mathbb{Z}$ such that $\max f=f(k_0)$. Then, $$\int_{\mathbb{R}} f(x) \, dx= \sum\limits_{k=-\infty}^{+\infty} \int_{k}^{k+1} f(x) \, dx \leq \sum\limits_{k=-\infty}^{k_0-1} p(k+1) + \sum\limits_{k=k_0}^{+\infty}p(k) \leq 2 <+\infty.$$ 
\end{proof}

\begin{proposition}
    Let $X$ be a random vector in $\mathbb{Z}^n$ with independent and discrete log-concave coordinates. Then, $X$ is a discrete log-concave random vector in $\mathbb{Z}^n$ and the log-concave extension of its p.m.f is integrable.
\end{proposition}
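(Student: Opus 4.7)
The plan is to assemble the tools already developed in the section: use the one-dimensional equivalence to obtain convex-extensibility coordinatewise, use the separability lemma to factor the convex extension, and then invoke one-dimensional integrability together with Fubini/Tonelli for the product extension.

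To begin, I would write $X=(X_1,\ldots,X_n)$ with independent one-dimensional discrete log-concave coordinates of p.m.f $p_i=e^{-g_i}$. Independence yields $p(k)=\prod_{i=1}^n p_i(k_i)$, hence $g(k):=-\log p(k)=\sum_{i=1}^n g_i(k_i)$ for $k\in\mathbb Z^n$. By Proposition~\ref{prop:equivalence}, each $g_i$ is convex-extensible on $\mathbb Z$ with extension $\tilde g_i$. Applying Lemma~\ref{lem:separability} to this family gives $\tilde g(x_1,\ldots,x_n)=\sum_{i=1}^n \tilde g_i(x_i)$; in particular $\tilde g(k)=\sum_i g_i(k_i)=g(k)$ for every $k\in\mathbb Z^n$, so $g$ itself is convex-extensible.

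Next I would check full-dimensionality of $\mathrm{supp}(p)$, which is the only point requiring a small observation rather than a direct citation. Under the standing non-degeneracy convention (since a coordinate concentrated at a single atom would make the joint law itself degenerate), each $\mathrm{supp}(p_i)$ is a discrete interval containing at least two points. Hence $\mathrm{supp}(p)=\prod_{i=1}^n\mathrm{supp}(p_i)$ contains $n+1$ affinely independent lattice points and is therefore full-dimensional. Combined with convex-extensibility of $g$, this confirms that $X$ is a discrete log-concave random vector in the sense of Definition~\ref{def:discrete-log-vectors}.

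For integrability, the factoring $\tilde g(x)=\sum_i \tilde g_i(x_i)$ yields $f(x)=e^{-\tilde g(x)}=\prod_{i=1}^n f_i(x_i)$, where $f_i=e^{-\tilde g_i}$ is the one-dimensional log-concave extension of $p_i$. Proposition~\ref{prop:one-dim-integr} gives $\int_{\mathbb R} f_i<\infty$ for every $i$, and Tonelli's theorem then yields $\int_{\mathbb R^n} f=\prod_{i=1}^n \int_{\mathbb R} f_i<\infty$. I do not expect a real obstacle here: the entire technical content is absorbed by Proposition~\ref{prop:equivalence}, Lemma~\ref{lem:separability}, and Proposition~\ref{prop:one-dim-integr}, and the only point that needs a moment's care is verifying that the product support is genuinely full-dimensional rather than lying in a coordinate hyperplane.
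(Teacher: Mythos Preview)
Your proposal is correct and follows essentially the same route as the paper: both use Lemma~\ref{lem:separability} to factor the convex extension as $\tilde g(x)=\sum_i\tilde g_i(x_i)$, deduce $f=\prod_i f_i$, and then invoke Proposition~\ref{prop:one-dim-integr} for each factor. You add a useful explicit check of full-dimensionality of $\mathrm{supp}(p)$ and a citation of Proposition~\ref{prop:equivalence} for the coordinatewise convex-extensibility, both of which the paper leaves implicit.
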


\begin{proof}
    Let $X=(X_1,\ldots,X_n)$ with p.m.f $p :\mathbb{Z}^n \to [0,1)$. Let also $p_i : \mathbb{Z} \to [0,1)$ be the p.m.f of the $i-$coordinate, and $f_i$ be its log-concave extension on $\mathbb{R}$ for every $1 \leq i \leq n$. Proposition~\ref{prop:one-dim-integr} shows that $f_i$ is integrable for every $1 \leq i \leq n$. By Lemma~\ref{lem:separability} we easily deduce that $-\log p=-\sum\limits_{i=1}^n \log p_i$ is convex-extensible and if $f$ is the log-concave extension of $p$, then $$f(x_1,\ldots,x_n)= \prod_{i=1}^{n} f_i(x_i)$$ for every $x \in \mathbb{R}^n$. The integrability of $f$ follows.
\end{proof}

\begin{proposition}\label{prop:restiction-log-concave}
    Let $F: \mathbb{R}^n \to [0,+\infty)$ be log-concave, upper-semicontinuous and integrable, with ${\rm supp}(F) \cap \mathbb{Z}^n$ full-dimensional. Define $p: \mathbb{Z}^n \to [0,1), \, p(k)=\frac{F(k)}{\sum_{\ell \in \mathbb{Z}^n}F(\ell)}$ for every $k \in \mathbb{Z}^n$. Then $p$ is a discrete log-concave p.m.f and its log-concave extension $f$ is integrable. 
\end{proposition}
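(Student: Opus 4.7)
The plan is to exhibit an explicit convex lower-semicontinuous extension of $g := -\log p$ and use it both to verify convex-extensibility and to dominate $f$ pointwise by a multiple of $F$, from which integrability of $f$ will follow immediately.

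First I would verify that $p$ is a genuine p.m.f.\ and that it falls under Definition~\ref{def:discrete-log-vectors}. Since $F$ is integrable and log-concave, the exponential decay bound $F(x) \leq Ae^{-B\|x\|_2}$ (recalled in the preliminaries) yields $Z := \sum_{\ell \in \Z^n} F(\ell) < +\infty$, so $p(k) = F(k)/Z$ is a probability mass function. Its support is $\{F>0\}\cap\Z^n$, full-dimensional by hypothesis.

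Next I would introduce the candidate extension
\[
G(x) := -\log F(x) + \log Z, \qquad x \in \R^n,
\]
with the convention $G(x)=+\infty$ when $F(x)=0$. Log-concavity of $F$ gives convexity of $G$, while upper semicontinuity of $F$ (closedness of $\{F \geq \alpha\}$) translates into closedness of the sublevel sets $\{G \leq \beta\}=\{F \geq Z e^{-\beta}\}$, so $G$ is lsc. By construction $G(k)=g(k)$ for every $k\in\Z^n$ (when $F(k)=0$ both sides equal $+\infty$). Thus $G$ is admissible in the variational supremum defining $\tilde g$, producing the one-sided bound $G(x)\leq\tilde g(x)$ on all of $\R^n$. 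Evaluating at integer points and combining with the trivial upper bound $\tilde g(k)\leq g(k)$ gives $\tilde g(k)=g(k)$; hence $g$ is convex-extensible and $p$ is discrete log-concave.

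Finally, to obtain integrability of $f=e^{-\tilde g}$ I would exponentiate the inequality $G \leq \tilde g$ established above:
\[
f(x) = e^{-\tilde g(x)} \leq e^{-G(x)} = \frac{F(x)}{Z},
\]
and integrate to get $\int_{\R^n} f\,dx \leq 1 < +\infty$. The only subtle point, rather than a real obstacle, is the temptation to try to identify $\tilde g$ with $G$ off the lattice; I would resist this, since $\tilde g$ need not equal the natural interpolant $G$ away from $\Z^n$ (the biconjugate could be strictly smaller there), but the one-sided comparison $G \leq \tilde g$ is all that is needed to transfer integrability from $F$ to $f$.
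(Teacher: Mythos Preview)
Your argument is correct and is essentially the paper's own proof: define $G=-\log(F/Z)$, observe it is convex, lsc, and agrees with $g$ on $\Z^n$, deduce $G\le\tilde g$ (hence convex-extensibility), and exponentiate to bound $f$ by $F/Z$. One small slip in your closing remark: having shown $G\le\tilde g$, the biconjugate $\tilde g$ could be strictly \emph{larger} than $G$ off the lattice, not smaller; but as you note, only the one-sided inequality is needed.
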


\begin{proof} Since $F$ is integrable and log-concave, there exist $A,B>0$ such that $F(x) \leq Ae^{-B\|x\|_2}$. Therefore, $\sum_{\ell \in \mathbb{Z}^n}F(\ell)<+\infty$ and $p$ is indeed a p.m.f.

Now we can write $p=e^{-g}$ and  $\frac{F}{\sum_{\ell \in \mathbb{Z}^n}F(\ell)}=e^{-G}$ where $G:\mathbb{R}^n \to \mathbb{R} \cup \{+\infty\}$ is a convex, and lower-semicontinuous function. Notice that  $G(k)=g(k)$ for every $k \in \mathbb{Z}^n$. This implies that $g$ is convex-extensible and $G \leq \tilde{g}$. Thus, $$f=e^{-\tilde{g}} \leq e^{-G}=\frac{F}{\sum_{\ell \in \mathbb{Z}^n}F(\ell)}$$ and the integrability of $f$ follows from the integrability of $F$.
\end{proof}

\begin{remark}
    If $K$ is a convex body in $\mathbb{R}^n$ and $\mathbb{Z}^n \cap K$ is full-dimensional, then by Proposition~\ref{prop:restiction-log-concave} the uniform distribution on $\mathbb{Z}^n \cap K$ is a discrete log-concave distribution in the sense of Definition~\ref{def:discrete-log-vectors}.
\end{remark}

We proceed now to more general cases. We first need some lemmas. The following lemma is a direct consequence of \cite[Theorem VII]{Cassels} :

\begin{lemma}[infinite approximation]\label{lem:dioph}
    For every $d_1,\ldots,d_n \in \mathbb{R}$ and every $\varepsilon>0$ there exist infinitely many $q,p_1,\ldots,p_n \in \mathbb{Z}$ such that $|qd_i-p_i| < \varepsilon \ $ for every $i=1,\ldots,n$.
\end{lemma}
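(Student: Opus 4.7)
The lemma strengthens the classical simultaneous Dirichlet theorem (which produces one approximating integer $q$ for each given approximation quality $1/Q$) to the statement that \emph{infinitely many} such integers exist for any fixed quality $\varepsilon$. My plan is to start from the finite Dirichlet bound
\[
\forall Q \in \mathbb{N},\ \exists\, q \in \{1,\dots,Q^n\},\ p_1,\dots,p_n \in \mathbb{Z} :\ \max_i |q d_i - p_i| \le 1/Q,
\]
which is the content of the cited Cassels Theorem VII, and then convert the qualifier ``one solution per $Q$'' into ``infinitely many solutions for a fixed $\varepsilon$'' by a short case analysis.

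First I would fix $\varepsilon>0$ and pick an arbitrary sequence of integers $Q_k \to \infty$ with $1/Q_k < \varepsilon$. Applying the Dirichlet bound for each $Q_k$ yields integers $q_k \in \{1,\dots,Q_k^{\,n}\}$ and $p_{k,1},\dots,p_{k,n} \in \mathbb{Z}$ with $\max_i |q_k d_i - p_{k,i}| \le 1/Q_k < \varepsilon$. The remaining question is whether the sequence $(q_k)_k$ can be exploited to produce \emph{infinitely many distinct} approximants.

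I would then split into two cases. If $(q_k)$ is unbounded, one immediately passes to a subsequence along which the $q_k$'s are distinct, and the corresponding tuples $(q_k,p_{k,1},\dots,p_{k,n})$ already witness the conclusion. If instead $(q_k)$ is bounded, then some value $q^\ast \ge 1$ is attained along an infinite subsequence; for that subsequence one has $|q^\ast d_i - p_{k,i}| \le 1/Q_k \to 0$, so each integer sequence $(p_{k,i})_k$ must stabilize at some $p_i^\ast \in \mathbb{Z}$ with $q^\ast d_i = p_i^\ast$. Consequently every $d_i$ is rational and, for every $m \in \mathbb{Z}$, the tuple $(m q^\ast, m p_1^\ast, \dots, m p_n^\ast)$ satisfies $|m q^\ast d_i - m p_i^\ast|=0 < \varepsilon$, which again produces infinitely many approximants.

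The main (and essentially only) obstacle is the bounded case: a priori it might seem to limit the construction to a single denominator $q^\ast$, but precisely in that case all $d_i$ become commensurable integer multiples of $1/q^\ast$, and the infinitude is then automatic. Beyond this elementary dichotomy, no further work is needed; the pigeonhole mechanism is entirely absorbed into the Cassels theorem that is already cited.
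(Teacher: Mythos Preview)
Your argument is correct and aligns with the paper, which does not give a proof but simply records the lemma as ``a direct consequence of \cite[Theorem~VII]{Cassels}''; you have written out exactly the standard way to turn the Dirichlet/Cassels bound into the ``infinitely many'' statement via the bounded/unbounded dichotomy on the denominators $q_k$. There is nothing to add or correct.
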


\begin{lemma}\label{lem:infinite-lattice}
    Let $K \subseteq \mathbb{R}^n$ be closed, convex, unbounded with non-empty interior. If ${\rm int}(K) \cap \mathbb{Z}^n \neq \emptyset$, then it is an infinite set.
\end{lemma}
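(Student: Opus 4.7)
\smallskip

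The plan is to combine a recession-cone argument with the Diophantine approximation lemma just stated. Fix $x_0\in{\rm int}(K)\cap\mathbb{Z}^n$ and pick $r>0$ with the open ball $B(x_0,r)\subseteq{\rm int}(K)$. Since $K$ is closed, convex and unbounded, its recession cone ${\rm rec}(K)=\{v\in\mathbb R^n:x_0+tv\in K\ \text{for all }t\gr 0\}$ is non-trivial (this is the standard fact that a closed convex unbounded set contains a ray from any of its points). Pick any $v=(v_1,\dots,v_n)\in{\rm rec}(K)\setminus\{0\}$. By convexity, for each $u\in B(0,r)$ we have $x_0+u\in K$ and hence $(x_0+u)+tv\in K$ for every $t\gr 0$, which yields the open inclusion
\[
B(x_0+tv,\,r)\ \subseteq\ {\rm int}(K)\qquad\text{for every }t\gr 0.
\]

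Next I would generate lattice points inside these translated balls via Lemma~\ref{lem:dioph}. Applied to $d_i=v_i$ and $\varepsilon=r/(2\sqrt n)$, it furnishes infinitely many integer tuples $(q,p_1,\dots,p_n)$ with $|qv_i-p_i|<\varepsilon$ for all $i$; setting $p=(p_1,\dots,p_n)\in\mathbb{Z}^n$ this gives $\|qv-p\|_2<r/2$. Using the symmetry $(q,p)\mapsto(-q,-p)$, which preserves the coordinatewise inequality, I may pass to an infinite subfamily with $q>0$. For any such pair, $x_0+p\in\mathbb{Z}^n$ and
\[
\|(x_0+p)-(x_0+qv)\|_2\ =\ \|p-qv\|_2\ <\ r/2,
\]
so $x_0+p\in B(x_0+qv,r)\subseteq{\rm int}(K)$.

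Finally, I would verify that the resulting lattice points are genuinely infinite in number. Since $v\neq 0$ and the selected $q$'s are unbounded in $\mathbb{Z}_{>0}$, the vectors $qv$ have $\|qv\|_2\to\infty$, and then $\|p\|_2\gr\|qv\|_2-r/2\to\infty$, so the lattice points $x_0+p$ produced above are pairwise distinct along the sequence. Hence ${\rm int}(K)\cap\mathbb{Z}^n$ contains infinitely many points, as claimed.

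\smallskip

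The only real obstacle is that the recession direction $v$ need not be rational, so one cannot simply write down translates $x_0+kv$ which land on the lattice. The point of invoking the Diophantine lemma is precisely to bypass this: it allows one to track a lattice point that is always within a fixed distance of the ray $\{x_0+tv:t\gr 0\}$, which in turn is shielded by an $r$-tube lying inside ${\rm int}(K)$.
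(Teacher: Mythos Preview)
Your proof is correct and follows essentially the same approach as the paper's: both pick a recession direction, observe that an $r$-tube around the corresponding ray lies in ${\rm int}(K)$, and then invoke the simultaneous Diophantine approximation lemma to produce infinitely many lattice points inside that tube. The only cosmetic difference is that the paper arranges $r<|d|/2$ so that the balls $B(qd,r)$ are pairwise disjoint, whereas you deduce distinctness of the lattice points from the unboundedness of the $q$'s (which is forced since each fixed $q$ admits only finitely many admissible $p$'s).
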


\begin{proof}
    Assume without loss of generality that $0 \in {\rm int}(K)$. We define the recession cone of $K$: $${\rm rec}(K)=\{ d \in \mathbb{R}^n: x+td \in K \ \text{ for every  } x \in K \text{ and } t \geq 0 \}. $$
    Since $K$ is non-empty, convex and unbounded, by \cite[Theorem 8.4]{Rock} , we have that there exists some $d=(d_1,\ldots,d_n) \in {\rm rec}(K) \setminus \{0\}$. Therefore, $td \in K$ for every $t \geq 0$.  Now, since $0 \in {\rm int}(K)$ , there exists $r \in (0,|d|/2)$ such that $2rB_2^n \subseteq K$. By convexity of $K$ we get that $B(td,r) \subseteq K$ for every $t \geq 0$. So, by Lemma~\ref{lem:dioph} there exist infinitely many $q,p_1,\ldots,p_n \in \mathbb{Z}$ such that
    $$ |qd_i-p_i| < \frac{r}{\sqrt{n}}, $$ for every $i=1,\ldots,n$. This implies that there are infinitely many $q \in \mathbb{N}$ for which $B(qd,r) \cap \mathbb{Z}^n \neq \emptyset$. The choice of $r>0$ makes those balls disjoint, which concludes the lemma.
\end{proof}

\begin{proposition}\label{prop:bounded}
Let $X$ be a discrete log-concave random vector in $\mathbb{Z}^n$ with p.m.f $p:\mathbb{Z}^n \to [0,1)$ and let $f$ be its log-concave extension to $\mathbb{R}^n$. If  ${\rm supp}(p)$ is bounded, then $f$ is integrable.
\end{proposition}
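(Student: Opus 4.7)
The plan is to combine two simple facts: the support of the extension is contained in a compact convex polytope, and the extension is pointwise bounded by $1$. Both follow directly from the setup without any deep input, which is why I expect the proof to be very short.

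First I would note that $\mathrm{supp}(p)$, being a bounded subset of $\mathbb{Z}^n$, is finite. Writing $g=-\log p$ as in Definition~\ref{def:discrete-log-vectors}, one has $\mathrm{dom}(g)=\mathrm{supp}(p)$, and the general inclusion recorded after Definition~\ref{def:convex-extension} gives
\[
\mathrm{dom}(\tilde g)\ \subseteq\ \overline{\conv}(\mathrm{dom}(g))\ =\ \overline{\conv}(\mathrm{supp}(p)),
\]
where the right-hand side is a compact convex polytope. Since $\{f>0\}=\{\tilde g<+\infty\}=\mathrm{dom}(\tilde g)$, the extension $f$ is supported in this compact polytope.

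Next I would pin down a pointwise upper bound on $f$. Because $p$ takes values in $[0,1)$, we have $g(k)=-\log p(k)\ge 0$ on $\mathrm{dom}(g)$ (and $g(k)=+\infty$ otherwise), so the affine function $a\equiv 0$ satisfies $a(k)\le g(k)$ for every $k\in\mathbb{Z}^n$. Taking this particular $a$ in the defining supremum of Definition~\ref{def:convex-extension} yields $\tilde g\ge 0$ on $\mathbb{R}^n$, hence $f=e^{-\tilde g}\le 1$ everywhere.

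Putting the two observations together, $f$ is bounded above by $1$ and vanishes outside the compact set $\overline{\conv}(\mathrm{supp}(p))$, so
\[
\int_{\mathbb{R}^n} f(x)\,dx\ \le\ \vol_n\!\big(\overline{\conv}(\mathrm{supp}(p))\big)\ <\ +\infty,
\]
which is the desired integrability. There is no real obstacle here — the only thing to be careful about is to invoke the correct containment of effective domains and to notice that the nonnegativity of $g$ (rather than of $\tilde g$) is what legitimises using the zero affine minorant, since $\tilde g\ge 0$ is not a priori assumed but follows from this choice.
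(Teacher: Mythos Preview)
Your argument is correct. Both you and the paper use the same compact-support step, but you diverge on the pointwise bound: the paper invokes the general fact that any log-concave function admits an affine-exponential majorant $f(x)\le c\,e^{\langle v,x\rangle}$ (via a subgradient of $\tilde g$ at an interior point of its domain) and then integrates that majorant over the compact support, whereas you exploit the specific bound $f\le 1$ coming from $\tilde g\ge 0$. Your route is more elementary and in fact recycles an observation the paper itself records right after Definition~\ref{def:convex-extension}; the paper's route is slightly more general in spirit (it would also work if one only knew $g$ is bounded below by some affine function rather than by $0$), but in the present setting this extra generality is not needed.
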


\begin{proof}
Since $f$ is log-concave, there exist $c>0$ and $v \in \mathbb{R}^n$, such that $f(x)\leq c e^{\langle x,v \rangle}$ for every $x \in \mathbb{R}^n$. Moreover,  ${\rm supp}(f)=\overline{\{f>0\}}$ is compact, as ${\rm supp}(p)$ is bounded. So, it is not hard to see that $$\int\limits_{\mathbb{R}^n} f(x) \, dx \leq c \int_{\mathrm{supp}(f)} e^{\langle v,x\rangle}\,dx < +\infty.$$
    
\end{proof}

\begin{proposition}\label{prop:unbounded}
    Let $X$ be a discrete log-concave random vector in $\mathbb{Z}^n$ with p.m.f $p:\mathbb{Z}^n \to [0,1) $ and let $f$ be its log-concave extension. If ${\rm supp}(p)$ is unbounded and ${\rm int}(\conv({\rm supp}(p))) \cap \mathbb{Z}^n \neq \emptyset$, then $f$ is integrable.
\end{proposition}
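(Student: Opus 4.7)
My plan is a proof by contradiction. Assume that $f$ is not integrable; I will identify an unbounded closed convex set on which $f$ stays above a fixed positive constant, and then use Lemma~\ref{lem:infinite-lattice} to extract infinitely many lattice points at each of which $p=f$ is bounded below by that same constant, contradicting $\sum_{k\in\mathbb{Z}^n}p(k)=1$.

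The first substantive step is to produce a distinguished interior lattice point $k_0$ at which $f$ is positive and continuous. By hypothesis I may fix $k_0\in{\rm int}(\conv({\rm supp}(p)))\cap\mathbb{Z}^n$. Writing $k_0$ as a finite convex combination of points in ${\rm supp}(p)$, convexity of $\tilde g$ gives $\tilde g(k_0)<+\infty$, and convex-extensibility then yields $f(k_0)=e^{-\tilde g(k_0)}=e^{-g(k_0)}=p(k_0)>0$. Since $f$ is log-concave it is continuous on the interior of $\{f>0\}$, which by the same convexity argument contains ${\rm int}(\conv({\rm supp}(p)))$; in particular, $f$ is continuous at $k_0$.

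The second step is to upgrade non-integrability of $f$ to the existence of an unbounded superlevel set. Since $g\gr 0$ on ${\rm supp}(p)$ the zero affine function is admissible in the definition of $\tilde g$, so $\tilde g\gr 0$ and $f\ls 1$. Now if every superlevel set $\{f\gr t\}$, $t>0$, were bounded, then $\phi:=-\log f$ would be a convex, lower-semicontinuous, coercive function on $\mathbb{R}^n$; a standard recession-cone argument then produces constants $B,C>0$ with $\phi(x)\gr B\|x\|_2-C$, which would give $f(x)\ls e^{C}e^{-B\|x\|_2}$ and hence integrability. Under the contradiction hypothesis I therefore fix some $t_0>0$ for which $\{f\gr t_0\}$ is unbounded.

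The core step is to pick $t\in\bigl(0,\min(t_0,f(k_0))\bigr)$ and apply Lemma~\ref{lem:infinite-lattice} to $K:=\{f\gr t\}$. This $K$ is closed (upper-semicontinuity of $f$), convex (log-concavity), unbounded (it contains $\{f\gr t_0\}$), and has nonempty interior (a neighborhood of $k_0$ lies in it by continuity of $f$ at $k_0$). The lemma then provides infinitely many lattice points in ${\rm int}(K)$, and at each such $k$ one has $p(k)=f(k)\gr t>0$, contradicting $\sum_{k\in\mathbb{Z}^n}p(k)=1$. The main obstacle I expect is the third paragraph: converting ``not integrable'' into ``some superlevel set is unbounded'' rests on the coercive-convex linear-growth lemma, which is where the unboundedness of ${\rm supp}(p)$ actively enters; once this is in place, the assumption ${\rm int}(\conv({\rm supp}(p)))\cap\mathbb{Z}^n\neq\emptyset$ is precisely what lets the Diophantine content of Lemma~\ref{lem:infinite-lattice} close the argument.
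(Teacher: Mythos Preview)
Your proof is correct and follows the paper's strategy: use Lemma~\ref{lem:infinite-lattice} to force the relevant superlevel set of $f$ to be bounded (otherwise infinitely many lattice points would carry mass $\ge t$), and then use convexity of $\tilde g$ to upgrade one bounded sublevel set to a linear lower bound $\tilde g(x)\ge B\|x\|_2-C$; the paper does this last step explicitly via the interpolation $\frac{Rx}{\|x\|_2}=\lambda\frac{rx}{\|x\|_2}+(1-\lambda)x$, while you invoke it as a standard coercivity fact. The only difference is packaging---the paper argues directly whereas you wrap the same two ingredients in a top-level contradiction---and one small side remark: contrary to your closing comment, the hypothesis that ${\rm supp}(p)$ is unbounded is not actually used in either argument (it is present only to separate this proposition from the compactly supported case handled in Proposition~\ref{prop:bounded}).
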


\begin{proof}
    We may assume without loss of generality that $0 \in {\rm int}(\conv({\rm supp}(p)))$ and therefore $0 \in {\rm int}(\{f>0\})$. We now set $t=f(0)/2 \in (0,1)$ and $C=\{x:f(x)\geq t\}$. By continuity of $f$ at $0$, there exists some $r>0$ such that $rB_2^n \subseteq C$. We also set $K=\{x:f(x) \geq t/e \}$ which is closed and convex by upper-semicontinuity and log-concavity of $f$. We will show that $K$ is bounded. Assuming otherwise, since $0 \in {\rm int}(K)$, by Lemma~\ref{lem:infinite-lattice},  $\tilde{K}={\rm int}(K) \cap \mathbb{Z}^n$ is infinite. Then for 
every $m\in \tilde{K}$ we have $p(m)=f(m)\ge t/e>0$. Hence
\[
\sum_{m\in\mathbb{Z}^n}p(m)\ \ge\ \sum_{m\in \tilde{K}}\frac{t}{e}\ =\ +\infty,
\]
a contradiction to $p$ being a p.m.f .
Therefore, $K \subseteq \frac{R}{2}B_2^n$ for some $R>2r>0$.

     So, if $x \in \mathbb{R}^n$ with $\|x\|_2>R$ we can write $$\frac{Rx}{\|x\|_2}=\frac{\|x\|_2-R}{\|x\|_2-r}\frac{rx}{\|x\|_2}+\frac{R-r}{\|x\|_2-r}x,$$ as a convex combination, with $f(Rx/\|x\|_2) < t/e$ and $f(rx/\|x\|_2) \geq t$. By log-concavity of $f$ we get $$f(x) \leq t e^{- \frac{\|x\|_2-R}{R-r}} \leq C e^{-\|x\|_2 / R},$$ for every $\|x\|_2 > R$.

    On the other hand, if $\|x\|_2\leq R$ then for every $y \in \frac{x}{2}+\frac{r}{2}B_2^n$ we have $$f(y) \geq \sqrt{f(x)f(2y-x) } \geq \sqrt{tf(x)}.$$ Integrating now over $y$, and taking into account that $\frac{x}{2}+\frac{r}{2}B_2^n \subseteq RB_2^n$, we see that $$\int\limits_{RB_2^n} f(y) \, dy \geq \sqrt{tf(x)} \  \vol_n\left(\frac{r}{2}B_2^n\right),$$ which yields that $f(x) \leq M$ for some $M>0$, as $f$ is log-concave and $RB_2^n$ is compact.

    Combining the above, we see that there are some $A,B>0$ such that $f(x) \leq Ae^{-B\|x\|_2}$ for every $x \in \mathbb{R}^n$, which directly concludes the proof.
\end{proof}

However, in $\mathbb{R}^2$ we can ensure that the log-concave extension of every  p.m.f of a discrete log-concave random vector is integrable. To this end, we need the following lemmas:

\begin{lemma}[Kronecker's theorem]\label{lem:kronecker} 
If $a \in \mathbb{R} \setminus \mathbb{Q}$, then $(na-[na])_{n \in \mathbb{N}}$ is dense in $[0,1)$.
\end{lemma}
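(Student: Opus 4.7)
The plan is to prove Kronecker's theorem by a standard pigeonhole argument on the fractional parts $\{na\}=na-[na]$, producing an integer $q$ for which $\{qa\}$ is arbitrarily close to $0$ (on one of the two sides) and then showing that multiples of $q$ fill $[0,1)$ within the prescribed error.

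First I would fix $\varepsilon>0$ and pick $N\in\mathbb{N}$ with $1/N<\varepsilon$. Partition $[0,1)$ into $N$ half-open subintervals of length $1/N$. The $N+1$ fractional parts $\{a\},\{2a\},\dots,\{(N+1)a\}$ then admit, by pigeonhole, two indices $1\le j<i\le N+1$ with $\{ia\}$ and $\{ja\}$ lying in the same subinterval. Writing $q=i-j\ge 1$, this gives $|\{ia\}-\{ja\}|<1/N$, and a direct computation shows that $\{qa\}\in [0,1/N)\cup(1-1/N,1)$. Because $a\notin\mathbb{Q}$, $qa$ is not an integer, so $\{qa\}\ne 0$; hence either $\{qa\}=\delta$ with $0<\delta<\varepsilon$, or $\{qa\}=1-\delta$ with $0<\delta<\varepsilon$.

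The second step is to show that the orbit $\{kqa\}_{k\in\mathbb{N}}$ is already $\varepsilon$-dense in $[0,1)$, which a fortiori implies the density of the full sequence $\{na\}_{n\in\mathbb{N}}$. In the first case, successive iterates satisfy $\{(k+1)qa\}=\{\{kqa\}+\delta\}$, so the sequence advances by $\delta$ modulo $1$; every arc of length $\varepsilon>\delta$ must therefore contain a term. In the second case, $\{(k+1)qa\}=\{\{kqa\}-\delta\}$, so the sequence advances by $\delta$ in the opposite direction, and the same argument applies. Letting $\varepsilon\to 0^+$ concludes the density claim.

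The only subtlety I would watch is the modular bookkeeping in the second step, in particular confirming that the wraparound does not create a gap larger than $\delta$ between consecutive terms; this is immediate from the identity $\{x+\delta\}\in\{x+\delta,x+\delta-1\}$ but is worth stating explicitly. Everything else is a direct pigeonhole plus irrationality argument, and no deeper tools are needed.
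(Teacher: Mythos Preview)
Your argument is correct and is the standard pigeonhole proof of Kronecker's approximation theorem. The paper, however, does not supply a proof at all: it simply records the statement as the classical Kronecker theorem and uses it as a black box in the proof of Lemma~\ref{lem:R^2}. So there is nothing to compare against; your proposal is a valid self-contained justification where the paper merely cites the result.
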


\begin{lemma}\label{lem:R^2}
    Let $K \subseteq \mathbb{R}^2$ be convex, closed, unbounded, with non-empty interior. If $K \cap \mathbb{Z}^2 \neq \emptyset$, then $K \cap \mathbb{Z}^2$ is infinite.
    
\end{lemma}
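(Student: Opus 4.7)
The plan is to extract a nonzero direction $d = (d_1, d_2)$ from the recession cone ${\rm rec}(K)$ — guaranteed by $K$ being nonempty, closed, convex, and unbounded, via \cite[Theorem~8.4]{Rock} as in the proof of Lemma~\ref{lem:infinite-lattice} — and to split into two cases according to whether $d$ is rationally commensurable with the lattice. In the \emph{rational case}, suppose $d$ is a positive multiple of a primitive vector $v \in \mathbb{Z}^2 \setminus \{0\}$; this covers, in particular, the situation in which one coordinate of $d$ vanishes. Then $v \in {\rm rec}(K)$ since ${\rm rec}(K)$ is closed under positive scaling, so fixing a lattice point $k_0 \in K \cap \mathbb{Z}^2$ provided by the hypothesis, the points $k_0 + n v$, $n \in \mathbb{N}$, are distinct elements of $K \cap \mathbb{Z}^2$, and we are done.

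The remaining \emph{irrational case} is when $d_2 \neq 0$ (say) and $d_1/d_2 \in \mathbb{R} \setminus \mathbb{Q}$. Here the given lattice point plays no role; we rely instead on the non-empty interior of $K$. Choose $x_0 \in {\rm int}(K)$ and $r > 0$ with $B(x_0, r) \subseteq K$. Since $d \in {\rm rec}(K)$, for every $t \geq 0$ we have $B(x_0, r) + t d = B(x_0 + t d, r) \subseteq K$, so a tube of width $2r$ along the direction $d$ is contained in $K$. For each large $n \in \mathbb{N}$, choose $t_n \geq 0$ so that the second coordinate of the center $x_0 + t_n d$ equals $n$; its first coordinate then takes the form $(d_1/d_2)\,n + c$, where $c$ depends only on $x_0$ and $d$. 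By Lemma~\ref{lem:kronecker} applied to the irrational $a = d_1/d_2$, the fractional parts of $(d_1/d_2)\,n + c$ are dense in $[0,1)$, so infinitely many $n$ admit an integer $m_n$ with $|m_n - (d_1/d_2)\,n - c| < r$. For each such $n$, the lattice point $(m_n, n)$ lies in $B(x_0 + t_n d, r) \subseteq K$, producing infinitely many points of $K \cap \mathbb{Z}^2$.

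The main obstacle is the irrational case, where translating $k_0$ along $d$ never yields a new lattice point and one must convert the interior-ball hypothesis into Diophantine information. The two-dimensional setting is essential for invoking Kronecker's theorem directly on a single irrational $d_1/d_2$; in higher dimensions the analogous assertion would require the simultaneous-approximation variant of Lemma~\ref{lem:dioph}, which is precisely why Lemma~\ref{lem:infinite-lattice} is stated under the strictly stronger hypothesis ${\rm int}(K) \cap \mathbb{Z}^n \neq \emptyset$.
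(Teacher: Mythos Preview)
Your proof is correct and follows essentially the same route as the paper's --- recession direction, rational/irrational dichotomy, Kronecker in the irrational case --- though your tube argument is marginally cleaner than the paper's strip-based reduction. One cosmetic point: solving $(x_0)_2 + t_n d_2 = n$ with $t_n \ge 0$ for large $n \in \mathbb{N}$ tacitly assumes $d_2 > 0$; if $d_2 < 0$, simply let $n$ run through large negative integers instead.
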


\begin{proof}
    Without loss of generality we may assume that $0 \in K$. If $0 \in {\rm int}(K)$, then  Lemma~\ref{lem:infinite-lattice} gives the result. So, we can suppose that $0 \in {\rm bd}(K)$. Since $K$ is unbounded, we can find some $d \in {\rm rec}(K) \setminus \{0\}$. So, we have $$B=\{td: t\geq 0 \} \subseteq K.$$ If ${\rm span} \{d\} \cap\mathbb{Q}^2 \neq \{0\}$, then it is clear that $B \cap \mathbb{Z}^2$ is infinite. Assume now otherwise and let $v \in {\rm int}(K) \setminus {\rm span}\{d\}$. So, we also have $$C=\{v+td: t \geq 0\} \subseteq K.$$ By convexity of $K$ it is enough to show that $\conv\{B,C\} \cap \mathbb{Z}^2$ is infinite. Now, one can see that it suffices to show that for every $a \in \mathbb{R} \setminus \mathbb{Q}$ and $0<b<1$ there are infinitely many solutions $(x,y) \in \mathbb{N}^2$ of $$0 \leq y-ax <b.$$  Indeed, by Lemma~\ref{lem:kronecker} we can find a strictly increasing sequence $(n_k)_{k \in \mathbb{N}}$ of natural numbers such that $n_k a-[n_ka] \in (1-b,1)$ for every $k \in \mathbb{N}$. Thus,  $(n_k,[n_ka]+1) \in \mathbb{N}^2$ satisfies the above inequality for every $k \in \mathbb{N}$.
\end{proof}

\begin{proposition}\label{prop:Z^2}
    Let X be a discrete log-concave random vector in $\mathbb{Z}^2$ with p.m.f $p: \mathbb{Z}^2 \to [0,1)$ and let $f$ be its log-concave extension. Then $f$ is integrable.
\end{proposition}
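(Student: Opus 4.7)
My plan is to adapt the proof of Proposition~\ref{prop:unbounded} by replacing Lemma~\ref{lem:infinite-lattice} with its two-dimensional strengthening Lemma~\ref{lem:R^2}: the latter tolerates a lattice point \emph{anywhere} in the convex set, not only in its interior, which lets us dispense with the hypothesis ${\rm int}(\conv({\rm supp}(p)))\cap \mathbb{Z}^2\neq\emptyset$. If ${\rm supp}(p)$ is bounded, Proposition~\ref{prop:bounded} already gives the result; if ${\rm supp}(p)$ is unbounded but ${\rm int}(\conv({\rm supp}(p)))\cap \mathbb{Z}^2\neq\emptyset$, Proposition~\ref{prop:unbounded} applies. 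So I only need to handle the case when ${\rm supp}(p)$ is unbounded and every lattice point of $\conv({\rm supp}(p))$ lies on its boundary (as happens, for instance, with ${\rm supp}(p)\subseteq \mathbb{Z}\times\{0,1\}$).

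In that remaining case, full-dimensionality of ${\rm supp}(p)$ makes ${\rm int}({\rm supp}(f))$ non-empty; I fix $x^*\in{\rm int}({\rm supp}(f))$ and use continuity of $f$ at $x^*$ to choose $r>0$ and $t=f(x^*)/2>0$ with $x^*+rB_2^2\subseteq\{f\ge t\}$. Set $K:=\{f\ge t/e\}$: it is closed (upper semicontinuity), convex (log-concavity), and contains $x^*+rB_2^2$, hence has non-empty interior. To apply Lemma~\ref{lem:R^2} I need a lattice point in $K$. Since $p$ is a p.m.f.\ on $\mathbb{Z}^2$, only finitely many lattice points can satisfy $p(m)\ge\varepsilon$ for each $\varepsilon>0$, so $\max p$ is attained at some $m_0\in\mathbb{Z}^2$. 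Moreover $\max f=\max p$: with $g=-\log p$, the constant function $c\equiv\min_k g(k)$ is a convex, lsc minorant of $g$ on $\mathbb{Z}^2$, so the biconjugate definition of $\tilde g$ yields $\tilde g\ge \min g$ on $\mathbb{R}^2$, hence $f\le \max p$, with equality at $m_0$. Consequently $f(m_0)=\max p \ge f(x^*)>t/e$, so $m_0\in K\cap\mathbb{Z}^2$. If $K$ were unbounded, Lemma~\ref{lem:R^2} would give $|K\cap\mathbb{Z}^2|=+\infty$, and then $\sum_{m\in K\cap\mathbb{Z}^2}p(m)\ge(t/e)\,|K\cap\mathbb{Z}^2|=+\infty$ would contradict $\sum_m p(m)=1$.

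Once $K$ is bounded, say $K\subseteq x^*+\tfrac{R}{2}B_2^2$ for some $R>2r$, I translate $x^*$ to the origin and run the convex-combination / log-concavity estimate of Proposition~\ref{prop:unbounded} unchanged, producing constants $A,B>0$ such that $f(y)\le A\,e^{-B\|y-x^*\|_2}$ for all sufficiently large $\|y-x^*\|_2$; together with the fact that $f$ is bounded on compact sets, this yields $\int_{\mathbb{R}^2}f<+\infty$. The main technical point is locating a lattice point inside $K$ when $x^*$ is not itself in $\mathbb{Z}^2$: the identification $\max f=\max p$, combined with the easy fact that $\max p$ is attained on any p.m.f., is what makes Lemma~\ref{lem:R^2} applicable, and it is precisely the step that genuinely needs $n=2$ (for $n\ge 3$ the analogue of Lemma~\ref{lem:R^2} is no longer available).
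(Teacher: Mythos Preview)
Your proof is correct and follows essentially the same route as the paper: reduce to the unbounded case, choose a superlevel set $K=\{f\ge t/e\}$ containing both an interior ball and a lattice point, invoke Lemma~\ref{lem:R^2} to force $K$ bounded, and finish by the convex-combination estimate of Proposition~\ref{prop:unbounded}. The only difference is in how the lattice point in $K$ is produced: the paper simply picks any $k_0\in{\rm supp}(p)$ (taking $k_0=0$ w.l.o.g.) and then chooses $t=f(0)/m$ small enough that both $0$ and the interior ball lie in $K$, whereas you fix $t=f(x^*)/2$ first and then locate a lattice point via the (correct but unnecessary) identification $\max f=\max p$---the paper's choice is a bit more direct, though your observation incidentally gives global boundedness of $f$ for free.
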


\begin{proof}
    By Proposition~\ref{prop:bounded}, we may assume that ${\rm supp}(p)$ is unbounded. Furthermore, without loss of generality we can suppose that $f(0)>0$. Since ${\rm supp}(p)$ is also full-dimensional, there exists some $x_0 \in \operatorname{int}(\{f>0\})$. Select $m>0$ such that $f(x_0)>f(0)/m$ and set $t=f(0)/m$, $C=\{x:f(x) \geq t\}$ and $K=\{x:f(x) \geq t/e\}$. By continuity of $f$ at $x_0$ there exists $r>0$ such that $x_0+rB_2^n \subseteq C$. Also, $K$ is convex, closed with non-empty interior, and $0 \in K$. So, by Lemma~\ref{lem:R^2}, $K$ cannot be unbounded, as $p$ is a p.m.f. It follows that $K \subseteq x_0+\frac{R}{2}B_2^n$ for some $R>2r>0$. Then, translating by $-x_0$ and working with $f_1(\cdot)=f(\cdot +x_0)$ as in the proof of Proposition~\ref{prop:unbounded}, we get the result.
\end{proof}

\begin{remark}\label{koloun}
    If Lemma~\ref{lem:R^2} was valid for every $n \geq 2$, then in the same way, we could show the integrability of the log-concave extension in full generality. However, for $n\geq 3$ the lemma fails. Indeed, if we consider $$K=\left\{(x,y,z): \left(x-\sqrt{2}z\right)^2+\left(y-\frac{1}{3}\right)^2 \leq \frac{1}{9}\right\} \subseteq \mathbb{R}^3,$$ then $K$ is convex, closed, unbounded, with non-empty interior and $K \cap \mathbb{Z}^3=\{0\}$. Moreover, in $\mathbb{R}^4$ the sets $K_m=K \times [1,m], m \in \mathbb{N}$ have the same properties and can only contain finitely many integer points, which makes the proof of Proposition~\ref{prop:Z^2} not work.
\end{remark}

Now we show that the integrability of the log-concave extension implies the finiteness of $\Lambda_{X}^\ast$ moments.

\begin{proof}[Proof of Theorem~\ref{thm:lambda-star-moments}]
Let $f=e^{-\tilde{g}}$ , where $\tilde{g}:\mathbb{R}^n\to[0,+\infty]$ is a convex function. Due to integrability of $f$, there exist $A,B>0$, such that $f(x) \leq Ae^{-B\|x\|_2}$ for every $x \in \mathbb{R}^n$.

 Now, if $k \in \mathbb{Z}^n$ and $\xi \in \mathbb{R}^n$, we have $$e^{\Lambda_{X}(\xi)}= \sum\limits_{m \in \mathbb{Z}^n} p(m)e^{\langle m,\xi \rangle} \geq p(k)e^{\langle k,\xi \rangle}. $$
Therefore, $$\Lambda_{X}^{\ast}(k)=\sup\limits_{\xi \in \mathbb{R}^n} \{ \langle k, \xi \rangle - \Lambda_{X}(\xi) \} \leq \log \left( \frac{1}{p(k)} \right).$$
So, if $q \geq 1$ $$\mathbb{E}[(\Lambda_{X}^{\ast})^q]= \sum\limits_{k \in \mathbb{Z}^n} (\Lambda_X^{\ast}(k))^q p(k) \leq \sum\limits_{k \in \mathbb{Z}^n} p(k) \log^q\left( \frac{1}{p(k)} \right)= \sum\limits_{k \in \mathbb{Z}^n} \tilde{g}^q(k)e^{-\tilde{g}(k)}$$
Since $\lim\limits_{x\to +\infty} x^qe^{-x/2}=0$ and $\lim\limits_{\|x\|_2\to +\infty} \tilde{g}(x)=+\infty$,  there exists $R>0$ such that $\tilde{g}^q(x)\leq e^{\tilde{g}(x)/2}$ for every $\|x\|_2>R$. Thus,  $$\mathbb{E}[(\Lambda_{X}^{\ast})^q] \leq N +\sum\limits_{\substack{k \in \mathbb{Z}^n \\ \|k\|_2>R}}e^{-\tilde{g}(k)/2}=N+ \sqrt{A}\sum\limits_{\substack{k \in \mathbb{Z}^n \\ \|k\|_2>R}}e^{-B\|k\|_2/2} < +\infty.$$
\end{proof}

\section{Weak threshold for product measures}

Let $\mu$ be an even  Borel  probability measure on $\mathbb{R}$ and let $$x^\ast=x^\ast(\mu)=\sup \{ x \in \mathbb{R}: \mu([x,+\infty))>0\} \in (0,+\infty]$$ be its right endpoint. For every $n \in \mathbb{N}$ we set $\mu_n:=\mu\otimes\cdots\otimes\mu$ ($n$-times). We will show that for certain classes of measures $\mu$, the sequence $(\mu_n)_{n \in \mathbb{N}}$ exhibits a weak threshold. The method we employ has been studied extensively in \cite{GaGia} and \cite{Paf}. We therefore only sketch the argument, emphasizing the main ideas and the aspects that are specific to the cases under consideration. A key ingredient is the following lemma.

\begin{lemma}\label{lem:DFM}
Let $\mu$ be a Borel probability measure on $\mathbb{R}$ and $K_N$ be the convex hull of $N$ random vectors independently chosen according to $\mu_n$.\\
{\rm (a)} For every Borel subset $A$ of $\mathbb{R}^n$,
\begin{equation}\label{eq:DFM-lemma-upper}
\E_{\mu_n^N}[\mu_n(K_N)]\ls \mu_n(A)+N\cdot \sup_{x\in A^c}q_{\mu_n}(x).
\end{equation}
{\rm (b)} In addition, if $p_\mu=\max \{\mu(\{x\}): x \in \mathbb{R}\}<1$, then
\begin{equation}\label{eq:DFM-lemma-lower}
\E_{\mu_n^N}[\mu_n(K_N)]\gr \mu_n(A)\left(1-\binom{N}{n}p_\mu^{N-n}-2\binom{N}{n}\left(1-\inf_{x\in A}q_{\mu_n}(x)\right)^{N-n}\right).
\end{equation}
\end{lemma}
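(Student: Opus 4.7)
The plan is to dissect the event $\{x\in K_N\}$ via the half-space geometry of the samples: a forward union bound for (a) and a decomposition isolating degenerate sample configurations for (b).

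For (a), I would write $\mathbb{E}_{\mu_n^N}[\mu_n(K_N)] = \int_{\R^n}\Prob(x\in K_N)\,d\mu_n(x)$ and split over $A$ and $A^c$: the trivial bound $\Prob(x\in K_N)\leq 1$ on $A$ contributes $\mu_n(A)$. For $x\in A^c$, since $x\in K_N$ forces that for every direction $\theta$ some $X_i$ satisfies $\langle X_i,\theta\rangle\geq\langle x,\theta\rangle$, choosing $\theta$ to approach the infimum defining $q_{\mu_n}(x)$ and union-bounding over $i=1,\ldots,N$ yields the pointwise estimate $\Prob(x\in K_N)\leq N\,q_{\mu_n}(x)$; integrating over $A^c$ and absorbing the factor $\mu_n(A^c)\leq 1$ gives \eqref{eq:DFM-lemma-upper}.

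For (b), fix $x\in A$ and bound $\Prob(x\notin K_N)$ from above. The core decomposition is
\[
\{x\notin K_N\}\;\subseteq\; D\;\cup\;\bigcup_{|S|=n}\bigl(F_S^+\cup F_S^-\bigr),
\]
where $D$ is the event that all $N$ samples lie in a common affine hyperplane and, for each $n$-subset $S$ and sign, $F_S^\pm$ is the event that $(X_i)_{i\in S}$ are affinely independent and the hyperplane $H_S$ they span supports $K_N$ with $x$ strictly on the $\pm$ side. On $D^c$, i.e.\ when $K_N$ is full-dimensional, a Hahn--Banach supporting hyperplane strictly separating $x$ from $K_N$ can be chosen as a facet of $K_N$, which is determined by $n$ affinely independent sample vertices.

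The key analytic input is the hyperplane mass bound $\sup_H \mu_n(H)\leq p_\mu$, which uses the product structure of $\mu_n$: for $H=\{y:\langle y,\theta\rangle=c\}$ with $\theta_i\neq 0$, conditioning on the remaining coordinates reduces the event to $\{Y_i=t\}$ for deterministic $t$, hence of $\mu$-mass at most $p_\mu$. Applying this inside a union bound over $n$-subsets: for $D$, conditioning on $(X_i)_{i\in S}$ affinely independent makes the hyperplane they span deterministic and of $\mu_n$-mass $\leq p_\mu$, so the remaining $N-n$ independent samples fall into it with conditional probability at most $p_\mu^{N-n}$, giving $\Prob(D)\leq \binom{N}{n}p_\mu^{N-n}$. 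For $F_S^\pm$, conditioning on $(X_i)_{i\in S}$ makes $H_S$ deterministic, and the definition of Tukey depth forces each remaining $X_j$ to the $x$-opposite closed side of $H_S$ with conditional probability at most $1-q_{\mu_n}(x)\leq 1-\inf_{x\in A}q_{\mu_n}(x)$; independence raises this to the power $N-n$, and summing over $\binom{N}{n}$ subsets and both signs produces the second term. Integrating the resulting pointwise lower bound on $\Prob(x\in K_N)$ against $\mu_n$ restricted to $A$ yields \eqref{eq:DFM-lemma-lower}.

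The main obstacle will be cleanly handling the degenerate configurations: a supporting hyperplane of $K_N$ need not pass through $n$ sample vertices when $K_N$ fails to be full-dimensional, and one must verify that every such degeneracy is already captured by $D$. The product-structure bound $\mu_n(H)\leq p_\mu$ is the sole place where the atomic nature of $\mu$ enters nontrivially, and it is exactly what allows the factor $p_\mu^{N-n}$ to absorb all non-facet configurations.
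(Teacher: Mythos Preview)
The paper does not give its own proof of this lemma; it is quoted as a standard ingredient from \cite{DFM,GaGia,Paf}. Your proposal reconstructs exactly that classical argument, and part~(a) is correct as written.

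For part~(b) there is one imprecision in the bound on $\Prob(D)$. Your phrasing ``conditioning on $(X_i)_{i\in S}$ affinely independent'' implicitly uses the inclusion
\[
D\ \subseteq\ \bigcup_{|S|=n}\bigl\{(X_i)_{i\in S}\text{ aff.\ ind.\ and all }X_j\in H_S\bigr\},
\]
which fails whenever the whole sample lies in an affine subspace of dimension $\le n-2$: then $D$ holds but no $n$-subset is affinely independent, so the right-hand side is empty. The repair is to drop the independence clause and take $D_S=\{X_j\in\operatorname{aff}(X_i:i\in S)\text{ for all }j\notin S\}$. Then $D\subseteq\bigcup_{|S|=n}D_S$ (on $D$, choose $S$ to contain a maximal affinely independent subfamily of the sample), and since the affine hull of any $n$ points has dimension at most $n-1$, it is always contained in some hyperplane, so your product-structure bound gives $\mu_n\bigl(\operatorname{aff}(X_i:i\in S)\bigr)\le p_\mu$ unconditionally. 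Independence of the remaining $N-n$ samples then yields $\Prob(D_S)\le p_\mu^{N-n}$ and the stated bound $\Prob(D)\le\binom{N}{n}p_\mu^{N-n}$. The rest of your argument---the facet-separation step on $D^c$, the Tukey-depth estimate $\mu_n(H_S^-)\le 1-q_{\mu_n}(x)$ for the events $F_S^\pm$, and the final integration over $A$---is correct.
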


\subsection{Threshold for product measure with mass at the endpoint}

In this subsection we identify and provide a missing justification that is necessary for the thresholds in \cite{DFM} and \cite{GaGia}. As an application, we show a weak threshold for even compactly supported product measures with mass at the endpoint, which is the case that was not covered in \cite{Paf}.

Hence we assume that $x^\ast <+\infty$ and we set $p^\ast=\mu(\{x^\ast\})$. To begin with, note that the m.g.f $\phi_{\mu}(\xi)$ of $\mu$ converges for every $\xi \in \mathbb{R}$, since $\mu$ is compactly supported. Moreover, if $\Lambda_\mu=\log \phi_\mu$ we have the following lemma (for a proof see \cite[Lemma 2.3]{GaGia}).

\begin{lemma}\label{lem:Lambda-property} $\Lambda_\mu': \mathbb{R} \to (-x^\ast,x^\ast) $ is strictly increasing and surjective. In particular, $$\lim_{\xi\to \pm \infty}\Lambda_{\mu}'(\xi)=\pm x^{\ast} .$$
\end{lemma}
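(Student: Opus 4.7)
The plan is to work with the exponentially tilted family of probability measures $d\nu_\xi(z) := e^{\xi z - \Lambda_\mu(\xi)}\,d\mu(z)$. Because $\mu$ is compactly supported, $\phi_\mu$ is real-analytic on $\mathbb{R}$ and dominated convergence permits differentiation under the integral, yielding the standard tilt identities
\[
\Lambda_\mu'(\xi) = \mathbb{E}_{\nu_\xi}[Z], \qquad \Lambda_\mu''(\xi) = \mathrm{Var}_{\nu_\xi}[Z],
\]
where $Z(z)=z$ denotes the coordinate function on $[-x^\ast,x^\ast]$. Since $\mu$ is even with $x^\ast>0$, its support contains at least two points, and the same holds for $\nu_\xi$ (whose density relative to $\mu$ is strictly positive). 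Consequently $\Lambda_\mu''(\xi) = \mathrm{Var}_{\nu_\xi}(Z) > 0$ for every $\xi\in\mathbb{R}$, which gives strict monotonicity of $\Lambda_\mu'$.

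For the range, the bound $|\Lambda_\mu'(\xi)|\leq x^\ast$ is immediate from $|Z|\leq x^\ast$ $\mu$-a.s. To make this strict, I would use the atoms of $\mu$: by evenness and the standing assumption $p^\ast = \mu(\{x^\ast\})>0$, we have $\mu(\{-x^\ast\})=p^\ast$, hence $\nu_\xi(\{-x^\ast\}) = p^\ast e^{-\xi x^\ast}/\phi_\mu(\xi) > 0$, which forces $\mathbb{E}_{\nu_\xi}[Z] < x^\ast$; symmetrically $\mathbb{E}_{\nu_\xi}[Z] > -x^\ast$. Hence $\Lambda_\mu'(\mathbb{R})\subseteq (-x^\ast,x^\ast)$.

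For the asymptotics at $\pm\infty$, I would show concentration of $\nu_\xi$ near $x^\ast$ as $\xi\to +\infty$ by splitting $[-x^\ast,x^\ast]$ into $A_\delta=[x^\ast-\delta,x^\ast]$ and $B_\delta=[-x^\ast,x^\ast-\delta)$ for an arbitrary small $\delta>0$. The atom at $x^\ast$ gives $\phi_\mu(\xi)\geq p^\ast e^{\xi x^\ast}$, while the trivial bound $\int_{B_\delta}(1+|z|)e^{\xi z}\,d\mu \leq (1+x^\ast)e^{\xi(x^\ast-\delta)}$ shows that the $B_\delta$-contribution to both numerator and denominator of $\Lambda_\mu'(\xi) = \mathbb{E}_{\nu_\xi}[Z]$ is $O(e^{-\xi\delta})$ after dividing through by $p^\ast e^{\xi x^\ast}$. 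The surviving $A_\delta$-contribution pins $\Lambda_\mu'(\xi)$ in $[x^\ast-\delta,x^\ast]$ up to a vanishing error, so $\liminf_{\xi\to+\infty}\Lambda_\mu'(\xi)\geq x^\ast-\delta$, and letting $\delta\to 0$ yields $\lim_{\xi\to +\infty}\Lambda_\mu'(\xi)=x^\ast$. Evenness of $\mu$ makes $\Lambda_\mu$ even and hence $\Lambda_\mu'$ odd, which handles $\xi\to -\infty$ symmetrically. Surjectivity onto $(-x^\ast,x^\ast)$ follows from continuity of $\Lambda_\mu'$ via the intermediate value theorem.

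The only non-routine step is the quantitative tilt-concentration in the last paragraph; however, the assumed atom $p^\ast>0$ at the right endpoint supplies a clean exponential lower bound for $\phi_\mu(\xi)$, making the comparison immediate. Without such an atom one would instead exploit $\mu([x^\ast-\delta,x^\ast])>0$ for every $\delta>0$ (which holds by definition of the right endpoint), but this refinement is not needed within the scope of the present subsection.
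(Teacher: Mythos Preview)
Your proof is correct and follows the standard exponential-tilting route: the identities $\Lambda_\mu'(\xi)=\mathbb{E}_{\nu_\xi}[Z]$ and $\Lambda_\mu''(\xi)=\mathrm{Var}_{\nu_\xi}[Z]$ give strict monotonicity (since $\mu$, being even with $x^\ast>0$, is non-degenerate), the range inclusion is immediate, and the endpoint limits follow from the concentration of $\nu_\xi$ near $x^\ast$ as $\xi\to+\infty$, which you handle cleanly via the atom $p^\ast>0$.

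The paper does not supply its own proof of this lemma; it simply cites \cite[Lemma~2.3]{GaGia}. Your argument is precisely the classical one and is almost certainly what the cited reference does. One minor remark: your strict-range step invokes the atom at $-x^\ast$, but you do not actually need it there---the non-degeneracy of $\mu$ already guarantees $\nu_\xi$ is not a Dirac at $x^\ast$, hence $\mathbb{E}_{\nu_\xi}[Z]<x^\ast$. The atom is genuinely useful only in the asymptotic step, where it furnishes the lower bound $\phi_\mu(\xi)\geq p^\ast e^{\xi x^\ast}$; as you correctly observe, without it one would fall back on $\mu([x^\ast-\delta,x^\ast])>0$, which still works but is slightly less clean.
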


Now we can define $h:(-x^\ast,x^\ast) \to \mathbb{R}$ by $h=(\Lambda_{\mu}')^{-1}$, which is a strictly increasing and $C^{\infty}$ function. Consider also the Cram\'{e}r transform $\Lambda_{\mu}^{\ast}$. In the next lemma we collect the basic properties of $\Lambda_{\mu}^{\ast}$ (for a proof see \cite[Proposition 2.12]{GaGia2006} and \cite[Lemma 2.8]{GaGia}).

\begin{lemma}\label{lem:star-prop} The following hold true:
\begin{enumerate}
\item[{\rm (i)}]  $\Lambda_{\mu}^{\ast }\gr 0$, $\Lambda_{\mu}^{\ast }(0)=0$ and
$\Lambda_{\mu}^{\ast }(x)=+\infty$ for $x\in\mathbb{R}\setminus [-x^{\ast} ,x^{\ast}
]$.
\item[{\rm (ii)}] For every $x\in (-x^\ast,x^{\ast})$ we have
$\Lambda_{\mu}^{\ast }(x)=\xi x-\Lambda_{\mu}(\xi)$ if and only if $\Lambda_{\mu}'(\xi)=x;$ hence
$$\Lambda_{\mu}^{\ast }(x)=x h(x)-\Lambda_{\mu}(h(x)) \qquad\text{for}\ x\in(-x^\ast,x^{\ast}) .$$
\item[{\rm (iii)}] $\Lambda_{\mu}^{\ast }$ is a strictly convex
$C^\infty$ function on $(-x^\ast,x^{\ast}),$ and $$(\Lambda_{\mu}^{\ast })'(x)=h(x).$$
\item[{\rm (iv)}] $\Lambda_{\mu}^{\ast }$ attains its unique minimum on
$(-x^\ast,x^\ast)$ at $x=0$.
\item[{\rm (v)}] $\Lambda_{\mu }^{\ast }$ admits a continuous extension to $\pm x^\ast$ by setting $\Lambda_{\mu}^\ast(\pm x^\ast)=-\log p^\ast$.
\end{enumerate}
\end{lemma}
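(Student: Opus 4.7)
The plan is to verify items (i)--(iv) by standard Legendre--Fenchel calculus and to reserve the real work for (v). For (i), plugging $\xi=0$ into the defining supremum (together with $\Lambda_\mu(0)=0$) gives $\Lambda_\mu^\ast\ge 0$; evenness of $\mu$ forces $\Lambda_\mu'(0)=\E_\mu[X]=0$, so the convex function $\Lambda_\mu$ is minimised at $0$, whence $-\Lambda_\mu\le 0$ and $\Lambda_\mu^\ast(0)=0$. For $|x|>x^\ast$, the crude bound $\Lambda_\mu(\xi)\le|\xi|x^\ast$ makes $x\xi-\Lambda_\mu(\xi)\to+\infty$ along $\xi\to\operatorname{sgn}(x)\infty$, giving $\Lambda_\mu^\ast(x)=+\infty$. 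For (ii)--(iv), I would exploit that $\Lambda_\mu$ is real-analytic and strictly convex on $\mathbb{R}$ (compact support, non-Dirac), so $h=(\Lambda_\mu')^{-1}\in C^\infty(-x^\ast,x^\ast)$ by Lemma~\ref{lem:Lambda-property}. The concave map $\xi\mapsto x\xi-\Lambda_\mu(\xi)$ is uniquely maximised at $\xi=h(x)$, proving (ii). Differentiating the identity $\Lambda_\mu^\ast(x)=xh(x)-\Lambda_\mu(h(x))$ and using $\Lambda_\mu'(h(x))=x$ collapses the derivative to $h(x)$, so $\Lambda_\mu^\ast\in C^\infty$ is strictly convex on $(-x^\ast,x^\ast)$, and (iv) is immediate from $h(0)=0$ together with strict convexity.

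For (v), I would first prove the tilt asymptotic
\[
\lim_{\xi\to+\infty}\bigl(\Lambda_\mu(\xi)-\xi x^\ast\bigr)=\log p^\ast
\]
by dominated convergence applied to $e^{\xi(z-x^\ast)}\to\mathds{1}_{\{x^\ast\}}(z)$ on $[-x^\ast,x^\ast]$ with envelope $1$. Because $\xi\mapsto x^\ast\xi-\Lambda_\mu(\xi)$ has derivative $x^\ast-\Lambda_\mu'(\xi)>0$ (Lemma~\ref{lem:Lambda-property}), it is strictly increasing to $-\log p^\ast$ as $\xi\to+\infty$ and to $-\infty$ as $\xi\to-\infty$, so
\[
\Lambda_\mu^\ast(x^\ast)=\sup_{\xi\in\mathbb{R}}\bigl\{x^\ast\xi-\Lambda_\mu(\xi)\bigr\}=-\log p^\ast.
\]
Continuity at $x^\ast$ then follows from matching one-sided bounds: lower semicontinuity of a Legendre transform gives $\Lambda_\mu^\ast(x^\ast)\le\liminf_{x\to x^\ast-}\Lambda_\mu^\ast(x)$, while convexity on $[0,x^\ast]$ with $\Lambda_\mu^\ast(0)=0$ yields $\Lambda_\mu^\ast(x)\le\tfrac{x}{x^\ast}\Lambda_\mu^\ast(x^\ast)$ for $x\in[0,x^\ast]$, whence $\limsup_{x\to x^\ast-}\Lambda_\mu^\ast(x)\le\Lambda_\mu^\ast(x^\ast)$. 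The case $x=-x^\ast$ follows by the symmetry $\Lambda_\mu^\ast(-x)=\Lambda_\mu^\ast(x)$ inherited from evenness of $\mu$.

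The main obstacle is precisely the boundary behaviour in (v): at $x=x^\ast$ the supremum defining $\Lambda_\mu^\ast(x^\ast)$ is \emph{not} attained---no finite $\xi$ achieves it and $h$ does not extend continuously to $x^\ast$---so the tidy identity $\Lambda_\mu^\ast(x)=xh(x)-\Lambda_\mu(h(x))$ from (ii) breaks down. Transporting the value $-\log p^\ast$ from the horizon $\xi\to+\infty$ to the point $x^\ast$ therefore requires combining the tilted-measure asymptotic with the convex-analytic closure argument above, and this is precisely the atomic-boundary subtlety underlying the broader gap in \cite{DFM} that is highlighted in the introduction.
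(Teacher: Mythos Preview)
Your proof is correct and self-contained. The paper does not actually supply a proof of this lemma; it simply refers the reader to \cite[Proposition~2.12]{GaGia2006} and \cite[Lemma~2.8]{GaGia}. Your treatment of (i)--(iv) is the standard Legendre--Fenchel argument one would expect those references to contain, and your handling of (v) via the tilt asymptotic $\int e^{\xi(z-x^\ast)}\,d\mu(z)\to p^\ast$ combined with lower semicontinuity and the convexity interpolation bound is clean and complete. The closing paragraph is commentary rather than proof, and the link you draw to the DFM gap is a reasonable observation but not strictly part of establishing the lemma itself.
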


\begin{remark}
    We note that every such measure $\mu$ satisfies the $\Lambda^\ast$-condition. In other words, we have $$\lim\limits_{x\uparrow x^{\ast}}\dfrac{-\log \mu([x,+\infty))}{\Lambda_{\mu}^{\ast }(x)}=1.$$
\end{remark}

We will now estimate $\varrho_{i}(\mu_n,\delta), \ i=1,2$. Notice that our definition for $\varrho_{i}(\mu_n,\delta)$ gives the ones from \cite{Paf} multiplied by $n$.  We start with the upper threshold. The proof of the next theorem is based on Lemma~\ref{lem:DFM}\,(a). It is identical  to the one in \cite[Theorem 3.1]{Paf} and holds true for every even Borel probability measure on $\mathbb{R}$.

\begin{theorem}\label{th:upper-mun}
For every $\delta\in \left(0,\tfrac{1}{2}\right)$ there exist $c(\mu,\delta)>0$ and $n_0(\mu,\delta )\in {\mathbb N}$ such that
$$\varrho_1(\mu_n ,\delta )\gr \left(1-\frac{c(\mu,\delta)}{\sqrt{n}}\right)\mathbb{E}_{\mu_n}[\Lambda_{\mu_n}^{*}].$$
\end{theorem}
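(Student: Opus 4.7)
The plan is to apply Lemma~\ref{lem:DFM}\,(a) with $A$ taken to be a sublevel set of $\Lambda_{\mu_n}^{\ast}$, and extract the threshold by combining the pointwise bound $q_{\mu_n}\ls e^{-\Lambda_{\mu_n}^{\ast}}$ with a Chebyshev-type concentration estimate for $\Lambda_{\mu_n}^{\ast}$ under $\mu_n$. Fix $\delta\in(0,1/2)$ and write $m_n:=\E_{\mu_n}[\Lambda_{\mu_n}^{\ast}]=n\,\E_{\mu}[\Lambda_{\mu}^{\ast}]$, using the additive decomposition in the i.i.d.\ case. For a constant $c=c(\mu,\delta)>0$ to be chosen later, set
\[
r_n:=(1-c/\sqrt{n})\,m_n,\qquad s_n:=r_n+\log(2/\delta),\qquad A:=B_{s_n}(\mu_n).
\]
Since $q_{\mu_n}(x)\ls e^{-\Lambda_{\mu_n}^{\ast}(x)}$, we have $\sup_{x\in A^c}q_{\mu_n}(x)\ls e^{-s_n}$, and Lemma~\ref{lem:DFM}\,(a) gives, for every $N\ls e^{r_n}$,
\[
\E_{\mu_n^N}\bigl[\mu_n(K_N)\bigr]\ls \mu_n(A)+N\,e^{-s_n}\ls \mu_n(A)+\tfrac{\delta}{2}.
\]

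It remains to show $\mu_n(A)\ls\delta/2$ for a suitable $c$. Because $m_n$ grows linearly in $n$ (and is strictly positive, $\mu$ being even and nondegenerate), for all $n\gr n_0(\delta,\mu)$ the additive $\log(2/\delta)$ correction is dominated by the $(c/\sqrt{n})m_n$ gap, so $m_n-s_n\gr (c/(2\sqrt{n}))m_n$. Chebyshev's inequality then yields
\[
\mu_n(A)=\mu_n\bigl(\{\Lambda_{\mu_n}^{\ast}\ls s_n\}\bigr)\ls \frac{\mathrm{Var}_{\mu_n}(\Lambda_{\mu_n}^{\ast})}{(m_n-s_n)^2}\ls \frac{4n\,\mathrm{Var}_{\mu_n}(\Lambda_{\mu_n}^{\ast})}{c^2 m_n^2}=\frac{4\,\beta(\mu_n)\,n}{c^2}=\frac{4\beta(\mu)}{c^2},
\]
using the product identity $\beta(\mu_n)=\beta(\mu)/n$ recalled in the preliminaries. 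Since $\mu$ lives on $\R$, the bound $e^{\Lambda_{\mu}^{\ast}}\ls 1/q_{\mu}$ combined with \cite[Proposition~3.2\,(b)]{BC} gives $\beta(\mu)<+\infty$; choosing $c(\mu,\delta):=\sqrt{8\beta(\mu)/\delta}$ then forces $\mu_n(A)\ls\delta/2$. Combining the two estimates gives $\E_{\mu_n^N}[\mu_n(K_N)]\ls\delta$ for every $N\ls e^{r_n}$, which is exactly $\varrho_1(\mu_n,\delta)\gr(1-c/\sqrt{n})\,m_n$.

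The only delicate step is the quantitative balancing of the Chebyshev window against the product scaling: the gap $m_n-s_n$ is of order $c\sqrt{n}\,\E_{\mu}[\Lambda_{\mu}^{\ast}]$, and this is exactly the $\sqrt{n}$ factor needed to cancel the $n$ produced by substituting $\beta(\mu_n)=\beta(\mu)/n$, leaving a bound uniform in $n$. I do not foresee any genuine obstacle: finiteness of $\beta(\mu)$ in dimension one is classical, the threshold $s_n$ lies in the nontrivial interval $(0,m_n)$ for $n\gr n_0(\delta,\mu)$, and the remaining manipulations are routine. The argument runs essentially as in \cite[Theorem~3.1]{Paf}; the additive shift by $\log(2/\delta)$ is included only to render the $Ne^{-s_n}$ remainder an arbitrarily small prescribed fraction of $\delta$, after which both error terms can be absorbed into a single factor $c=c(\mu,\delta)$.
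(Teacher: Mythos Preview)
Your proposal is correct and follows essentially the same approach as the paper, which simply states that the proof is ``identical to the one in \cite[Theorem~3.1]{Paf}'' based on Lemma~\ref{lem:DFM}\,(a). Your reconstruction of that argument---applying Lemma~\ref{lem:DFM}\,(a) with $A=B_{s_n}(\mu_n)$, using $q_{\mu_n}\le e^{-\Lambda_{\mu_n}^\ast}$ for the half-space term, and Chebyshev together with the product identity $\beta(\mu_n)=\beta(\mu)/n$ for the mass term---is exactly the intended route, and all the bookkeeping (the additive shift by $\log(2/\delta)$, the $\sqrt{n}$ cancellation, and the finiteness of $\beta(\mu)$ via \cite[Proposition~3.2\,(b)]{BC}) is carried out correctly.
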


Before we proceed to the lower threshold, we need some definitions and lemmas.

\begin{definition}[Normal and tangent cones] Let $K \subseteq \mathbb{R}^n$ be  convex and closed. For every $x_0 \in {\rm bd}(K)$ we define the normal cone of $K$ at $x_0$ as $$N_K(x_0)= \{v \in \mathbb{R}^n: \langle v,x-x_0 \rangle \leq 0 \text{ for every } x \in K\} $$ and the tangent cone of $K$ at $x_0$ as $$T_K(x_0)=\overline{\{ y \in \mathbb{R}^n: \exists t>0 \text{ such that } x_0+ty\in K\}} .$$
\end{definition}

These two cones are connected in the following way:

\begin{lemma}\label{lem:cones}
    Let $K \subseteq \mathbb{R}^n$ be convex and closed. For every $x_0 \in {\rm bd}(K)$ we have that $N_K(x_0)=\{v \in \mathbb{R}^n : \langle v,z \rangle \leq 0 \text{ for every } z \in T_K(x_0) \}.$
\end{lemma}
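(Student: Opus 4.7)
The plan is to prove both inclusions directly from the definitions, with the only subtlety being the closure operation in the tangent cone, which is handled by continuity of the linear functional $\langle v,\cdot\rangle$. Denote the right-hand side by $M=\{v\in\R^n:\ \langle v,z\rangle\ls 0\text{ for every }z\in T_K(x_0)\}$, and set $S=\{y\in\R^n:\ \exists\,t>0\text{ with }x_0+ty\in K\}$, so that $T_K(x_0)=\overline{S}$.

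For the inclusion $N_K(x_0)\subseteq M$, I would start with $v\in N_K(x_0)$ and first verify the inequality on $S$: if $y\in S$ then $x_0+ty\in K$ for some $t>0$, and by definition of the normal cone $\langle v,(x_0+ty)-x_0\rangle=t\langle v,y\rangle\ls 0$, so $\langle v,y\rangle\ls 0$. Since $z\mapsto\langle v,z\rangle$ is continuous and the half-space $\{\langle v,\cdot\rangle\ls 0\}$ is closed, the inequality persists on $\overline{S}=T_K(x_0)$, giving $v\in M$.

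For the reverse inclusion $M\subseteq N_K(x_0)$, I would take $v\in M$ and any $x\in K$. Set $y=x-x_0$; with $t=1$ we have $x_0+ty=x\in K$, hence $y\in S\subseteq T_K(x_0)$. By definition of $M$ this yields $\langle v,x-x_0\rangle=\langle v,y\rangle\ls 0$, and since $x\in K$ was arbitrary, $v\in N_K(x_0)$. There is essentially no obstacle here: the whole content of the lemma is that the polar relation between $N_K(x_0)$ and the \emph{pre-closure} set $S$ automatically extends to its closure $T_K(x_0)$ because linear functionals are continuous, and conversely that $K-x_0\subseteq S$ by taking $t=1$.
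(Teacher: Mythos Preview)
Your proof is correct and follows essentially the same approach as the paper's: the first inclusion via $\langle v,(x_0+ty)-x_0\rangle\le 0$ on $S$ and passage to the closure, and the reverse inclusion by observing $x-x_0\in S$ for $x\in K$. If anything, your argument is slightly leaner: you take $t=1$ directly (the paper instead invokes convexity to place $x_0+t(x-x_0)\in K$ for $t\in(0,1)$, which is unnecessary), and you phrase the closure step via continuity of $\langle v,\cdot\rangle$ rather than through an explicit sequential argument.
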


\begin{proof}
    Let $v \in N_K(x_0)$ and $z \in T_K(x_0)$. We can find a sequence $(t_k)_{k \in \mathbb{N}}$ of positive real numbers and a sequence $(y_k)_{k \in \mathbb{N}} $ in $\mathbb{R}^n$ such that $y_k \to z$, as $k \to \infty$ and $x_0+t_ky_k \in K$ for every $k \in \mathbb{N}$. Since $v \in N_K(x_0)$ we get $\langle v,x_0+t_ky_k-x_0 \rangle \leq 0$ for every $k \in \mathbb{N}$, which gives $\langle v,y_k \rangle \leq 0 $ for every $k \in\mathbb{N}$. Taking $k \to \infty$, we deduce that $\langle v,z \rangle \leq 0$.

     For the other inclusion, let $v \in \mathbb{R}^n$ such that $\langle v, z \rangle  \leq 0$ for every $z \in T_K(x_0)$. Let $x \in K$ and $t \in (0,1)$. We can write $$x_0+t(x-x_0)=tx+(1-t)x_0 \in K,$$ which means that $x-x_0 \in T_K(x_0)$. We conclude that $\langle v, x-x_0 \rangle \leq 0$, and consequently $v \in N_K(x_0)$.
\end{proof}

In what follows, for any $r>0$ we set $$B_r(\mu_n)= \{x \in \mathbb{R}^n: \Lambda_{\mu_n}^\ast(x) \leq r \} \subseteq [-x^\ast,x^\ast]^n,$$ which is a compact set, by the continuity of $\Lambda_{\mu_n}^\ast$ on $[-x^\ast,x^\ast]^n$. We will use Lemma~\ref{lem:cones} in order to extract a formula for all the supporting hyperplanes of $B_r(\mu_n)$ at points lying on the faces of the cube $[-x^\ast,x^\ast]^n$.

\begin{lemma}\label{lem:hyper-bd} Let $x=(x_1,\ldots,x_n) \in [0,x^{\ast}]^n$ with $0 \leq x_1 \leq\cdots \leq x_{k-1}<x_k=\cdots=x_n=x^{\ast}$ for some $1 \leq k \leq n$. If $\Lambda_{\mu_n}^\ast(x)=r$, then $$N_{B_r(\mu_n)}(x)= \{v \in \mathbb{R}^n :v_1=\cdots=v_{k-1}=0, v_{k},\cdots,v_n \geq 0 \}.$$

\end{lemma}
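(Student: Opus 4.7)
The plan is to first compute the tangent cone $T := T_{B_r(\mu_n)}(x)$ explicitly, and then apply Lemma~\ref{lem:cones} to read off the normal cone as its polar. I expect
$$T = A := \{z \in \mathbb{R}^n : z_i \leq 0 \text{ for all } i \geq k\},$$
from which the desired description of $N_{B_r(\mu_n)}(x)$ follows by testing against $\pm e_i$ for $i < k$ and against $-e_j$ for $j \geq k$.

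The inclusion $T \subseteq A$ is a direct consequence of the domain constraint. Since $\Lambda_{\mu_n}^{\ast} = +\infty$ outside $[-x^{\ast}, x^{\ast}]^n$, we have $B_r(\mu_n) \subseteq [-x^{\ast}, x^{\ast}]^n$. Hence for any $z \in T$ with witnesses $y_m \to z$ and $t_m > 0$ satisfying $x + t_m y_m \in B_r(\mu_n)$, the condition $x^{\ast} + t_m (y_m)_i \leq x^{\ast}$ for $i \geq k$ forces $(y_m)_i \leq 0$, so $z_i \leq 0$ in the limit.

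The reverse inclusion is where the argument gets delicate and is the main obstacle. For $z \in A$ with at least one strict inequality $z_j < 0$ for some $j \geq k$, I would show directly that $x + tz \in B_r(\mu_n)$ for small $t > 0$. The key computation is that for $i < k$, Lemma~\ref{lem:star-prop}(iii) gives
$\Lambda_{\mu}^{\ast}(x_i + tz_i) - \Lambda_{\mu}^{\ast}(x_i) = tz_i h(x_i) + o(t),$
a linear-in-$t$ contribution, while for $i \geq k$ with $z_i < 0$ the monotonicity of $h$ yields
$$\Lambda_{\mu}^{\ast}(x^{\ast} + tz_i) - \Lambda_{\mu}^{\ast}(x^{\ast}) = -\int_0^{-tz_i} h(x^{\ast} - s)\,ds \leq tz_i\, h(x^{\ast} + tz_i).$$
As $t \downarrow 0$ we have $x^{\ast} + tz_i \uparrow x^{\ast}$, so by Lemma~\ref{lem:Lambda-property}, $h(x^{\ast} + tz_i) \to +\infty$, and hence the right-hand side divided by $t$ tends to $-\infty$. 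Summing the two families produces $[\Lambda_{\mu_n}^{\ast}(x + tz) - r]/t \to -\infty$, so $\Lambda_{\mu_n}^{\ast}(x + tz) < r$ for all sufficiently small $t > 0$. For a general $z \in A$, I would approximate by $z - \varepsilon \sum_{i \geq k} e_i \in A$ (now with strict inequality in the last coordinates), apply the previous step, and pass to the limit using closedness of $T$.

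Once $T = A$ is established, Lemma~\ref{lem:cones} gives $N_{B_r(\mu_n)}(x) = \{v : \langle v, z\rangle \leq 0 \text{ for all } z \in A\}$. Taking $z = \pm e_i$ for $i < k$ forces $v_i = 0$; taking $z = -e_j$ for $j \geq k$ forces $v_j \geq 0$; and these two families of constraints are clearly also sufficient, yielding exactly the description claimed. The heart of the argument is the superlinear-in-$t$ decrease of $\Lambda_{\mu}^{\ast}$ when moving inward from $x^{\ast}$, a consequence of $h(y) \to +\infty$ as $y \uparrow x^{\ast}$, which is precisely the analytic reflection of the ``non-smoothness'' of the Cram\'er transform at the boundary that the paper highlights.
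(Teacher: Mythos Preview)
Your proposal is correct and follows essentially the same approach as the paper's proof: both arguments identify the tangent cone as $A=\{z:z_k,\ldots,z_n\le 0\}$ (the inclusion $T\subseteq A$ from the domain constraint $B_r(\mu_n)\subseteq[-x^\ast,x^\ast]^n$, the reverse inclusion by showing that directions with strict inequalities in the last coordinates are feasible thanks to $h(y)\to+\infty$ as $y\uparrow x^\ast$, then taking closures), and then invoke Lemma~\ref{lem:cones} to pass to the normal cone. Your integral estimate $\Lambda_\mu^\ast(x^\ast+tz_i)-\Lambda_\mu^\ast(x^\ast)\le tz_i\,h(x^\ast+tz_i)$ makes the superlinear decrease explicit where the paper simply observes that the relevant difference quotient tends to $+\infty$, but the underlying mechanism is identical.
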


\begin{proof} We will show that $$T_{B_r(\mu_n)}(x)= \{y \in \mathbb{R}^n : y_{k},\ldots,y_n \leq 0 \}$$
and thus by Lemma~\ref{lem:cones} and a simple calculation, the proof is complete.

 To begin with, let $y \in \mathbb{R}^{n}$ with $y_i>0$ for some $k \leq i \leq n$. Then for every $t>0$ we have that $(x+ty)_i>x^{\ast}$, which means that $\Lambda_{\mu}^{\ast}((x+ty)_{i})=+\infty$ and therefore $y \notin \{ y \in \mathbb{R}^n: \exists t>0 \text{ such that } x+ty\in B_r(\mu_n) \}$. So, we have $$\{ y \in \mathbb{R}^n: \exists t>0 \text{ such that } x+ty\in B_r(\mu_n) \} \subseteq \{y \in \mathbb{R}^n : y_{k},\ldots,y_n \leq 0 \}$$
and since the right-hand set is closed we get $$T_{B_r(\mu_n)}(x) \subseteq\{y \in \mathbb{R}^n : y_{k},\ldots,y_n \leq 0 \}.$$
Let now $y \in \mathbb{R}^n$ with $y_{k},\ldots,y_n<0$. We want to find $t>0$ such that $$\Lambda_{\mu_{n}}^{\ast}(x+ty) \leq \Lambda_{\mu_{n}}^{\ast}(x).$$
Equivalently, we need to find some $t>0$ such that:
$$\sum\limits_{i=1}^{k-1} \frac{\Lambda_{\mu}^{\ast}(x_i+ty_i)-\Lambda_{\mu}^{\ast}(x_i)}{t} \leq \sum\limits_{i=k}^{n} \frac{\Lambda_{\mu}^{\ast}(x^{\ast})-\Lambda_{\mu}^{\ast}(x^{\ast}+ty_i)}{t}.$$
But taking limits $t \to 0^{+}$, the left-hand side remains finite and the  right-hand side becomes $+\infty$, by Lemma~\ref{lem:Lambda-property}, the definition of $h$ and Lemma~\ref{lem:star-prop} ${\rm (iii)}$. So, we can find the desired $t>0$. Therefore, we have shown that $$\{y \in \mathbb{R}^n: y_k,\ldots,y_n <0\} \subseteq \{ y \in \mathbb{R}^n: \exists t>0 \text{ such that } x+ty\in B_r(\mu_n) \}.$$
By taking closures we get 
\[
\{y\in\mathbb{R}^n:\ y_k,\ldots,y_n\le 0\}\subseteq T_{B_r(\mu_n)}(x),
\]
which concludes the proof.
\end{proof}

Now, for the lower threshold, the main technical tool is the following theorem, whose proof contains an additional detail, compared to the original argument.

\begin{theorem}
    For every $\zeta >0,$ there exists $n_0(\mu,\zeta)\in\mathbb{N},$ depending only on $\zeta$
and $\mu$, such that for all $r>0$ and all $n\gr n_0(\mu,\zeta)$ we have 
$$\inf_{x\in B_r(\mu_n)}q_{\mu_n }(x)\gr\exp (-(1+\zeta)r-2\zeta n).$$
\end{theorem}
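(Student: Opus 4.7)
The plan is to follow the exponential-tilt strategy of Dyer--F\"uredi--McDiarmid and its refinements in \cite{GaGia, Paf}, using Lemma~\ref{lem:hyper-bd} to supply the missing treatment of the boundary case flagged in the introduction.

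First, I would reduce the problem to a bound at supporting points of $B_r(\mu_n)$. Fix $x \in B_r(\mu_n)$ and $\theta \in S^{n-1}$. Since $B_r(\mu_n)$ is compact (it sits in $[-x^\ast,x^\ast]^n$ and $\Lambda_{\mu_n}^\ast$ extends continuously to the cube), there exists $y_\theta$ maximizing $\langle \cdot, \theta\rangle$ on $B_r(\mu_n)$; then $\theta \in N_{B_r(\mu_n)}(y_\theta)$ and $H_\theta^+(y_\theta) \subseteq H_\theta^+(x)$, so $\mu_n(H_\theta^+(x)) \geq \mu_n(H_\theta^+(y_\theta))$. It suffices to bound $\mu_n(H_\theta^+(y_\theta))$ from below, uniformly in $\theta \in S^{n-1}$. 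Using the symmetry of $\mu$ and permutation invariance of $\mu_n$, we may assume $0 \leq y_\theta^{(1)} \leq \cdots \leq y_\theta^{(k-1)} < y_\theta^{(k)} = \cdots = y_\theta^{(n)} = x^\ast$ for some $k \in \{1, \ldots, n+1\}$, and split into the \emph{smooth case} $k = n+1$ and the \emph{boundary case} $k \leq n$.

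In the smooth case $y_\theta \in (-x^\ast, x^\ast)^n$, so $\Lambda_{\mu_n}^\ast$ is $C^\infty$ at $y_\theta$, $\Lambda_{\mu_n}^\ast(y_\theta) = r$, and $\xi := \nabla \Lambda_{\mu_n}^\ast(y_\theta)$ is the unique outer normal at $y_\theta$ (hence parallel to $\theta$), with $\nabla \Lambda_{\mu_n}(\xi) = y_\theta$. I would introduce the tilted product measure $d\nu = e^{\langle \xi, y\rangle - \Lambda_{\mu_n}(\xi)}\,d\mu_n(y)$, possibly with a slight perturbation $\xi \mapsto (1+\eta)\xi$ to push the tilted mean just past $y_\theta$ into $H_\theta^+(y_\theta)$, and apply the change-of-measure formula restricted to a slab $S_\delta = \{y : 0 \leq \langle \theta, y - y_\theta\rangle \leq \delta\}$. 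Compact support of $\mu$ uniformly controls the tilted variance $\mathrm{Var}_\nu(\langle \theta, Y\rangle) = \sum_i \theta_i^2 \Lambda_\mu''(\xi_i) \leq \mathrm{diam}(\mathrm{supp}\,\mu)^2$, independently of $n$ and $\theta$. A one-sided Chebyshev (Cantelli) bound then lower-bounds $\nu(S_\delta)$ by a positive constant depending only on $\mu$ and $\zeta$, and a careful balance between $\delta$, $\eta$, and the tilt cost (precisely as in the existing arguments in \cite{GaGia, Paf}, which apply verbatim once smoothness of $\Lambda_{\mu_n}^\ast$ at $y_\theta$ is secured) delivers
\[
\mu_n(H_\theta^+(y_\theta)) \geq e^{-(1+\zeta)r - 2\zeta n}
\]
for all $n \geq n_0(\mu, \zeta)$.

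The boundary case $k \leq n$ is exactly the scenario that was glossed over in the original argument, since $\Lambda_{\mu_n}^\ast$ fails to be differentiable at $y_\theta$. Here Lemma~\ref{lem:hyper-bd} gives $\theta \in N_{B_r(\mu_n)}(y_\theta) \subseteq \{v : v_1 = \cdots = v_{k-1} = 0,\ v_k, \ldots, v_n \geq 0\}$. (If instead $\Lambda_{\mu_n}^\ast(y_\theta) < r$, so the lemma's hypothesis is not literally met, the same conclusion follows from the normal cone of the cube alone at $y_\theta$, which has identical structure, since locally near $y_\theta$ the set $B_r(\mu_n)$ coincides with the cube.) Hence $\theta$ vanishes on the ``free'' coordinates and is non-negative on the ``frozen'' ones, so $H_\theta^+(y_\theta)$ constrains only the frozen coordinates; the corner configuration $(y_k, \ldots, y_n) = (x^\ast, \ldots, x^\ast)$ satisfies this constraint with equality, giving
\[
\mu_n(H_\theta^+(y_\theta)) \geq \mu_n\bigl(\{y : y_k = \cdots = y_n = x^\ast\}\bigr) = (p^\ast)^{n-k+1}.
\]
Combining with $r \geq \Lambda_{\mu_n}^\ast(y_\theta) \geq (n-k+1)(-\log p^\ast)$ yields $(p^\ast)^{n-k+1} \geq e^{-r} \geq e^{-(1+\zeta)r - 2\zeta n}$, which is in fact stronger than the claimed bound. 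The main technical obstacle is therefore the uniform slab-mass estimate in the smooth case, which is precisely where compact support of $\mu$ plays a role; the boundary case, previously the source of the gap, is now closed cleanly by the normal-cone identity of Lemma~\ref{lem:hyper-bd}.
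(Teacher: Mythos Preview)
Your proposal is correct and follows essentially the same approach as the paper: reduce to supporting hyperplanes of $B_r(\mu_n)$, handle the smooth interior case by deferring to the tilt argument in \cite{GaGia,Paf}, and close the boundary case via the normal-cone identity of Lemma~\ref{lem:hyper-bd} together with the bound $\mu_n(H^+)\ge (p^\ast)^{n-k+1}\ge e^{-r}$. The only cosmetic difference is that, when $\Lambda_{\mu_n}^\ast(y_\theta)<r$, you invoke the cube's normal cone directly (using that $B_r(\mu_n)$ locally coincides with $[-x^\ast,x^\ast]^n$ there), whereas the paper instead passes to $B_{r'}(\mu_n)$ with $r'=\Lambda_{\mu_n}^\ast(y_\theta)$ so that Lemma~\ref{lem:hyper-bd} applies verbatim.
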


\begin{proof}
Let $x\in B_r(\mu_n)$ and $H_1^+$ be a closed half-space with $x\in H_1$. There exists
$u\in {S^{n-1}}$ such that $H_1^+=\{y\in {\mathbb R}^n:\langle y-x,u\rangle\gr 0\}$. Consider
the function $s:B_r(\mu_n)\to {\mathbb R}$, $s(w)=\langle w, u\rangle$. Since $s$ is continuous and
$B_r(\mu_n)$ is compact, $s$ attains its maximum at some point $z\in B_r(\mu_n)$. Define
$H^+=\{y\in {\mathbb R}^n:\langle y-z,u\rangle\gr 0\}$. Then, $z\in H$ and
for every $w\in B_r(\mu_n)$ we have $\langle w,u\rangle\ls \langle z,u\rangle $, which shows that
$H$ supports $B_r(\mu_n)$ at $z$. Moreover, $H^+\subseteq H_1^+$ and hence
$\mu_n(H^+)\ls \mu_n (H_1^+)$. This shows that $\inf\{q_{\mu_n}(x):x\in B_r(\mu_n)\}$ is attained
for some closed half-space $H^+$ whose bounding hyperplane supports $B_r(\mu_n )$.
Therefore, for the proof of the theorem it suffices to show that given $\zeta >0$ we may find
$n_0(\mu,\zeta )$ so that if $n\gr n_0(\mu,\zeta)$ then
\begin{equation}\label{eq:main}\mu_n(H^+)\gr \exp (-(1+\zeta)r-2\zeta n)\end{equation}
for any closed half-space $H^+$ whose bounding hyperplane supports
$B_r(\mu_n )$.

Let $H^+$ be such a half-space that supports $B_r(\mu_n)$ at some $x=(x_1,\ldots,x_n) \in [-x^{\ast},x^{\ast}]^n$. By symmetry and independence, we can also assume that $x
\in [0,x^{\ast}]^n$. We distinguish two cases:

\noindent \textit{Case 1}: If $x \in [0,x^{\ast})^n$, then $\Lambda_{\mu_n}^{\ast}(x)=r $ and $$H^+=\{ y \in \mathbb{R}^n: \langle \nabla \Lambda_{\mu_n}^\ast(x), y-x \rangle \geq 0 \},$$ since $\Lambda_{\mu_n}^\ast$ is $C^{\infty}$ at $x$ and therefore we have that $N_{B_r(\mu_n )}(x)=\{ \lambda \nabla \Lambda_{\mu_n}^\ast(x): \lambda \geq 0 \}$. The proof in this case goes exactly as in \cite[Theorem 3.3]{Paf}.

\noindent \textit{Case 2}: Assume now, without loss of generality, that there exists $1 \leq k \leq n $ such that $0 \leq x_1 \leq \cdots \leq x_{k-1}<x_k=\cdots=x_n=x^\ast$. Let  $\Lambda_{\mu_n}^\ast(x)=r' \leq r$. Then, since $B_{r'}(\mu_n) \subseteq B_r(\mu_n)$, $H$ also supports $B_{r'}(\mu_n)$ at $x$. So, by Lemma~\ref{lem:hyper-bd} there exist $v_k,\ldots,v_n \geq 0$ such that $$H^+=\{y \in \mathbb{R}^n: \sum\limits_{i=k}^{n}v_i(y_i-x^\ast) \geq 0\}.$$ Recall that $p^\ast=\mu(\{x^\ast\})$. We write 
\begin{align*} \mu_n(H^+) &= \mu_n(\{y \in [-x^\ast,x^\ast]^n: \sum\limits_{i=k}^{n}v_i(y_i-x^\ast) \geq 0\}) 
    \\
    &\geq \mu_n(\{y \in [-x^\ast,x^\ast]^n: y_k=\cdots=y_n=x^\ast \})=(p^\ast)^{n-k+1}
    \\
    &=e^{-(n-k+1)\Lambda_{\mu}^\ast(x^\ast)}= \exp \left(- \sum\limits_{i=k}^{n} \Lambda_{\mu}^\ast(x_i) \right) \geq \exp \left(-(1+\zeta) \sum\limits_{i=1}^{n} \Lambda_{\mu}^\ast(x_i) -2 \zeta n \right)
    \\
    &\geq\exp(-(1+\zeta)r-2\zeta n),
\end{align*} since $\mu_n$ is a product measure and $\Lambda_{\mu}^\ast \geq 0$ .
\end{proof}

We are now able to provide an upper bound for $\varrho_2(\mu_n,\delta)$ using Lemma~\ref{lem:DFM}\,(b) (see also \cite[Theorem 3.9]{Paf}).

\begin{theorem}\label{th:lower-mun}
For any $\delta\in \left(0,\tfrac{1}{2}\right)$ and $\varepsilon\in (0,1)$ we can find $n_0(\mu,\delta,\epsilon )$ such that
$$\varrho_2(\mu_n ,\delta )\ls \left(1+\varepsilon \right)\mathbb{E}_{\mu_n}[\Lambda_{\mu_n}^{*}]$$
for all $n\gr n_0(\mu,\delta,\varepsilon )$.
\end{theorem}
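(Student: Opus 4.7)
The approach is to apply Lemma~\ref{lem:DFM}(b) with $A=B_r(\mu_n)$ and balance the three error terms on the right-hand side of \eqref{eq:DFM-lemma-lower}. Given $\varepsilon\in(0,1)$, write $m_n=\mathbb{E}_{\mu_n}[\Lambda_{\mu_n}^\ast]=n\,\mathbb{E}_\mu[\Lambda_\mu^\ast]$ (by independence) and set $r:=(1+\varepsilon/3)m_n$ and $N:=\lceil\exp((1+\varepsilon)m_n)\rceil$. The goal is to show that for $n$ large each of the three ``bad'' quantities in \eqref{eq:DFM-lemma-lower} is smaller than $\delta/3$.

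For $\mu_n(B_r)$, I would use the fact that $\Lambda_\mu^\ast$ is continuous on the compact interval $[-x^\ast,x^\ast]$ (Lemma~\ref{lem:star-prop}(v)), hence bounded, so $m:=\mathbb{E}_\mu[\Lambda_\mu^\ast]$ and $\sigma^2:=\mathrm{Var}_\mu(\Lambda_\mu^\ast)$ are both finite. Writing $\Lambda_{\mu_n}^\ast(X)=\sum_{i=1}^n\Lambda_\mu^\ast(X_i)$ and applying Chebyshev's inequality,
\[
\mu_n(B_r^c)\ =\ \mathbb{P}\!\left(\sum_{i=1}^n\Lambda_\mu^\ast(X_i)>(1+\varepsilon/3)nm\right)\ \ls\ \frac{9\sigma^2}{\varepsilon^2\,n\,m^2}\ \longrightarrow\ 0.
\]
The second term $\binom{N}{n}p_\mu^{N-n}$ vanishes super-exponentially since $p_\mu<1$ and $N$ is exponential in $n$.

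The delicate term is $\binom{N}{n}\bigl(1-\inf_{x\in B_r}q_{\mu_n}(x)\bigr)^{N-n}$. Here I would invoke the previous theorem with a small parameter $\zeta>0$, giving $\inf_{x\in B_r}q_{\mu_n}(x)\gr e^{-(1+\zeta)r-2\zeta n}$. Using $1-t\ls e^{-t}$ this term is bounded by $\binom{N}{n}\exp\!\bigl(-(N-n)\,e^{-(1+\zeta)r-2\zeta n}\bigr)$. With our choices the exponent $(1+\zeta)r+2\zeta n$ equals $\bigl[(1+\zeta)(1+\varepsilon/3)m+2\zeta\bigr]n$; choosing $\zeta$ so small that $(1+\zeta)(1+\varepsilon/3)m+2\zeta<(1+\varepsilon)m$, the ratio $\log N - \bigl[(1+\zeta)r+2\zeta n\bigr]$ grows linearly in $n$, so $(N-n)e^{-(1+\zeta)r-2\zeta n}$ grows exponentially and dominates the factor $\binom{N}{n}\ls N^n=e^{n(1+\varepsilon)m_n}$.

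Combining the three estimates, $\mathbb{E}_{\mu_n^N}[\mu_n(K_N)]\gr 1-\delta$ for every $N\gr e^{(1+\varepsilon)m_n}$ and every $n\gr n_0(\mu,\delta,\varepsilon)$, which is the claim. The main obstacle is the careful tuning of $\zeta$ in the third step: one needs a strict scale separation between $\log N$ and $(1+\zeta)r+2\zeta n$, and this is what forces $\zeta$ to be taken depending on $\varepsilon$ and on $m=\mathbb{E}_\mu[\Lambda_\mu^\ast]$, using crucially that $m>0$ (which follows from $\mu$ being non-degenerate so that $\Lambda_\mu^\ast\not\equiv 0$).
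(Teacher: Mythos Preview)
Your proposal is correct and follows essentially the same approach as the paper, which only sketches the argument and refers to \cite[Theorem~3.9]{Paf}: apply Lemma~\ref{lem:DFM}(b) with $A=B_r(\mu_n)$, use Chebyshev for $\mu_n(B_r)$, and invoke the preceding half-space depth lower bound with $\zeta$ chosen small in terms of $\varepsilon$ and $m=\mathbb{E}_\mu[\Lambda_\mu^\ast]>0$ to create the required scale separation. The only minor point to make explicit is the passage from the single threshold value $N=\lceil e^{(1+\varepsilon)m_n}\rceil$ to all larger $N$, which follows immediately from the a.s.\ monotonicity $K_N\subseteq K_{N+1}$.
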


Combining Theorem~\ref{th:upper-mun} and Theorem~\ref{th:lower-mun} we get the final result:

\begin{theorem}\label{th:final}For any $\delta\in \left(0,\tfrac{1}{2}\right)$
and any $\varepsilon\in (0,1)$ there exists $n_0(\mu,\delta,\varepsilon)$ such that
$$\varrho_1(\mu_n,\delta)\gr (1-\varepsilon){\mathbb E}_{\mu_n}[\Lambda_{\mu_n}^{\ast}]\quad\hbox{and}\quad
\varrho_2(\mu_n,\delta)\ls (1+\varepsilon){\mathbb E}_{\mu_n}[\Lambda_{\mu_n}^{\ast}]$$
for every $n\gr n_0(\mu,\delta,\varepsilon)$. In particular, $(\mu_n)_{n \in \mathbb{N}}$ exhibits a weak threshold, around $({\mathbb E}_{\mu_n}[\Lambda_{\mu_n}^{\ast}])_{n \in \mathbb{N}}$.
\end{theorem}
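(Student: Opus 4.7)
The plan is to derive Theorem~\ref{th:final} as a direct consequence of the two preceding results, namely Theorem~\ref{th:upper-mun} providing the lower bound on $\varrho_1(\mu_n,\delta)$ and Theorem~\ref{th:lower-mun} providing the upper bound on $\varrho_2(\mu_n,\delta)$. The only genuine step is to absorb the $1/\sqrt{n}$ decay factor appearing in the statement of Theorem~\ref{th:upper-mun} into the prescribed $\varepsilon$ tolerance, after which the weak-threshold formulation of Definition~\ref{def:sharp-threshold} is immediate.

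First, fix $\delta\in(0,\tfrac12)$ and $\varepsilon\in(0,1)$. Theorem~\ref{th:upper-mun} supplies a constant $c(\mu,\delta)>0$ and an integer $n_1(\mu,\delta)$ such that
\[
\varrho_1(\mu_n,\delta)\;\gr\;\left(1-\frac{c(\mu,\delta)}{\sqrt{n}}\right){\mathbb E}_{\mu_n}[\Lambda_{\mu_n}^{\ast}] \qquad \text{for every } n\gr n_1(\mu,\delta).
\]
Since $c(\mu,\delta)/\sqrt{n}\to 0$, I can choose $n_2(\mu,\delta,\varepsilon)\gr n_1(\mu,\delta)$ with $c(\mu,\delta)/\sqrt{n}<\varepsilon$ for all $n\gr n_2$. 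Using that ${\mathbb E}_{\mu_n}[\Lambda_{\mu_n}^{\ast}]\gr 0$, we then obtain the desired bound
\[
\varrho_1(\mu_n,\delta)\;\gr\;(1-\varepsilon)\,{\mathbb E}_{\mu_n}[\Lambda_{\mu_n}^{\ast}]\qquad\text{for every }n\gr n_2(\mu,\delta,\varepsilon).
\]

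Second, Theorem~\ref{th:lower-mun} directly yields an integer $n_3(\mu,\delta,\varepsilon)$ such that $\varrho_2(\mu_n,\delta)\ls(1+\varepsilon)\,{\mathbb E}_{\mu_n}[\Lambda_{\mu_n}^{\ast}]$ for all $n\gr n_3$, already in the correct form. Setting $n_0(\mu,\delta,\varepsilon):=\max\{n_2,n_3\}$, both inequalities hold simultaneously for every $n\gr n_0$, which is exactly the pointwise bound claimed in Theorem~\ref{th:final}.

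Finally, to conclude the weak-threshold statement, I unwind Definition~\ref{def:sharp-threshold} with $T_n={\mathbb E}_{\mu_n}[\Lambda_{\mu_n}^{\ast}]$: for each pair $(\delta,\varepsilon)\in(0,\tfrac12)\times(0,1)$ the integer $n_0(\mu,\delta,\varepsilon)$ constructed above witnesses both required inequalities. For $\delta\in[\tfrac12,1)$ the first range is already covered by monotonicity of $\varrho_1(\mu_n,\cdot)$ and of $\varrho_2(\mu_n,\cdot)$ in $\delta$, so nothing further is needed. There is no real obstacle here; this theorem is a packaging statement, and the only minor subtlety is the step that converts the quantitative rate $1-c(\mu,\delta)/\sqrt{n}$ of Theorem~\ref{th:upper-mun} into the uniform factor $1-\varepsilon$ in Definition~\ref{def:sharp-threshold}.
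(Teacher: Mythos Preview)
Your proposal is correct and matches the paper's approach: the paper itself presents Theorem~\ref{th:final} as an immediate combination of Theorem~\ref{th:upper-mun} and Theorem~\ref{th:lower-mun}, with no additional argument given. Your explicit handling of the absorption of $c(\mu,\delta)/\sqrt{n}$ into $\varepsilon$ and the monotonicity remark for $\delta\in[\tfrac12,1)$ simply spell out details the paper leaves implicit.
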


\subsection{Discrete log-concavity and threshold}

Let now $\mu$  be the law of a symmetric, discrete log-concave random variable $Y$ with p.m.f $(p(k))_{k \in \mathbb{Z}}$. If $Y$ is bounded, Theorem~\ref{th:final} shows that $(\mu_n)_{n \in \mathbb{N}}$ exhibits a weak threshold. Hence we will assume that $Y$ is unbounded, i.e. $x^\ast=+\infty$. In this subsection we provide a characterization of the $\Lambda^\ast$-condition in terms of the p.m.f and, based on this characterization, we derive a threshold for the corresponding discrete log-concave measures.

We set $$S(k)=\mu\left([k,+\infty)\right), \qquad k \in \mathbb{Z}$$ and $$m(x)=-\log \mu\left([x,+\infty)\right), \qquad x \in \mathbb{R}.$$ Notice that $m(k)=-\log S(k)$ for every $k \in \mathbb{Z}$. Moreover, using Markov's inequality we get \begin{equation}\label{eq:m-Lambda}
\Lambda_\mu (\xi) \geq \xi x-m(x),    
\end{equation} for every $x>0$ and every $\xi \geq 0$.
We also set $$r_k= \frac{p(k+1)}{p(k)}, \qquad k \in \mathbb{Z}.$$ By log-concavity we have that $(r_k)_{k \in \mathbb{Z}}$ is a non-increasing sequence. We define $$r^\ast \coloneq\lim_{k \to \infty}r_k=\lim_{k \to \infty} \frac{p(k+1)}{p(k)} \in [0,1).$$ In the next lemma, we relate $r^\ast$ to the convergence interval of the m.g.f $$J_\mu=\{\xi \in \mathbb{R}:\phi_\mu(\xi) <+\infty\}.$$

\begin{lemma}\label{lem:mgf-conv}
Setting $\xi^\ast=-\log r^\ast$, with the convention $\log 0=-\infty$,  we have that $J_\mu=(-\xi^\ast,\xi^\ast)$ and $\lim_{\xi \to \xi^\ast} \Lambda_\mu(\xi)=+\infty$.
\end{lemma}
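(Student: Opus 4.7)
The plan is to analyze the series $\phi_\mu(\xi)=\sum_{k\in\Z}p(k)e^{\xi k}$ by the ratio test, using only symmetry of $\mu$ and monotonicity of the ratios $(r_k)$ provided by discrete log-concavity. First, by symmetry $p(-k)=p(k)$, so $\phi_\mu(\xi)<+\infty$ if and only if $\phi_\mu(-\xi)<+\infty$; hence $J_\mu$ is symmetric about $0$ and it suffices to decide convergence of the one-sided series $\sum_{k\geq 0}p(k)e^{\xi k}$. Since the support is an unbounded symmetric discrete interval we have $p(0)>0$, and $r_k=p(k+1)/p(k)$ is well-defined for every $k\geq 0$. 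The ratio of successive terms is $r_k e^\xi\to r^\ast e^\xi$, so the ratio test yields convergence for $\xi<\xi^\ast$ and divergence for $\xi>\xi^\ast$.

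Next I would handle the boundary value $\xi=\xi^\ast$ and the blow-up of $\Lambda_\mu$ simultaneously, from the single estimate
\[
p(k)\geq p(0)(r^\ast)^k,\qquad k\geq 0,
\]
which follows by iterating $p(k+1)\geq r^\ast p(k)$ (valid because $(r_k)$ is non-increasing with limit $r^\ast$, so $r_k\geq r^\ast$ for every $k$). When $r^\ast>0$ (so $\xi^\ast<+\infty$), substitution at $\xi=\xi^\ast$ gives $p(k)e^{\xi^\ast k}\geq p(0)$, the general term does not vanish, and the series diverges; together with the ratio test this rules out $\pm\xi^\ast\in J_\mu$ and pins down $J_\mu=(-\xi^\ast,\xi^\ast)$. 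For $0\leq\xi<\xi^\ast$ the same bound yields
\[
\phi_\mu(\xi)\geq p(0)\sum_{k\geq 0}(r^\ast e^\xi)^k=\frac{p(0)}{1-r^\ast e^\xi}\longrightarrow+\infty
\]
as $\xi\uparrow\xi^\ast$, hence $\Lambda_\mu(\xi)\to+\infty$.

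If instead $r^\ast=0$ (so $\xi^\ast=+\infty$ by convention), the ratio test already gives convergence for every $\xi\in\R$, whence $J_\mu=\R=(-\xi^\ast,\xi^\ast)$; for the divergence of $\Lambda_\mu$ I would note that $p(1)>0$ (the support is a symmetric unbounded discrete interval containing $0$, hence equals $\Z$), so $\phi_\mu(\xi)\geq p(1)e^\xi\to+\infty$ as $\xi\to+\infty$. The only delicate point is the endpoint $\xi=\xi^\ast$, where the ratio test is silent; this is precisely where the discrete-log-concavity input (monotonicity of $r_k$, hence $r_k\geq r^\ast$) is essential, providing the uniform lower bound $p(k)\geq p(0)(r^\ast)^k$ that drives both the exclusion of $\xi^\ast$ from $J_\mu$ and the divergence of $\Lambda_\mu$ at the boundary.
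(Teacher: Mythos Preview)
Your proof is correct and follows essentially the same approach as the paper: both use the log-concavity-induced bounds $p(k)\leq Cq^k$ (for any $q>r^\ast$) and $p(k)\geq p(0)(r^\ast)^k$ to decide convergence, you just package the first via the ratio test. Your blow-up arguments are mildly different---you use the explicit geometric-series lower bound $\phi_\mu(\xi)\geq p(0)/(1-r^\ast e^\xi)$ in place of the paper's monotone convergence, and the single-term bound $\phi_\mu(\xi)\geq p(1)e^\xi$ in place of the paper's Markov inequality $\Lambda_\mu(\xi)\geq \xi x - m(x)$ when $r^\ast=0$---but these are cosmetic simplifications of the same idea.
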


\begin{proof}
    We will deal only with the right endpoint, as $\mu$ is even. Let $q>r^\ast$. Then, there exists $C>0$ such that $p(k) \leq C q^k$, for every $k \in \mathbb{N}$. It is straightforward to see that $\phi_\mu(\xi) < \infty$ for every $0 \leq \xi <-\log q$. Thus, we can deduce that $\phi_\mu(\xi) < \infty$ for every $0 \leq \xi < -\log r^\ast$.

     If $r^\ast=0$, then the first part of the proof is done. So, suppose that $0<r^\ast<1$. Then, $p(k) \geq p(0)(r^{\ast})^k$ for every $k=0,1,2,\ldots$
     
       As a result, we have 
$$\phi_\mu(-\log r^\ast) \geq \sum\limits_{k=0}^{\infty} p(k) \frac{1}{(r^\ast)^k}=+\infty.$$ Now, if $0 \leq \xi_n \uparrow -\log r^\ast$, then by the monotone convergence theorem we have $$\phi_\mu(\xi_n) \geq \sum\limits_{k=0}^{\infty} p(k) e^{k \xi_n} \Big\uparrow \sum\limits_{k=0}^{\infty} p(k) \frac{1}{(r^\ast)^k}=+\infty,$$ as $n \to \infty$.

         So, it remains to show the second part of the lemma, when $r^\ast=0$. Recall the inequality $$\Lambda_\mu(\xi) \geq \xi x -m(x),$$ for every $x>0$ and $\xi \geq 0$. Letting $\xi \to +\infty$ concludes the proof. 
\end{proof}

The previous lemma implies that Lemma~\ref{lem:Lambda-property} holds for the discrete log-concave measure $\mu$ of this subsection (for a proof see \cite[Lemma 2.3 {\rm (ii)} and {\rm (iii)}]{Paf}). So, we can now define $h:(-x^\ast,x^\ast) \to \mathbb{R}$ by $h=(\Lambda_{\mu}')^{-1}$, which is a strictly increasing and $C^{\infty}$ function. As a result, Lemma~\ref{lem:star-prop} {\rm (i)}-{\rm (iv)} hold true for~$\mu$.

For this method to yield a threshold, the $\Lambda^\ast$-condition is necessary, i.e. $$\lim_{x \to x^\ast } \frac{-\log\mu([x,+\infty))}{\Lambda_{\mu}^\ast (x)}=1.$$ We will now characterize the $\Lambda^\ast$-condition in terms of $(p(k))_{k \in \mathbb{Z}}$. Let $p(k)=e^{-g(k)}$ for every $k \in \mathbb{Z}$.

\begin{theorem}\label{thm:Lambda-star-condition}
    $\mu$ satisfies the $\Lambda^\ast$-condition if and only if $m(k+1)/m(k) \to 1$, as $k \to \infty $, and hence if and only if $$\lim_{k \to \infty}\frac{g(k+1)}{g(k)}=1.$$
\end{theorem}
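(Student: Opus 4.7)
The plan has three ingredients: reducing the two p.m.f.--level conditions to one, handling the easy implication using convexity of $\Lambda_\mu^\ast$, and proving the harder implication via a variational lower bound on $\Lambda_\mu^\ast$.

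First I would deduce the second stated equivalence, $m(k+1)/m(k)\to 1 \iff g(k+1)/g(k)\to 1$, via the asymptotic $m(k)-g(k)=O(1)$. Log-concavity forces $r_k=p(k+1)/p(k)$ to be non-increasing, with $r_k\to r^\ast\in[0,1)$. For $k$ large enough that $r_k<1$, the geometric bound yields $p(k)\le S(k)\le p(k)/(1-r_k)$, hence $0\le m(k)-g(k)\le -\log(1-r_k)=O(1)$. Since $\delta(k)=g(k+1)-g(k)=-\log r_k\to \xi^\ast=-\log r^\ast>0$, one has $g(k)\to\infty$, so $m(k)/g(k)\to 1$ and the two ratio-to-one conditions coincide.

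For the implication $\Lambda^\ast$-condition $\Rightarrow m(k+1)/m(k)\to 1$: since $\mu$ is supported on $\mathbb{Z}$, the tail function $m$ is constant equal to $m(k+1)$ on $(k,k+1]$, while $\Lambda_\mu^\ast$ is smooth and convex on $(-x^\ast,x^\ast)=\mathbb{R}$ by Lemma \ref{lem:star-prop}. Evaluating at $x=k+\tfrac{1}{2}$, convexity and the general Markov bound $\Lambda_\mu^\ast\le m$ (from \eqref{eq:m-Lambda}) give
\[
\Lambda_\mu^\ast\bigl(k+\tfrac{1}{2}\bigr)\ \le\ \tfrac{1}{2}\bigl(\Lambda_\mu^\ast(k)+\Lambda_\mu^\ast(k+1)\bigr)\ \le\ \tfrac{1}{2}\bigl(m(k)+m(k+1)\bigr).
\]
Combined with the $\Lambda^\ast$-condition at $x=k+\tfrac{1}{2}$, which gives $\Lambda_\mu^\ast(k+\tfrac12)\ge(1-\varepsilon)m(k+1)$ for $k$ large, one obtains $(1-2\varepsilon)m(k+1)\le m(k)$, i.e.\ $m(k+1)/m(k)\le 1/(1-2\varepsilon)$. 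Since $m$ is non-decreasing, letting $\varepsilon\to 0$ yields $m(k+1)/m(k)\to 1$.

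For the converse, $g(k+1)/g(k)\to 1 \Rightarrow \Lambda^\ast$-condition: I split on the value of $r^\ast$. If $r^\ast>0$, Ces\`aro applied to $\log r_j\to \log r^\ast$ gives $g(k)/k\to\xi^\ast$, hence $m(x)/x\to\xi^\ast$; on the other hand, $\Lambda_\mu^\ast(x)/x\to\xi^\ast$ is a general Cram\'er-transform fact (from $(\Lambda_\mu^\ast)'(x)=h(x)\to\xi^\ast$ by Lemma \ref{lem:Lambda-property}). Both sides being asymptotic to $\xi^\ast x$, their ratio tends to $1$ and the hypothesis is not even used. If $r^\ast=0$, the content lies in showing $\Lambda_\mu^\ast(k)/g(k)\to 1$, for which I use the variational bound $\Lambda_\mu^\ast(k)\ge \xi k-\Lambda_\mu(\xi)$ with $\xi=\xi_k$ chosen inside the interval $[\delta(k-1),\delta(k)]$ of "discrete subgradients of $g$ at $k$" (nonempty since log-concavity makes $\delta$ non-decreasing). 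Convexity of $g$ then gives
\[
\sum_j e^{\xi_k j-g(j)}\ \le\ e^{\xi_k k-g(k)}\cdot C_k,
\]
where $C_k$ bounds the effective number of integers near the peak; under the slow-growth hypothesis $\delta(k)=o(g(k))$, an averaging over a suitable window yields $\log C_k=o(g(k))$, so $\Lambda_\mu^\ast(k)\ge g(k)-o(g(k))$. Combined with $\Lambda_\mu^\ast(k)\le m(k)\sim g(k)$, this gives $\Lambda_\mu^\ast(k)/m(k)\to 1$. To pass from integers to arbitrary $x$, use $m(x)=m(\lceil x\rceil)$, the convex sandwich $\Lambda_\mu^\ast(\lfloor x\rfloor)\le \Lambda_\mu^\ast(x)\le \Lambda_\mu^\ast(\lceil x\rceil)$ on the increasing branch, and the hypothesis $m(k+1)/m(k)\to 1$ to conclude $\Lambda_\mu^\ast(x)/m(x)\to 1$.

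The main obstacle is the lower bound on $\Lambda_\mu^\ast(k)$ in the $r^\ast=0$ case. The delicate point is that when $g$ is locally affine over a long window (so $\delta(k)-\delta(k-1)$ vanishes and the naive midpoint $\xi_k$ degenerates), one must take $\xi_k$ based on a larger window and bound the effective width $C_k$ of terms near the peak. The slow-growth hypothesis $g(k+1)/g(k)\to 1$ is precisely what controls the correction $\log C_k$ to be $o(g(k))$, thereby converting the variational inequality into the sharp asymptotic equivalence $\Lambda_\mu^\ast(k)\sim g(k)\sim m(k)$.
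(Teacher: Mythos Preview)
Your treatment of the first two steps matches the paper. The equivalence $m(k+1)/m(k)\to 1 \iff g(k+1)/g(k)\to 1$ is the content of Lemma~\ref{lem:m-asymptotics}, and your proof of the implication $\Lambda^\ast$-condition $\Rightarrow m(k+1)/m(k)\to 1$ via convexity at $k+\tfrac12$ is essentially the paper's argument (the paper phrases it through an abstract convex $V$ with $m\sim V$, but under the $\Lambda^\ast$-condition $\Lambda_\mu^\ast$ is such a $V$, so the computations coincide).

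For the forward implication $g(k+1)/g(k)\to 1 \Rightarrow \Lambda^\ast$-condition you take a genuinely different route. The paper exhibits the piecewise-affine convex extension $\tilde g$ of $g$, verifies in two lines that $m(x)/\tilde g(x)\to 1$ on $\mathbb{R}$ (using $m(x)=m(k{+}1)$ on $(k,k{+}1]$, monotonicity of $\tilde g$, and the hypothesis), and then invokes Theorem~\ref{thm:L*-cond-char} from \cite{BC}: the existence of \emph{any} convex $V$ with $m\sim V$ forces the $\Lambda^\ast$-condition. You instead attempt to bound $\Lambda_\mu^\ast$ from below directly, which would make the argument self-contained. Your $r^\ast>0$ case is correct and clean (and you are right that the hypothesis is automatic there, since $g(k)\sim\xi^\ast k$).

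The gap is your $r^\ast=0$ case. You assert that $\log C_k=o(g(k))$ ``via averaging over a suitable window'', but do not prove it, and the naive choice fails. If $g$ has long affine stretches --- say $g$ is affine with slope $s_i$ on $[a_i,a_{i+1}]$ and one arranges $\log(a_{i+1}-a_i)\asymp g(a_i)$ (this is compatible with $g(k{+}1)/g(k)\to 1$; e.g.\ take $s_i=\log i$ and $a_{i+1}-a_i=e^{g(a_i)}$) --- then for $k$ near $a_i$ the only subgradient is $\xi_k=s_i$, the sum $\sum_j e^{\xi_k j-g(j)}$ has $\approx a_{i+1}-a_i$ terms equal to the maximum, and $\log C_k\approx g(a_i)\approx g(k)$: the bound $\Lambda_\mu^\ast(k)\ge g(k)-\log C_k$ gives nothing. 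A repaired choice such as $\xi_k=s_{i-1}+\varepsilon$ introduces two competing errors, a $\log(1/\varepsilon)$ term and a linear loss $(s_i-s_{i-1}-\varepsilon)(k-a_i)$, and to make both $o(g(k))$ uniformly in $k$ along the stretch one needs an additional argument (for the second term this amounts to controlling $(s_i-s_{i-1})/s_i$, which is not immediate from the hypothesis). None of this is supplied. The paper sidesteps all of it by appealing to Theorem~\ref{thm:L*-cond-char}, which absorbs exactly this difficulty; if you want to keep your self-contained route, you must fill in the $\log C_k=o(g(k))$ step rigorously, including the long-affine-window scenario.
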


We first need a lemma that compares the asymptotic behaviour of $g(k)$ and $m(k)$.

\begin{lemma}\label{lem:m-asymptotics}
We have that $S(k) \sim p(k)/(1-r^\ast)$ and consequently $m(k) \sim g(k)$ as $k \to \infty$.    
\end{lemma}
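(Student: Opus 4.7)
The plan is to exploit discrete log-concavity in the most direct way: express the tail sum $S(k)=\sum_{j\ge k}p(j)$ as a telescoping product of ratios $r_i=p(i+1)/p(i)$, and then sandwich that product using the monotonicity and convergence of $(r_i)$.

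First I would write, for $j\ge k$,
\[
p(j)=p(k)\prod_{i=k}^{j-1}r_i,
\]
so that
\[
\frac{S(k)}{p(k)}=1+\sum_{j\ge 1}\prod_{i=0}^{j-1}r_{k+i}.
\]
By log-concavity the sequence $(r_i)$ is non-increasing, and by definition $r_i\downarrow r^\ast\in[0,1)$. Monotonicity gives $r_i\ge r^\ast$ for every $i$, hence $\prod_{i=0}^{j-1}r_{k+i}\ge (r^\ast)^j$, and so
\[
\frac{S(k)}{p(k)}\ge \sum_{j\ge 0}(r^\ast)^j=\frac{1}{1-r^\ast}.
\]
For the matching upper bound, fix $\varepsilon>0$ small enough that $r^\ast+\varepsilon<1$, choose $k_0$ with $r_i\le r^\ast+\varepsilon$ for $i\ge k_0$, and then for $k\ge k_0$ one has $\prod_{i=0}^{j-1}r_{k+i}\le (r^\ast+\varepsilon)^j$, giving
\[
\frac{S(k)}{p(k)}\le \frac{1}{1-r^\ast-\varepsilon}.
\]
Letting $\varepsilon\to 0$ yields $S(k)/p(k)\to 1/(1-r^\ast)$, which is the first claim.

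For the second statement, taking logarithms in the asymptotic $S(k)\sim p(k)/(1-r^\ast)$ gives
\[
m(k)-g(k)=\log\!\left(\frac{p(k)}{S(k)}\right)\longrightarrow \log(1-r^\ast)
\]
as $k\to\infty$. Since $\mu$ is a probability measure on an unbounded support, $p(k)\to 0$, hence $g(k)\to+\infty$. Dividing the previous display by $g(k)$ gives $m(k)/g(k)\to 1$, which is the desired equivalence.

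The only mild subtlety is treating the case $r^\ast=0$ cleanly (the upper sandwich uses $r^\ast+\varepsilon<1$, which holds automatically here, and the lower bound $S(k)\ge p(k)$ is trivial), but no real obstacle arises; the whole proof reduces to an elementary geometric-series sandwich enabled by the monotonicity of the ratios $r_i$ that log-concavity provides.
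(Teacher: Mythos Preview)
Your proof is correct and follows essentially the same approach as the paper: both write $S(k)/p(k)$ as a sum of products of ratios and sandwich by geometric series using the monotonicity $r_i\downarrow r^\ast$. The only cosmetic difference is that the paper uses $r_k$ itself as the upper-bound parameter (giving $S(k)\le p(k)/(1-r_k)$ directly), whereas you pass through an auxiliary $\varepsilon$; the log step is likewise the same idea.
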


\begin{proof}
If $k \in \mathbb{N}$ we write $$S(k)=\mathbb{P}(Y \geq k)=\sum_{m=k}^{+\infty}p(m)=p(k) \sum_{m=k}^{+\infty} \prod_{\ell=k}^{m-1}r_\ell.$$
Since $r_k \downarrow r^\ast$, as $k \to \infty$, we have $$(r^\ast)^{m-k} \leq \prod_{\ell=k}^{m-1}r_\ell \leq r_k^{m-k},$$ which yields $$\frac{p(k)}{1-r^\ast} \leq S(k) \leq \frac{p(k)}{1-r_k}$$ and thus $$1-\frac{1}{g(k)} \log\left(\frac{1}{1-r_k}\right) \leq \frac{m(k)}{g(k)} \leq 1-\frac{1}{g(k)} \log\left(\frac{1}{1-r^\ast}\right).$$ Letting $k \to \infty$ concludes the proof, since $g(k) \to +\infty$, as $k \to \infty$.
\end{proof}

We will also need a result from \cite{BC}: 
\begin{theorem}\label{thm:L*-cond-char}
Let $\mu$ be a probability measure on $\mathbb{R}$ and let $x^\ast\in(-\infty,+\infty]$ denote the right endpoint of $\mathrm{supp}(\mu)$. Then, 
\[
\lim_{x\to x^\ast} \frac{-\log\mu([x,\infty))}{\Lambda_\mu^\ast(x)}=1
\]
holds if and only if there exists a convex function $V(x)$ such that $\lim_{x\to x^\ast}\frac{-\log\mu([x,\infty))}{V(x)}=1$.
\end{theorem}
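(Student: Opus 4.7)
The plan is to establish the principal equivalence by means of Theorem~\ref{thm:L*-cond-char}; the ``hence'' clause is immediate from Lemma~\ref{lem:m-asymptotics}, since $m(k)\sim g(k)$ forces $m(k+1)/m(k)\to 1 \iff g(k+1)/g(k)\to 1$. As a preliminary one verifies that the tail $S(k)=\mu([k,+\infty))$ is log-concave: the identity $S(k-1)S(k+1)-S(k)^2 = S(k)(p(k-1)-p(k))-p(k-1)p(k)$ is clearly non-positive when $p(k-1)\le p(k)$, while otherwise $r_k<1$ and one uses $S(k)\le p(k)/(1-r_k)$ together with the monotonicity $r_{k-1}\ge r_k$ to reach the same conclusion. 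Hence $m=-\log S$ is convex on $\mathbb{Z}$, its piecewise linear extension $V$ of $m|_{\mathbb{Z}}$ is convex on $\mathbb{R}$, and, because $\mu$ is atomic on $\mathbb{Z}$, $m$ is a step function on $\mathbb{R}$ with $m(x)=m(k+1)$ for every $x\in(k,k+1]$; in particular $m(k+\tfrac{1}{2})=m(k+1)$.

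For the $(\Leftarrow)$ direction, assuming $m(k+1)/m(k)\to 1$, I would show that this $V$ already witnesses the hypothesis of Theorem~\ref{thm:L*-cond-char}. For $x=k+t$ with $t\in(0,1]$,
\[
\frac{m(x)}{V(x)} = \frac{m(k+1)}{(1-t)m(k)+t\,m(k+1)} = \frac{1}{(1-t)\,m(k)/m(k+1)+t}\ \longrightarrow\ 1
\]
uniformly in $t\in[0,1]$ as $k\to\infty$ (with equality at integer points). Thus $V$ is a convex function with $-\log\mu([x,+\infty))/V(x)\to 1$, and Theorem~\ref{thm:L*-cond-char} delivers the $\Lambda^\ast$-condition.

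For the $(\Rightarrow)$ direction, the $\Lambda^\ast$-condition and Theorem~\ref{thm:L*-cond-char} supply a convex $W$ with $m(x)/W(x)\to 1$; in particular $W$ is finite and continuous on a right neighborhood of $+\infty$. The decisive step is to evaluate $W$ at the three points $k,\,k+\tfrac{1}{2},\,k+1$: using the step-function identity $m(k+\tfrac{1}{2})=m(k+1)$, we obtain the three asymptotics $W(k)\sim m(k)$, $W(k+\tfrac{1}{2})\sim m(k+1)$, $W(k+1)\sim m(k+1)$. Midpoint convexity $W(k+\tfrac{1}{2})\le \tfrac{1}{2}(W(k)+W(k+1))$ combined with these asymptotics then yields $m(k+1)\le \tfrac{1}{2}(m(k)+m(k+1))(1+o(1))$, i.e.\ $m(k+1)-m(k)=o(m(k+1))$, and the fact that $m$ is non-decreasing forces $m(k+1)/m(k)\to 1$.

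The main obstacle is precisely the $(\Rightarrow)$ step. A generic convex $W$ with $W\sim m$ does \emph{not} satisfy $W(k+1)/W(k)\to 1$ (take $W(x)=e^x$), so one cannot transfer ratios from $W$ to $m$ in the elementary way. The resolution exploits a feature special to atomic $\mu$: probing $W$ at a half-integer, where the step function $m$ has already jumped up to $m(k+1)$, combined with midpoint convexity of $W$, pinches $m(k+1)$ by the average of $m(k)$ and $m(k+1)$. Without this step-function feature the equivalence would simply fail.
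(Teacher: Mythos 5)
Your proposal does not prove the stated result. The statement you were given is Theorem~\ref{thm:L*-cond-char}, a general one-dimensional fact about arbitrary probability measures on $\mathbb{R}$: the $\Lambda^\ast$-condition holds if and only if \emph{some} convex $V$ satisfies $-\log\mu([x,\infty))/V(x)\to 1$. Your very first sentence invokes Theorem~\ref{thm:L*-cond-char} as a tool (``establish the principal equivalence by means of Theorem~\ref{thm:L*-cond-char}''), which would be circular. What you have actually written is a proof of a \emph{different} result, namely the paper's Theorem~\ref{thm:Lambda-star-condition}, which characterizes the $\Lambda^\ast$-condition for discrete log-concave measures on $\mathbb{Z}$ in terms of the ratio $m(k+1)/m(k)\to 1$ and uses Theorem~\ref{thm:L*-cond-char} as a black box. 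The tell-tale signs are your ``hence clause'' remark (matching the phrasing of Theorem~\ref{thm:Lambda-star-condition}), your reliance on $\mu$ being atomic on $\mathbb{Z}$ so that $m$ is a step function, and your reduction to the piecewise-linear interpolant $V$ of $m|_{\mathbb{Z}}$ --- none of which make sense for the general $\mu$ of Theorem~\ref{thm:L*-cond-char}. Also note that the paper itself does not prove Theorem~\ref{thm:L*-cond-char}; it is quoted as a result from \cite{BC}, so there is no in-paper proof to recover.

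As a proof of Theorem~\ref{thm:Lambda-star-condition}, your argument is essentially on target and does overlap with what the paper actually does there. Your $(\Rightarrow)$ direction --- probing the convex witness $W$ at $k+\tfrac12$, where the step function $m$ has already jumped to $m(k+1)$, and using midpoint convexity to pinch $m(k+1)$ between $m(k)$ and $m(k+1)$ --- is precisely the paper's contradiction argument. Your $(\Leftarrow)$ direction uses the piecewise-linear interpolant of $m$ itself as the convex witness, which requires your preliminary tail log-concavity check $S(k)^2\ge S(k-1)S(k+1)$; the paper instead uses the piecewise-linear extension $\tilde g$ of $g=-\log p$ (which is convex by Definition~\ref{def:one-dimension} with no extra lemma) together with Lemma~\ref{lem:m-asymptotics}, which is slightly more economical but in the same spirit. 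In short: correct proof, wrong theorem.

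If you want to actually prove Theorem~\ref{thm:L*-cond-char}, the $(\Rightarrow)$ direction is immediate (take $V=\Lambda_\mu^\ast$, which is always convex). The substance is the $(\Leftarrow)$ direction: given any convex $V$ with $-\log\mu([x,\infty))\sim V(x)$, one must show $-\log\mu([x,\infty))\sim\Lambda_\mu^\ast(x)$ as $x\to x^\ast$. That requires genuinely comparing $\Lambda_\mu^\ast$ to the tail function via Chernoff-type lower bounds and a convex-duality argument; none of the machinery in your proposal addresses this.
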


\begin{proof}[Proof of Theorem~\ref{thm:Lambda-star-condition}]
We first assume that $g(k+1)/g(k) \to 1$, as $k \to \infty$. By the proof of Proposition~\ref{prop:equivalence}, we can extend $g$ to some $\tilde{g}: \mathbb{R} \to [0,+\infty)$, which is piecewise affine and convex. Notice that since $p$ is non-increasing on $\mathbb{N}$, then $\tilde{g}$ is non-decreasing on $[1,+\infty)$ by its construction. Let now $k \in \mathbb{N}$ and $x \in (k,k+1]$. Observe that $$\frac{m(x)}{\tilde{g}(x)}=\frac{m(k+1)}{\tilde{g}(x)}=\frac{m(k+1)}{g(k+1)}\frac{g(k+1)}{\tilde{g}(x)}.$$ Since $\tilde{g}$ is non-decreasing we get $$\frac{m(k+1)}{g(k+1)} \leq \frac{m(x)}{\tilde{g}(x)}\leq \frac{m(k+1)}{g(k+1)}{\frac{g(k+1)}{g(k)}}.$$ The assumption, Lemma~\ref{lem:m-asymptotics} and Theorem~\ref{thm:L*-cond-char} yield the $\Lambda^\ast$-property, since $\tilde{g}$ is convex.

Suppose now that there exists a convex function $V:\mathbb{R} \to \mathbb{R}$ such that $m(x) \sim V(x)$, as $x \to \infty$. Since $m(k+1)>m(k)$, if $m(k+1)/m(k) \nrightarrow 1$, by passing to a subsequence, we can assume that there exists $L>1$ such that $m(k+1)/m(k) \geq L$ for every $k \in \mathbb{N}$. Set $$\varepsilon =\frac{1}{4}\left(1-\frac{1}{L}\right)>0.$$ Then we can find $x_0>0$ such that $$V(x) \leq \frac{m(x)}{1-\varepsilon},$$ for every $x \geq x_0$. Let now $k \in \mathbb{N}$ with $k \geq x_0$. By the convexity of $V$ we have $$\frac{m\left(k+\frac{1}{2}\right)}{V\left(k+\frac{1}{2}\right)} \geq \frac{2m(k+1)}{V(k)+V(k+1)} \geq \frac{2(1-\varepsilon)}{1+\frac{m(k)}{m(k+1)}} \geq \frac{2(1-\varepsilon)}{1+\frac{1}{L}}=\frac{\frac{3}{2}+\frac{1}{2L}}{1+\frac{1}{L}}>1,$$ which is a contradiction, since $m(x) \sim V(x)$ as $x \to \infty$.
\end{proof}

\begin{remark}
    Theorem~\ref{thm:Lambda-star-condition} stands in contrast to the continuous case, where the $\Lambda^\ast$-condition holds true for every log-concave probability measure on $\mathbb{R}$. The last fact is a direct consequence of Theorem~\ref{thm:L*-cond-char}. We also mention that while an inequality of the form $$\Lambda_{\mu}^\ast(x) \geq (1-\varepsilon) \log\frac{1}{q_{\mu}(x)}+C(\varepsilon)$$  holds true for every $\varepsilon \in (0,1)$, for every discrete log-concave probability measure $\mu$ on $\mathbb{R}$ and for every $x \in {\rm supp}(\mu)$ (which is the correct statement of \cite[Theorem 3.4]{BC}), it cannot be extended to every $x \in \conv({\rm supp}(\mu))$, as it would imply the $\Lambda^\ast$-condition for every discrete log-concave probability measure on $\mathbb{R}$.
\end{remark}

\begin{remark}
    We note that Lemma~\ref{lem:mgf-conv}, Lemma~\ref{lem:m-asymptotics} and Theorem~\ref{thm:Lambda-star-condition} do not essentially use the symmetry of $Y$. Corresponding results can be stated for any discrete log-concave random variable which is not upper/lower bounded.

\end{remark}

So, we can now formulate our theorem regarding the threshold. 

\begin{theorem}
Let $\mu$ be an even, unbounded, discrete log-concave probability measure on $\mathbb{R}^n$ with p.m.f $p(k)=e^{-g(k)}$ for every $k \in \mathbb{Z}$. If $g(k+1)/g(k) \to 1$ as $k \to \infty$, then $(\mu_n)_{n \in \mathbb{N}}$ exhibits a weak threshold around $({\mathbb E}_{\mu_n}[\Lambda_{\mu_n}^{\ast}])_{n \in \mathbb{N}}$.
\end{theorem}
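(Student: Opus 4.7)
The plan is to reduce the claim to the framework already developed in the previous subsection, exploiting the simplification that the troublesome non-smooth boundary case does not arise when $x^\ast=+\infty$. Since $\mu$ is unbounded, Lemma~\ref{lem:mgf-conv} gives that $\Lambda_\mu$ is smooth on its convergence interval $J_\mu=(-\xi^\ast,\xi^\ast)$ with $\lim_{\xi\to\pm\xi^\ast}\Lambda_\mu(\xi)=+\infty$, and by the cited version of Lemma~\ref{lem:Lambda-property} the derivative $\Lambda_\mu':J_\mu\to\R$ is a strictly increasing bijection. The analogs of Lemma~\ref{lem:star-prop}(i)--(iv) then give that $\Lambda_\mu^\ast$ is a non-negative, strictly convex $C^\infty$ function on $\R$ with unique zero at the origin; by tensorization, $\Lambda_{\mu_n}^\ast$ inherits these properties on $\R^n$ and each sub-level set $B_r(\mu_n)$ is compact. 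Next, Theorem~\ref{thm:Lambda-star-condition} converts the hypothesis $g(k+1)/g(k)\to1$ into the $\Lambda^\ast$-condition $-\log\mu([x,+\infty))\sim\Lambda_\mu^\ast(x)$ as $x\to+\infty$, with the symmetric statement at $-\infty$ holding by evenness. Finally, $\E_{\mu_n}[\Lambda_{\mu_n}^\ast]$ is finite by Theorem~\ref{thm:lambda-star-moments} together with Proposition~\ref{prop:one-dim-integr}, so the candidate threshold scale makes sense.

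For the upper threshold, Theorem~\ref{th:upper-mun} is stated for arbitrary even Borel probability measures on $\R$ and applies verbatim, giving
\[
\varrho_1(\mu_n,\delta)\gr\bigl(1-c(\mu,\delta)/\sqrt{n}\bigr)\,\E_{\mu_n}[\Lambda_{\mu_n}^\ast]
\]
for $n\gr n_0(\mu,\delta)$.

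For the lower threshold, the task is the half-space mass estimate $\inf_{x\in B_r(\mu_n)}q_{\mu_n}(x)\gr\exp(-(1+\zeta)r-2\zeta n)$. The crucial observation is that, with $x^\ast=+\infty$, the boundary-contact ``Case~2'' of the preceding subsection is vacuous: every boundary point $x$ of $B_r(\mu_n)$ satisfies $\Lambda_{\mu_n}^\ast(x)=r$ at a point where $\Lambda_{\mu_n}^\ast$ is smooth and strictly convex, so the normal cone is the single ray generated by $\nabla\Lambda_{\mu_n}^\ast(x)$ and the supporting half-space has the smooth representation $H^+=\{y:\langle\nabla\Lambda_{\mu_n}^\ast(x),y-x\rangle\gr0\}$. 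This places us squarely in ``Case~1'' of the previous proof: one argues coordinate-by-coordinate using the product structure of $\mu_n$, invoking the one-dimensional $\Lambda^\ast$-condition to lower-bound each tail $\mu([x_i,+\infty))$ (after reflection if needed) by $\exp(-(1+\zeta)\Lambda_\mu^\ast(x_i))$ for indices with $|x_i|$ large enough, while the additive $2\zeta n$ absorbs the small-coordinate contributions. This is exactly the argument of \cite[Theorem~3.3]{Paf}.

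Plugging the resulting half-space estimate into Lemma~\ref{lem:DFM}(b) yields the analog of Theorem~\ref{th:lower-mun}, namely $\varrho_2(\mu_n,\delta)\ls(1+\varepsilon)\E_{\mu_n}[\Lambda_{\mu_n}^\ast]$ for $n\gr n_0(\mu,\delta,\varepsilon)$; combined with the upper bound this is the weak threshold. The substantive content here is the implication $g(k+1)/g(k)\to1\Rightarrow\Lambda^\ast\text{-condition}$ provided by Theorem~\ref{thm:Lambda-star-condition}; modulo that input the unbounded case is in fact easier than the bounded-with-endpoint-mass case treated in the previous subsection, since the gap in the DFM argument flagged in the introduction---non-unique supporting hyperplanes at boundary faces of the cube---cannot occur here.
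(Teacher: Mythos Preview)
Your proposal is correct and follows essentially the same route as the paper's proof: invoke Theorem~\ref{th:upper-mun} for the upper threshold (valid for any even Borel $\mu$), use Theorem~\ref{thm:Lambda-star-condition} to convert the hypothesis $g(k+1)/g(k)\to 1$ into the $\Lambda^\ast$-condition, and then appeal to the lower-threshold machinery of \cite{Paf} under that condition. The paper's own proof is a two-line reduction to Theorem~\ref{th:upper-mun} and Theorem~\ref{th:lower-mun} (citing \cite[Theorem~3.9]{Paf}); your version expands on why the lower-threshold argument goes through---namely that with $x^\ast=+\infty$ the boundary-contact ``Case~2'' of the half-space lemma cannot occur, so only the smooth ``Case~1'' is needed---which is a correct and helpful elaboration the paper leaves implicit in its citation.
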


\begin{proof}
    It is a direct consequence of Theorem~\ref{th:upper-mun}, which holds true for any even Borel probability measure $\mu$ on $\mathbb{R}$ and of Theorem~\ref{th:lower-mun}, which holds true for $\mu$ under the $\Lambda^\ast$-condition (for a complete proof see \cite[Theorem 3.9]{Paf}).
\end{proof}

\section{Sharp threshold in lattice settings}
We begin with the prototypical \emph{discrete cube} model, where $\mu_n$ is the uniform probability measure on
$\mathcal V_n=\{0,1\}^n$ and $K_N=\conv\{X_1,\dots,X_N\}$ for i.i.d.\ samples $X_i\sim\mu_n$.
In contrast to approaches that focus only on locating the \emph{phase transition} scale of $N$,
here we compute the entire expectation curve
\[
F_{n,N}(\mu_n)=\E_{\mu_n^N}\,[\mu_n(K_N)]
\]
\emph{exactly}, for every $N$ and $n$.
The key geometric observation is that $K_N$ creates no new cube vertices:
a vertex belongs to the random convex hull if and only if it was sampled, so $F_{n,N}(\mu_n)$ reduces to the coupon--collector
expectation of the number of distinct observed vertices. This yields a closed formula and, as immediate corollaries,
all asymptotic regimes (subcritical, critical, supercritical) in a unified way.

We then turn to the \emph{lattice $p$-ball} $L_{n,r,p}=\Z^n\cap rB_p^n$ equipped with the uniform probability measure $\mu_{n,r,p}$.
This example is qualitatively new compared to the product-type/discrete-cube setting:
even though the samples $X_1,\dots,X_N$ are i.i.d.\ under $\mu_{n,r,p}$, the distribution $\mu_{n,r,p}$ itself
does \emph{not} exhibit coordinate independence (the constraint $\|X\|_p\le r$ couples the coordinates),
so one cannot rely on independence-based arguments that are available for product measures.
Instead, we analyze the normalized lattice-point functional
\[
F_{n,N}(\mu_{n,r,p})=\E_{\mu_{n,r,p}^N}\Big[\frac{|K_N\cap \Z^n|}{|L_{n,r,p}|}\Big]
\]
via a geometric--combinatorial \emph{coupon--collector sandwich}. We embed inside $L_{n,r,p}$ a large structured layer $A_{n,r,p}$,
whose cardinality 
occupies an asymptotically full
proportion of $L_{n,r,p}$ . The lattice points captured by the
random polytope are then sandwiched between the number of distinct sampled points and the
same quantity augmented by the (rare) lattice points missed inside $A_{n,r,p}$ together with the
negligible complement $L_n\setminus A_n$. Consequently, $F_{n,N}(\mu_{n,r,p})$ exhibits a sharp-threshold behaviour at polynomial scale in $n$.

For both cases we will need the next lemma, which is closely related to the coupon-collector problem.

\begin{lemma}\label{lem:coupon-collector}
    Let $X_1,\ldots,X_N$ be i.i.d.\ random variables uniformly distributed on
$\{v_1,\ldots,v_M\}$. If $D_N= | \{X_1,\ldots,X_N\}|$, then $$\E[D_N]=M(1-(1-1/M)^N).$$
\end{lemma}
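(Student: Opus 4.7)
The plan is to prove the formula by the standard indicator--variable decomposition, which is the cleanest route for the coupon-collector expectation.

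First, for each $i \in \{1,\ldots,M\}$ I would introduce the Bernoulli random variable
\[
Y_i = \mathds{1}_{\{v_i \in \{X_1,\ldots,X_N\}\}},
\]
which records whether $v_i$ has appeared in the sample. Then by construction the number of distinct observed values satisfies $D_N = \sum_{i=1}^M Y_i$. Linearity of expectation reduces the problem to computing $\E[Y_i] = \Prob(v_i \in \{X_1,\ldots,X_N\})$ for a single $i$.

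Next, I would pass to the complement: $v_i$ is absent from the sample precisely when none of the $X_j$ equals $v_i$. Since the $X_j$ are i.i.d.\ uniform on a set of size $M$, each $X_j$ avoids $v_i$ independently with probability $1-1/M$, so
\[
\Prob(v_i \notin \{X_1,\ldots,X_N\}) = \left(1-\tfrac{1}{M}\right)^N,
\]
and hence $\E[Y_i] = 1-(1-1/M)^N$. This value does not depend on $i$ (by symmetry of the uniform law), so summing over $i=1,\ldots,M$ yields
\[
\E[D_N] = \sum_{i=1}^M \E[Y_i] = M\left(1 - \left(1-\tfrac{1}{M}\right)^N\right),
\]
which is the claimed identity.

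There is no real obstacle here: the only subtlety worth stating explicitly is the independence of the events $\{X_j \neq v_i\}$ across $j$, which is where the i.i.d.\ hypothesis is used. Note that the $Y_i$'s themselves are \emph{not} independent (they are negatively correlated), but this is irrelevant for the expectation computation thanks to linearity.
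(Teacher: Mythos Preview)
Your proof is correct and follows exactly the same approach as the paper's own argument: introduce the indicators $Y_i=\mathds{1}_{\{v_i\in\{X_1,\ldots,X_N\}\}}$, observe $D_N=\sum_{i=1}^M Y_i$, and compute $\E[Y_i]=1-(1-1/M)^N$ via the complement. The paper's proof is more terse, but the content is identical.
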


\begin{proof}
    For every $1 \leq i \leq M$ we set $Y_i=\mathds{1}_{\{v_i \in \{X_1,\ldots,X_N\} \}}$. Observe that $Y_i \sim \text{Bernoulli}(p)$, where $p=1-(1-1/M)^N$. Notice also that $D_N=\sum_{i=1}^M Y_i$ and we have the proof.
\end{proof}

\subsection{Discrete hypercube $\{0,1\}^n$: complete analysis}

Now let $\mu_n$ be the \emph{uniform } probability on the vertex set
$\mathcal{V}_n:=\{0,1\}^n$; i.e., $\mu_n(\{v\})=2^{-n}$ for all $v\in\mathcal{V}_n$.
Draw $X_1,\dots,X_N$ i.i.d.\ from $\mu_n$  and set $K_N=\conv\{X_1,\dots,X_N\}$. First we will show that the convex hull cannot contain any vertices other than the ones that were sampled.

\begin{lemma}\label{lem:no-new-verts}
Let $S\subseteq\mathcal{V}_n$. Then
\[
\conv(S)\cap \mathcal{V}_n\ =\ S.
\]
Equivalently, a cube vertex $v\in\mathcal{V}_n$ belongs to $\conv(S)$ \emph{if and only if} $v\in S$.
\end{lemma}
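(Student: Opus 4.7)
The plan is to establish the two inclusions separately. The forward inclusion $S \subseteq \conv(S) \cap \mathcal{V}_n$ is immediate, since $S \subseteq \mathcal{V}_n$ and $S \subseteq \conv(S)$. The content of the lemma is the reverse inclusion: a cube vertex that lies in $\conv(S)$ must actually be an element of $S$.

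For the reverse inclusion, I would argue coordinatewise. Suppose $v \in \mathcal{V}_n$ admits a representation $v=\sum_{i=1}^{m}\lambda_i s_i$ with $s_i\in S$, $\lambda_i\ge 0$, and $\sum_i \lambda_i = 1$. Fix any coordinate $j\in\{1,\dots,n\}$. Then $v_j=\sum_i \lambda_i (s_i)_j$, where $v_j\in\{0,1\}$ and $(s_i)_j\in\{0,1\}$. If $v_j=0$, then $\sum_i\lambda_i(s_i)_j=0$, which forces $(s_i)_j=0$ for every $i$ with $\lambda_i>0$. If $v_j=1$, then $\sum_i\lambda_i(s_i)_j=1$; since each $(s_i)_j\le 1$ and the $\lambda_i$ sum to $1$, we must have $(s_i)_j=1$ for every $i$ with $\lambda_i>0$. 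Thus every active $s_i$ agrees with $v$ in coordinate $j$, and since $j$ was arbitrary, $s_i=v$ for every $i$ in the support of $\lambda$. In particular, $v=s_i\in S$.

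A conceptually cleaner way to package the same argument is to note that every $v\in\mathcal V_n$ is an extreme point of the cube $[0,1]^n$; since $\conv(S)\subseteq[0,1]^n$, any extreme point of $[0,1]^n$ that lies in $\conv(S)$ must be an extreme point of $\conv(S)$ and therefore an element of $S$. I would, however, prefer the coordinatewise proof above because it is elementary and self-contained, avoiding appeals to extremality.

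There is no real obstacle here: the only subtle point is the passage ``$\sum_i \lambda_i (s_i)_j = 1$ with $(s_i)_j\in\{0,1\}$ and $\sum_i\lambda_i=1$ forces $(s_i)_j=1$ on the support of $\lambda$,'' which is the standard observation that the maximum of a convex combination of $0/1$ values equals $1$ only when all participating values are $1$. The proof is therefore essentially a two-line verification, and the lemma is really a recording of the fact that the vertices of $[0,1]^n$ are affinely independent in the sense needed for the coupon-collector reduction of $F_{n,N}(\mu_n)$ that follows.
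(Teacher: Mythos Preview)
Your proof is correct and follows essentially the same coordinatewise argument as the paper: write $v=\sum_i \lambda_i s_i$, note that $v_j\in\{0,1\}$ forces $(s_i)_j=v_j$ for every $i$ with $\lambda_i>0$, and conclude $s_i=v\in S$. The extra extreme-point remark is a sound alternative but not needed.
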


\begin{proof}
If $v\in \conv(S) \cap \mathcal{V}_n$ then $v=\sum_{x\in S}\lambda_x x$ with $\lambda_x\ge 0$, $\sum_{x \in S}\lambda_x=1$.
For any coordinate $i$ with $v_i=1$ we have $1=v_i=\sum_{x\in S}\lambda_x x_i$, which forces $x_i=1$ for all $x$ with $\lambda_x>0$.
Similarly, for $v_i=0$ all such $x$ satisfy $x_i=0$. Hence, every $x$ with $\lambda_x>0$ must be equal to $v$ coordinatewise, i.e.\ $x=v$.
Thus $v\in S$. The converse is trivial.
\end{proof}

Let $D_N:=|\{X_1,\dots,X_N\}|$ be the number of \emph{distinct} vertices observed among the $N$ draws.
By Lemma~\ref{lem:no-new-verts},

\begin{equation}\label{eq:cube-measure}
    \mu_n(K_N)\ =\ \frac{D_N}{2^n}.
\end{equation}

Since draws are i.i.d.\ over a finite set of size $M:=2^n$, by Lemma~\ref{lem:coupon-collector} we have
\begin{equation}\label{eq:cube-cc}
\E_{\mu_n^N} [D_N]\ =\ 2^n\left(1-\Big(1-\frac{1}{2^n}\Big)^N\right).
\end{equation}
Combining \eqref{eq:cube-measure} and \eqref{eq:cube-cc} we get:

\begin{theorem}\label{thm:cube-exact}
For every $N,n \in \mathbb{N}$ we have
\begin{equation}\label{eq:cube-exact}
F_{n,N}(\mu_n)\ =\ \E_{\mu_n^N}\,[\mu_n(K_N)]
\ =\ 1-\Big(1-2^{-n}\Big)^{N}.
\end{equation}
\end{theorem}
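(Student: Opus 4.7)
The plan is to assemble the theorem directly from the two preparatory lemmas that precede it, exploiting the fact that $\mu_n$ is a purely atomic measure concentrated on the vertex set $\mathcal V_n=\{0,1\}^n$. Since $\mu_n$ assigns mass $2^{-n}$ to each cube vertex and mass zero to every set disjoint from $\mathcal V_n$, for any Borel $B\subseteq\mathbb R^n$ one has $\mu_n(B)=2^{-n}|B\cap \mathcal V_n|$. Applied with $B=K_N$, this turns the expected captured mass into an expected counting functional, and the whole problem reduces to computing how many cube vertices lie in $K_N$.

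First I would invoke Lemma~\ref{lem:no-new-verts} with $S=\{X_1,\dots,X_N\}$ to obtain the key identification $K_N\cap\mathcal V_n=\{X_1,\dots,X_N\}$; geometrically, the random polytope cannot manufacture new lattice cube vertices. Combined with the preceding remark, this yields the deterministic identity $\mu_n(K_N)=D_N/2^n$ recorded in \eqref{eq:cube-measure}, where $D_N$ is the number of distinct sampled points.

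Next I would take expectations on both sides of this identity. Because the samples $X_1,\dots,X_N$ are i.i.d.\ uniform on the finite set $\mathcal V_n$ of cardinality $M=2^n$, Lemma~\ref{lem:coupon-collector} applies verbatim and delivers
\[
\E_{\mu_n^N}[D_N]\;=\;2^n\Bigl(1-\bigl(1-2^{-n}\bigr)^N\Bigr),
\]
which is precisely \eqref{eq:cube-cc}. Dividing by $2^n$ produces the claimed closed-form expression for $F_{n,N}(\mu_n)$.

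There is no genuine obstacle here: the argument is essentially a packaging of the two lemmas. The only conceptual content lies in Lemma~\ref{lem:no-new-verts}, whose proof is already in hand and exploits the fact that each coordinate of a point in $\mathcal V_n$ is an extreme value ($0$ or $1$) of its coordinate projection, so any convex representation must be trivial. The coupon-collector step is a routine indicator-sum computation once one notices that the event $\{v_i\in\{X_1,\dots,X_N\}\}$ is Bernoulli with parameter $1-(1-1/M)^N$.
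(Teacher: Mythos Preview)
Your proposal is correct and follows exactly the paper's own argument: the paper derives \eqref{eq:cube-measure} from Lemma~\ref{lem:no-new-verts}, derives \eqref{eq:cube-cc} from Lemma~\ref{lem:coupon-collector} with $M=2^n$, and then states the theorem as the immediate combination of these two identities. There is nothing to add or correct.
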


\begin{corollary}\label{cor:cube-regimes}
\leavevmode
\begin{enumerate}

\item[{\rm (i)}] If $N=o(2^n)$, then $F_{n,N}(\mu_n)\to 0$, as $n \to \infty$.
\item[{\rm (ii)}] If $N\sim \alpha  2^n$, as $n \to \infty$, for some fixed $\alpha>0$, then $F_{n,N}(\mu_n)\to 1-e^{-\alpha}\in(0,1)$, as $n \to \infty$.
\item[{\rm (iii)}] If $N=\omega(2^n)$, then $F_{n,N}(\mu_n)\to 1$, as $n \to \infty$.

\end{enumerate}
In particular, $(\mu_n)_{n \in \mathbb{N}}$ exhibits a sharp threshold around $(T_n)_{n \in \mathbb{N}}=(n\log2)_{n \in \mathbb{N}} $.
\end{corollary}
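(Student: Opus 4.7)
The plan is to read everything directly off the exact formula
\[ F_{n,N}(\mu_n) = 1 - (1 - 2^{-n})^N \]
from Theorem~\ref{thm:cube-exact}, combined with the elementary two-sided bound
\[ 1 - e^{-N/2^n}\ \ls\ 1 - (1 - 2^{-n})^N\ \ls\ \frac{N}{2^n}, \]
which follows from $(1-x) \ls e^{-x}$ on the left and Bernoulli's inequality on the right. From these inequalities the three regimes are immediate: for (i), $F_{n,N}(\mu_n) \ls N/2^n \to 0$ whenever $N = o(2^n)$; for (iii), $F_{n,N}(\mu_n) \gr 1 - e^{-N/2^n} \to 1$ whenever $N = \omega(2^n)$; and for (ii), the Taylor expansion $\log(1 - 2^{-n}) = -2^{-n} + O(2^{-2n})$ gives $N \log(1 - 2^{-n}) \to -\alpha$ whenever $N \sim \alpha 2^n$, so that $F_{n,N}(\mu_n) \to 1 - e^{-\alpha}$.

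For the sharp-threshold statement I take $T_n := n \log 2$, so that $e^{T_n} = 2^n$, and the single sequence $\delta_n := 1/n$. Fix $\varepsilon \in (0,1)$. If $N \ls e^{(1-\varepsilon) T_n} = 2^{n(1-\varepsilon)}$, the right-hand bound gives $F_{n,N}(\mu_n) \ls 2^{-n\varepsilon} \ls 1/n$ for $n$ large enough depending on $\varepsilon$, and hence $\varrho_1(\mu_n, \delta_n) \gr (1-\varepsilon) T_n$. If $N \gr e^{(1+\varepsilon) T_n} = 2^{n(1+\varepsilon)}$, the left-hand bound gives $1 - F_{n,N}(\mu_n) \ls e^{-2^{n\varepsilon}} \ls 1/n$ for $n$ large, and hence $\varrho_2(\mu_n, \delta_n) \ls (1+\varepsilon) T_n$. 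Both inequalities therefore hold for $n \gr n_0(\varepsilon)$, with $\delta_n \to 0$, which matches Definition~\ref{def:sharp-threshold} at scale $(T_n) = (n \log 2)_{n \in \mathbb{N}}$.

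There is no substantive obstacle: the exact formula of Theorem~\ref{thm:cube-exact} bypasses the half-space depth estimates, union bounds, and Cram\'er-transform comparisons that are needed for the abstract threshold arguments elsewhere in the paper. The only small point worth flagging is that a \emph{single} sequence $\delta_n$ must witness the threshold for \emph{every} $\varepsilon$; this is why I choose $\delta_n$ of slow polynomial decay rather than a geometric one, since any decay slower than $2^{-n\varepsilon}$ (in a fixed reference $\varepsilon$) is automatically dominated on the subcritical side by $2^{-n\varepsilon}$ and on the supercritical side by $e^{-2^{n\varepsilon}}$. In fact this shows that the transition from near $0$ to near $1$ is doubly-exponentially sharp around $T_n = n\log 2$, a much stronger form of concentration than the definition requires.
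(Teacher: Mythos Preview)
Your proof is correct and follows essentially the same route as the paper: both arguments read everything off the exact formula $F_{n,N}(\mu_n)=1-(1-2^{-n})^N$ via elementary two-sided bounds on $(1-x)^N$, and then verify Definition~\ref{def:sharp-threshold} with a polynomially decaying $\delta_n$. The only cosmetic differences are that you use Bernoulli's inequality for the upper bound (yielding $F_{n,N}\le N/2^n$) where the paper uses $(1-x)^N\ge e^{-Nx/(1-x)}$, and you take $\delta_n=1/n$ while the paper takes $\delta_n=1/n^b$; neither affects the argument.
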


\begin{proof} Claims {\rm (i)-(iii)} follow from \eqref{eq:cube-exact} and the inequality 
    $$\exp\left(-\frac{Nx}{1-x}\right) \leq (1-x)^N \leq \exp(-Nx)
$$ for every $N \in \mathbb{N}$ and every $x \in (0,1)$, which yield

\begin{equation}\label{eq:cube-threshold}
    1-\exp\left(-\frac{N}{2^n}\right) \leq F_{n,N}(\mu_n) \leq 1-\exp\left(-\frac{N}{2^n-1}\right).
\end{equation}
    Now for the threshold, let $\varepsilon>0$ and $\delta_n=1/n^b$ for every $n \in \mathbb{N}$, for some fixed (but arbitrarily large) $b>0$ . If $N \leq e^{(1-\varepsilon)n\log 2}=2^{(1-\varepsilon)n}$, the right-hand side of \eqref{eq:cube-threshold} gives $$F_{n,N}(\mu_n) \leq 1-\exp\left(-2^{-n \varepsilon/2}\right) \leq 2^{-n\varepsilon/2}< \delta_n ,$$ for every $n \geq n_1(\varepsilon)$, which implies \begin{equation}\label{eq:cube-rho1}
        \varrho_1(\mu_n,\delta_n) \geq (1-\varepsilon)n \log 2.
    \end{equation} On the other hand,   if  $N \geq e^{(1+\varepsilon)n\log 2}=2^{(1+\varepsilon)n}$, the left-hand side of \eqref{eq:cube-threshold} yields $$F_{n,N}(\mu_n) \geq 1-\exp\left(-2^{n\varepsilon}\right)> 1-\delta_n$$ for $n \geq n_2(\varepsilon)$, which means that \begin{equation}\label{eq:cube-rho2}      
    \varrho_2(\mu_n,\delta_n) \leq (1+\varepsilon)n\log 2. \end{equation}
    Thus, \eqref{eq:cube-rho1} and \eqref{eq:cube-rho2} complete the proof.
\end{proof}

  \begin{remark}
    If $\mu$ is the uniform probability measure on $\{0,1\}$, direct computation (see \eqref{eq:expectation-bernoulli} below) shows that $\mathbb{E}_{\mu}[\Lambda_\mu^\ast]=\log 2$ . So, the sharp threshold in Corollary~\ref{cor:cube-regimes} is again around $(\mathbb{E}_{\mu_n}[\Lambda_{\mu_n}^\ast])_{n \in \mathbb{N}}$.
\end{remark}

\subsection{Lattice points in $\ell_p$-ball}

This subsection is devoted to the proof of Theorem~\ref{thm:lattice-ball-threshold}.
Fix $r,p \geq 1$ and set $k=\lfloor r^p \rfloor$. For simplicity, we drop the parameters $r,p$ from the notation. Set now
\[
L_n:=\Z ^n\cap rB_p^n,\qquad M_n:=|L_n|.
\]
Let $\mu_n$ be the uniform probability measure on $L_n$ and $X_1,\dots,X_N$ be i.i.d with law $\mu_n$. Set  $K_N:=\conv\{X_1,\dots,X_N\}$.
We consider the normalized lattice-point functional
\[
F_{n,N}(\mu_n):=\E_{\mu_n^N}\Big[\frac{|K_N\cap\Z^n|}{|L_n|}\Big]
=\E_{\mu_n^N}\Big[\frac{|K_N\cap L_n|}{M_n}\Big].
\]

\medskip

Let $D_N:=|\{X_1,\dots,X_N\}|$ be the number of distinct sampled points.
Then we always have 
\begin{equation}\label{eq:ball-lower}
    D_N\le |K_N\cap L_n|.
\end{equation}
Set now $A_n:=\{x\in\{-1,0,1\}^n:\ \|x\|_0=k\}$. The cardinality of this set  is\begin{equation}\label{eq:An-cardinality}
|A_n|=2^k\binom{n}{k}.
\end{equation}
Observe that $A_n \subseteq L_n$, as for every $x \in A_n$ we have $\|x\|_p^p=k \leq r^p$.

Let us also consider the event $$G_N=(A_n \cap K_N)\setminus\{X_1,\ldots,X_N\}. $$ With this notation we have 
\begin{equation}\label{eq:ball-upper}
    |K_N\cap L_n|\leq D_N + |G_N|+|L_n\setminus A_n|.
\end{equation}

Combining \eqref{eq:ball-lower} and \eqref{eq:ball-upper}, dividing by $M_n$, taking expectations and using Lemma~~\ref{lem:coupon-collector}, we get

\begin{equation}\label{eq:ball-sandwich}
1-\Big(1-\frac{1}{M_n}\Big)^N
\ \le\
F_{n,N}(\mu_n)
\ \le\
1-\Big(1-\frac{1}{M_n}\Big)^N +C_n+P_{n,N},
\end{equation}
where $$C_n=\frac{|L_n \setminus A_n|}{M_n}, \qquad P_{n,N}=\frac{\mathbb{E}_{\mu_n^N}[|G_N|]}{M_n}.$$
For the inequality \eqref{eq:ball-sandwich} to be likely to yield a threshold, we must ensure that $C_n, P_{n,N} \to 0$, as $n \to \infty$ and for appropriate selection of $N$. To achieve this, we need to estimate the quantities involved. We begin with $M_n=|L_n|=|\mathbb{Z}^n \cap rB_p^n|$ and $C_n$.

\begin{lemma}\label{lem:Mn-Cn-asympt}
We have $$|L_n \setminus A_n|=O(n^{k-1})$$ and
\[
M_n = 2^k\binom{n}{k} + O(n^{k-1}).
\] Therefore, $$M_n \sim \frac{2^k}{k!}n^k, \qquad \log M_n \sim k \log n,$$ as $n \to \infty$, and $C_n=O(1/n)$ .
\end{lemma}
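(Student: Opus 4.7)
The plan is to reduce the estimate on $|L_n\setminus A_n|$ to a purely combinatorial lattice count, by pinning down the structure of integer points in $rB_p^n$. First I would observe that every $x\in L_n$ satisfies $\|x\|_0\le k$: if $\|x\|_0=\ell$, then each nonzero integer coordinate contributes at least $1$ to $\|x\|_p^p$, so $\|x\|_p^p\ge\ell$, forcing $\ell\le\lfloor r^p\rfloor=k$.

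The crucial structural step is to show that any $x\in L_n$ with $\|x\|_0=k$ already lies in $A_n$, i.e., all of its nonzero entries equal $\pm 1$. Indeed, if some nonzero coordinate satisfies $|x_j|\ge 2$, the other $k-1$ nonzero coordinates still contribute at least $k-1$, so
\[
\|x\|_p^p\ \ge\ (k-1)+2^p\ \ge\ k+1\ >\ r^p,
\]
using $2^p\ge 2$ for $p\ge 1$ and $r^p<k+1$ by the definition of $k$. This contradicts $x\in L_n$ and yields the inclusion
\[
L_n\setminus A_n\ \subseteq\ \{x\in\Z^n:\ \|x\|_0\le k-1,\ \|x\|_\infty\le\lfloor r\rfloor\}.
\]

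I would then bound the right-hand side combinatorially: choose the set of $\ell\le k-1$ nonzero coordinates and assign to each a value in $\{\pm 1,\ldots,\pm\lfloor r\rfloor\}$, giving
\[
|L_n\setminus A_n|\ \le\ \sum_{\ell=0}^{k-1}\binom{n}{\ell}(2\lfloor r\rfloor)^\ell\ =\ O(n^{k-1}),
\]
with implicit constant depending only on $r$ and $p$. Combined with $|A_n|=2^k\binom{n}{k}$ from \eqref{eq:An-cardinality} and the partition $M_n=|A_n|+|L_n\setminus A_n|$, this yields $M_n=2^k\binom{n}{k}+O(n^{k-1})$. Since $\binom{n}{k}\sim n^k/k!$ as $n\to\infty$, the asymptotics $M_n\sim (2^k/k!)n^k$ and $\log M_n\sim k\log n$ follow, and dividing gives $C_n=O(n^{k-1})/\Theta(n^k)=O(1/n)$.

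The only delicate point is the structural step $\|x\|_0=k\Rightarrow x\in A_n$, which rests essentially on $p\ge 1$ through $2^p\ge 2$; for $p<1$ this would fail and genuinely larger saturated configurations would appear, destroying the clean leading term $2^k\binom{n}{k}$. Everything else is a routine polynomial comparison of leading terms, so I do not expect further obstacles.
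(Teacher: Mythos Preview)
Your proposal is correct and follows essentially the same route as the paper: both arguments show that any $x\in L_n$ with $\|x\|_0\ge k$ must have all nonzero entries equal to $\pm1$ via the bound $\|x\|_p^p\ge 2^p+(k-1)\ge k+1>r^p$, then crudely count the remaining points with $\|x\|_0\le k-1$ by $\sum_{\ell\le k-1}\binom{n}{\ell}(2\lfloor r\rfloor)^\ell$. Your write-up is in fact slightly cleaner in that you separate out the preliminary bound $\|x\|_0\le k$ for all $x\in L_n$ before the structural step; the paper leaves this implicit.
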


\begin{proof}

Write $$L_n= A_n \sqcup (L_n \setminus A_n).$$
We claim that $L_n\setminus A_n$ contains only vectors with at most $k-1$ non-zero coordinates.
Indeed, if $\|x\|_0 \geq k$ and some coordinate satisfies $|x_i|\ge 2$, then
\[
\|x\|_p^p \ge 2^p + (k-1)\cdot 1  \geq k+1 > r^p,
\]
which means that $x \notin L_n$. Thus, any $x\in L_n\setminus A_n$ has $\|x\|_0\le k-1$.

For each $1 \leq j\le k-1$, the number of integer vectors with $\|x\|_0=j$ and $\|x\|_p\le r$ is at most
$\binom{n}{j}(2\lfloor r\rfloor)^j$. Summing over $1 \leq j\le k-1$ gives
$|L_n\setminus A_n|=O(n^{k-1})$. Indeed, for each fixed $j$ we have $\binom{n}{j}\le n^j/j!$, hence
\[
\sum_{j=0}^{k-1}\binom{n}{j}(2\lfloor r\rfloor)^j
\le \sum_{j=0}^{k-1}\frac{(2\lfloor r\rfloor)^j}{j!}\,n^j
\le \Big(\sum_{j=0}^{k-1}\frac{(2\lfloor r\rfloor)^j}{j!}\Big)\,n^{k-1} \leq e^{2\lfloor r \rfloor} n^{k-1}.
\]
Since $r\geq1$ is fixed, $|L_n\setminus A_n|=O(n^{k-1})$.
 Combining with \eqref{eq:An-cardinality} concludes the proof.
\end{proof}

We must now estimate $P_{n,N}$. We will need the following lemma.

\begin{lemma}\label{lem:l1-supporting}
Let $x \in A_n$.
\begin{enumerate}
\item[{\rm (i)}] For  every $y \in L_n$ we have that $\langle x,y \rangle \leq k$.

\item[{\rm (ii)}] There exists a set $F_x \subseteq L_n$  such that $$|F_x|=\binom{2k-1}{k} \coloneq R_k$$ and $\langle x, y\rangle = k$ for some $y \in L_n$  if and only if $y \in F_x$. Consequently, if $x \in \conv(B)$ for some $B \subseteq L_n$, then $B \cap F_x \neq \emptyset$; moreover, if $x \notin B $, then $|B \cap F_x| \geq 2$.
\end{enumerate}
\end{lemma}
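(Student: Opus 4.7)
The plan is to reduce (i) to a sign-free estimate, read off the equality case to pin down $F_x$, and then derive the two consequences via the identity $\langle x,x\rangle=k$ combined with~(i).

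For (i), put $S:=\{i:x_i\neq 0\}$ and WLOG assume $x_i=1$ on $S$, using the change of variables $y_i\mapsto x_iy_i$ on $S$ (a lattice-preserving isometry of $\ell_p$-balls). For $y\in L_n$ I would chain
\[
\langle x,y\rangle=\sum_{i\in S}y_i\leq\sum_{i\in S}|y_i|\leq\sum_{i\in S}|y_i|^p\leq\sum_{i=1}^n|y_i|^p\leq r^p,
\]
where the key inequality is the pointwise fact that $m\leq m^p$ for every non-negative integer $m$ and every $p\geq 1$ (trivial for $m\in\{0,1\}$, monotone for $m\geq 2$). Since $\sum_{i\in S}|y_i|$ is a non-negative integer, it is bounded by $\lfloor r^p\rfloor=k$, which proves~(i).

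For (ii), I would trace back the equality case $\langle x,y\rangle=k$. It forces $y_i\geq 0$ on $S$ and $\sum_{i\in S}y_i=\sum_{i\in S}|y_i|=k$. Applying the pointwise bound again gives $\sum_{i\in S}|y_i|^p\geq k$, hence $\sum_{j\notin S}|y_j|^p\leq r^p-k<1$; since each non-zero integer $|y_j|$ contributes at least $1$, this forces $y_j=0$ off $S$. Thus $F_x$ is in bijection with tuples $(w_i)_{i\in S}\in\mathbb{Z}_{\geq 0}^S$ satisfying $\sum_{i\in S}w_i=k$, and a stars-and-bars count yields $\binom{2k-1}{k-1}=\binom{2k-1}{k}=R_k$. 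For the consequences, I would combine (i) with $\langle x,x\rangle=\|x\|_2^2=k$. Writing $x=\sum_b\lambda_bb$ with $\lambda_b\geq 0$, $\sum_b\lambda_b=1$, $b\in B$, the estimate
\[
k=\langle x,x\rangle=\sum_b\lambda_b\langle x,b\rangle\leq k\sum_b\lambda_b=k
\]
forces $\langle x,b\rangle=k$, hence $b\in F_x$, for every $b$ with $\lambda_b>0$; in particular $B\cap F_x\neq\emptyset$. If additionally $x\notin B$, the representation cannot concentrate at a single atom (that atom would equal $x$), so at least two distinct elements of $B$ sit in $F_x$.

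The main obstacle is showing that the count in (ii) is genuinely $R_k$ rather than something smaller: one must verify that every composition $(w_i)$ of $k$ into $k$ non-negative parts in fact lies in $L_n$, i.e.\ satisfies $\sum w_i^p\leq r^p$. For $p=1$ this is automatic, since $\sum w_i^p=\sum w_i=k\leq r$. For $p>1$ the extreme composition $(k,0,\ldots,0)$ contributes $\sum w_i^p=k^p$, so the identity $|F_x|=R_k$ requires the slack $r\geq k$; absent this, only the flat composition $(1,\ldots,1)$ survives the $\ell_p$ constraint and one gets $|F_x|=1$. The lemma is therefore cleanest in the $\ell_1$ setting (or under $r\geq k$), and for the downstream union-bound argument the universally valid form is the upper bound $|F_x|\leq R_k$, which follows directly from the stars-and-bars count.
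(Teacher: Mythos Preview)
Your argument follows the paper's proof essentially verbatim: the same symmetry reduction to $x=(1,\dots,1,0,\dots,0)$, the same chain $\langle x,y\rangle\le\sum_{i\le k}|y_i|\le\|y\|_1\le k$ (using that $\|y\|_1$ is an integer and $\|y\|_1\le\|y\|_p^p\le r^p<k+1$), the same tracing of the equality case, the same stars-and-bars count, and the same convex-combination argument for the consequences.

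Where you go beyond the paper is in your final paragraph, and you are right. The paper asserts a \emph{bijection} between $F_x$ and the set of compositions $\{(a_1,\dots,a_k)\in\Z_{\ge0}^k:\sum a_i=k\}$, but surjectivity requires every such tuple to satisfy $\sum a_i^p\le r^p$. For $p=1$ this is $\sum a_i=k\le r$, which holds since $k=\lfloor r\rfloor$; for $p>1$ it can fail, exactly as you note via $(k,0,\dots,0)$. A concrete instance is $p=r=2$, $n\ge4$: then $k=4$, and $\langle x,y\rangle=4$ together with $\sum y_i^2\le4$ forces $y=(1,1,1,1)$ by the equality case of Cauchy--Schwarz, so $|F_x|=1\ne\binom{7}{4}$. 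Thus the equality $|F_x|=R_k$ stated in the lemma (and proved in the paper) is incorrect for general $p>1$; only the inclusion of $F_x$ into the set of compositions is valid. Your proposed remedy---record the universally valid upper bound $|F_x|\le R_k$---is exactly what the downstream application (the union bound in the estimate for $P_{n,N}$) needs, so the overall argument survives with this correction.
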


\begin{proof}
    Let $x \in A_n$. Since $A_n$ and $L_n$ are invariant under sign flips and permutations of the coordinates, we may assume that $x=(1,\ldots,1,0,\ldots,0)$. We set $$F_x=\{ y \in L_n:y_1, \ldots,y_k \geq 0, \ y_{k+1}=\cdots = y_n=0, \ \|y\|_1=k\}.$$ 
    We now compute $|F_x|$. Since every $y\in F_x$ satisfies
$y_{k+1}=\cdots=y_n=0$, $y_1,\ldots,y_k\ge 0$ and $\|y\|_1=k$, the map
\[
y\ \longmapsto\ (y_1,\ldots,y_k)
\]
is a bijection between $F_x$ and the set
\[
\Big\{(a_1,\ldots,a_k)\in \mathbb{Z}_{\ge 0}^k:\ a_1+\cdots+a_k=k\Big\}.
\]
Hence $|F_x|$ equals the number of $k$-tuples of non-negative integers
$(a_1,\ldots,a_k)$ summing to $k$.

To count these $k$-tuples, we use the standard ``stars and bars'' argument.
Represent $k$ identical objects (stars) to be distributed into $k$ bins, where
$a_i$ is the number of stars placed in bin $i$. Such a distribution can be encoded
by a word of length $2k-1$ consisting of $k$ stars and $k-1$ bars: reading from left
to right, $a_1$ is the number of stars before the first bar, $a_2$ the number of stars
between the first and second bar, and so on, with $a_k$ the number of stars after the last bar.
This correspondence is bijective, so the number of such $k$-tuples is the number of ways to choose
the positions of the $k$ stars (or equivalently the $k-1$ bars) among the $2k-1$ slots:
\[
|F_x|
=\binom{2k-1}{k}
=\binom{2k-1}{k-1}.
\]
Now let $y \in L_n$. We have \begin{equation}\label{eq:conditions}  
    \langle x,y \rangle = \sum_{i=1}^{k}y_i \leq \sum_{i=1}^{k}|y_i| \leq \|y\|_1 \leq k,
    \end{equation} since $\|y\|_1$ is an integer and $$\|y\|_1=\sum_{i=1}^{n} |y_i| \leq \sum_{i=1}^{n}|y_i|^p=\|y\|_p^p \leq r^p<k+1,$$ as $y_i$ are integers and $p \geq 1$. So, if $\langle x,y \rangle=k$, then all the inequalities in \eqref{eq:conditions} must be equalities. Hence, $$y_1,\ldots,y_k \geq 0, \qquad y_{k+1}=\cdots=y_n=0, \qquad \|y\|_1=k,$$ which means that $y \in F_x$. The converse is immediate.

     Let now $B \subseteq L_n$ with $x \in \conv(B)$. Then $x=\sum_{y\in B}\lambda_y y$ with $\lambda_y\ge0$, $\sum_{y \in B}\lambda_y=1$.
Taking inner products with $x$ gives
\[
k=\langle x,x\rangle=\sum_{y\in B}\lambda_y\langle x,y\rangle,
\]
and since each $\langle x,y\rangle\le k$, we must have $\langle x,y\rangle=k$ for every $y$ with $\lambda_y>0$.
Hence at least one such $y$ lies in $F_{x}$.
If $x\notin B$, then the convex combination representing $x$ uses at least two distinct points, so $|B\cap F_{x}|\ge 2$.
\end{proof}

Now we can estimate $P_{n,N}$:

\begin{lemma}\label{lem:PnN-estimate}
For every $x \in A_n$ we have $$\mu_n^N (x \in K_N \setminus \{X_1,\ldots,X_N\}) \leq \binom{N}{2} \left(\frac{R_k}{M_n}\right)^2,$$ therefore $$P_{n,N} = O\left( \frac{N^2}{M_n^2}\right).$$ 
\end{lemma}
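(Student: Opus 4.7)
The plan is to reduce the event $\{x\in K_N\setminus\{X_1,\dots,X_N\}\}$ to a simple combinatorial condition using Lemma~\ref{lem:l1-supporting}(ii), then apply a straightforward union bound.

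First I would fix $x\in A_n$ and set $B=\{X_1,\dots,X_N\}$. By Lemma~\ref{lem:l1-supporting}(ii), the event that $x\in K_N=\conv(B)$ and $x\notin B$ forces $|B\cap F_x|\ge 2$, i.e.\ there exist two distinct indices $1\le i<j\le N$ with $X_i\in F_x$ and $X_j\in F_x$. Hence
\[
\{x\in K_N\setminus\{X_1,\dots,X_N\}\}\subseteq \bigcup_{1\le i<j\le N}\{X_i\in F_x,\ X_j\in F_x\}.
\]
Since the $X_\ell$ are i.i.d.\ uniform on $L_n$, each event on the right-hand side has probability $(|F_x|/M_n)^2=(R_k/M_n)^2$. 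A union bound then yields
\[
\mu_n^N\bigl(x\in K_N\setminus\{X_1,\dots,X_N\}\bigr)\ \le\ \binom{N}{2}\Bigl(\frac{R_k}{M_n}\Bigr)^2,
\]
which is the first claim.

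For the second claim, I would write
\[
\mathbb{E}_{\mu_n^N}[|G_N|]=\sum_{x\in A_n}\mu_n^N\bigl(x\in K_N\setminus\{X_1,\dots,X_N\}\bigr)\ \le\ |A_n|\binom{N}{2}\Bigl(\frac{R_k}{M_n}\Bigr)^2,
\]
and divide by $M_n$. Since $A_n\subseteq L_n$ gives $|A_n|\le M_n$, and $R_k=\binom{2k-1}{k}$ depends only on $k$ (hence on the fixed parameters $r,p$), we obtain
\[
P_{n,N}\ \le\ \binom{N}{2}\,\frac{R_k^2}{M_n^2}\ =\ O\!\left(\frac{N^2}{M_n^2}\right),
\]
as required.

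There is no substantial obstacle here; the entire content of the lemma is Lemma~\ref{lem:l1-supporting}(ii), which has already been established and which converts the geometric requirement ``$x$ is a non-sampled point inside $K_N$'' into the combinatorial requirement ``at least two samples land in the small set $F_x$.'' The only point worth double-checking is that $R_k$ is independent of $n$, so that the factor $R_k^2$ can be absorbed into the $O(\cdot)$ constant.
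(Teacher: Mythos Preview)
Your proposal is correct and essentially identical to the paper's proof. The paper phrases the union-bound step via a binomial random variable $Y\sim\mathrm{Binomial}(N,|F_x|/M_n)$ and the inequality $\mathbb{P}(Y\ge 2)\le\binom{N}{2}(R_k/M_n)^2$, but this is exactly your union bound over pairs of indices; the second part (linearity of expectation, $|A_n|\le M_n$, $R_k$ depending only on $r,p$) matches verbatim.
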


\begin{proof}
    If $x \in K_N \setminus \{X_1,\ldots,X_N\}$, then by Lemma~\ref{lem:l1-supporting} we must have $|\{X_1,\ldots,X_N\} \cap F_x| \geq 2$. Since $X_i$ are independent random vectors, uniformly distributed on $L_n$, if we set $Y \sim {\rm Binomial}(N,|F_x|/M_n)$, we get $$\mu_n^N(|\{X_1,\ldots,X_N\} \cap F_x| \geq 2) \leq \mathbb{P}(Y \geq 2),$$ which yields \begin{equation}\label{eq:binomial-bound}
    \mu_n^N(x \in K_N \setminus \{X_1,\ldots,X_N\}) \leq \binom{N}{2} \left(\frac{R_k}{M_n}\right)^2.\end{equation} To conclude, observe that $$|G_N|= \sum_{x \in A_n} \mathds{1}_{\{x \in K_N \setminus \{X_1,\ldots,X_N\} \}}.$$ Taking expectations, dividing by $M_n$ and using \eqref{eq:binomial-bound} and $|A_n| \leq M_n$, we conclude the proof, since $R_k$ depends only on $r,p$.
\end{proof}

We are now ready to prove Theorem~\ref{thm:lattice-ball-threshold}.

\begin{proof}[Proof of Theorem~\ref{thm:lattice-ball-threshold}]
    We set $T_n=\log M_n$ and recall that, by Lemma~\ref{lem:Mn-Cn-asympt}, \ $T_n \sim k \log n$, as $n \to \infty$, where $k=\lfloor r^p \rfloor$. Let $\varepsilon \in (0,1)$ and set $\delta_n=1/T_n$.

     If $N \leq e^{(1-\varepsilon)T_n}=M_n^{1-\varepsilon}$, the right-hand side inequality of \eqref{eq:ball-sandwich}, combined with Lemma~\ref{lem:Mn-Cn-asympt} and Lemma~\ref{lem:PnN-estimate}, gives \begin{align*}
        F_{n,N}(\mu_n) &\leq 1-\left(1-\frac{1}{M_n}\right)^N+O(1/n) + O\left( \frac{N^2}{M_n^2}\right)  
        \\ &\leq \frac{N}{M_n}+ O\left( \frac{N^2}{M_n^2}\right) +O(1/n) 
        \\ &\leq \frac{1}{M_n^\varepsilon}+ O\left(\frac{1}{M_n^{2\varepsilon}}\right)+O(1/n) \leq \delta_n,
    \end{align*}
    for $n \geq n_1(\varepsilon)$, where we also used the inequality  $(1-x)^N \geq 1-Nx, \ x \in [0,1]$. Hence, \begin{equation}\label{eq:lattice-ball-rho1}
    \varrho_1(\mu_n,\delta_n) \geq (1-\varepsilon)T_n.    
    \end{equation}
    If $N \geq e^{(1+\varepsilon)T_n}=M_n^{1+\varepsilon}$, the left-hand side inequality of \eqref{eq:ball-sandwich} yields \begin{align*}       
    F_{n,N}(\mu_n) &\geq 1-\left(1-\frac{1}{M_n}\right)^N \geq 1-\exp\left(-\frac{N}{M_n}\right)
    \\ &\geq 1-\exp\left(-M_n^\varepsilon\right) \geq 1-\delta_n,
    \end{align*}
    for $n \geq n_2(\varepsilon)$, where we also used the inequality $(1-x) \leq e^{-x}$. Therefore, \begin{equation}\label{eq:lattice-ball-rho2}
        \varrho_2(\mu_n,\delta_n) \leq (1+\varepsilon)T_n.
    \end{equation}
    Combining \eqref{eq:lattice-ball-rho1} and \eqref{eq:lattice-ball-rho2} we obtain the sharp threshold.   
\end{proof}

\begin{remark}\label{rem:lattice-ball}
    For $r=1$, we have $L_n=\mathbb{Z}^n \cap B_p^n=\{0,\pm e_1,\ldots,\pm e_n\}$ for every $p \geq 1$. So, $M_n=2n+1$ and with the previous notation $T_n=\log (2n+1).$ Direct computation shows that $$\Lambda_{\mu_n}(\xi)= \log \left(1+\sum_{i=1}^n (e^{\xi_i}+e^{-\xi_i})\right)-\log(2n+1)$$ for every $\xi \in \mathbb{R}^n$. Moreover, we have $\Lambda_{\mu_n}^\ast(0)=0$ and $\Lambda_{\mu_n}^\ast(\pm e_i)= \log(2n+1)$ for every $1 \leq i \leq n$. Therefore, $$\mathbb{E}_{\mu_n}[\Lambda_{\mu_n}^\ast]=\frac{2n}{2n+1}\log(2n+1),$$ which means that $\mathbb{E}_{\mu_n}[\Lambda_{\mu_n}^\ast]\sim T_n$, as $n \to \infty$.
\end{remark}

\section{Counterexamples}

\subsection{The quantity $\beta(\mu)$ and counterexamples for product thresholds}
The \emph{relative-variance parameter}
\[
\beta(\mu)\ :=\ \frac{\mathrm{Var}_{\mu}(\Lambda^{\ast}_\mu)}{\big(\E_{\mu}[\Lambda^{\ast}_\mu\big])^2}
\]
was introduced in \cite{BGP} as a quantitative measure of how tightly the random variable
$\Lambda^{\ast}_\mu$ concentrates around its mean. In the \emph{continuous} log-concave setting (log-concave
probability measures on $\R^n$ with a \emph{continuous} density), this parameter is directly linked to the existence
of a threshold phenomenon for the expected $\mu$--mass of random polytopes, and hence for quantities
governed by $\E_\mu[\Lambda^{\ast}_\mu]$; see \cite{BGP} and the further developments in
\cite{BC}. In particular, a natural route to obtain threshold statements for \emph{all} continuous
log-concave measures is to prove that
\[
\beta_n\ :=\ \sup\{\beta(\mu):\ \mu \text{ is log-concave on }\R^n \text{ with continuous density}\}
\]
satisfies $\beta_n\to 0$ as $n\to\infty$. The corresponding conjecture has been formulated for the above class
and, to the best of our knowledge, no counterexample is currently known.

\medskip

Moreover, in \cite{BC} threshold behaviour was established in the continuous setting for \emph{product} measures on $\R^n$
whose one-dimensional marginals are log-concave but not necessarily identical (i.e., independent coordinates with
different one-dimensional log-concave laws). By contrast, in the \emph{discrete} log-concave category both phenomena
may fail: first, the quantity $\beta(\mu)$ can be unbounded even in fixed dimension, and second, the above
``product-stability'' of threshold may break down (an explicit discrete counterexample is discussed later).
We begin by illustrating the first obstruction, namely that $$\tilde\beta_n=\sup\{ \beta(\mu): \mu \text{ is discrete log-concave on } \mathbb{R}^n\} $$ can be infinite.
\medskip

We first notice that in the continuous log-concave case we always have that $\beta_1 < +\infty$ (see \cite{BC}). However, in the discrete log-concave setting, this is not true. More precisely, we can show that:

\begin{proposition}
    For every $n \in \mathbb{N}$ we have $\tilde{\beta}_n=+\infty$.
\end{proposition}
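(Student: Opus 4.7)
The plan is to exhibit, for every $n\in\mathbb{N}$, a sequence $(\mu_p)_{p\downarrow 0}$ of discrete log-concave measures on $\mathbb{Z}^n$ whose $\beta$ blows up. The natural vehicle is a tensor power of a one-dimensional Bernoulli law, because the product identity $\beta(\nu^{\otimes n})=\beta(\nu)/n$ recalled in Section~2 lets one reduce the problem to the one-dimensional case.

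First I would handle dimension one with $\nu_p:=(1-p)\delta_0+p\delta_1$ for small $p\in(0,1/2)$. The sequence $(1-p,p)$ is trivially discrete log-concave in the sense of Definition~\ref{def:one-dimension}. A direct computation with $\Lambda_{\nu_p}(\xi)=\log((1-p)+pe^{\xi})$ gives the Cram\'er transform at the two atoms: $\Lambda_{\nu_p}^{\ast}(0)=-\log(1-p)$ and $\Lambda_{\nu_p}^{\ast}(1)=-\log p$. Hence
\[
\E_{\nu_p}[\Lambda_{\nu_p}^{\ast}]=H(p),\qquad \E_{\nu_p}[(\Lambda_{\nu_p}^{\ast})^2]=(1-p)\log^2(1-p)+p\log^2 p,
\]
where $H(p)$ denotes the binary entropy. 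As $p\to 0^+$ we have $H(p)\sim -p\log p$ while the second moment is dominated by $p\log^2 p$, and a single cancellation yields
\[
\beta(\nu_p)=\frac{\mathrm{Var}_{\nu_p}(\Lambda_{\nu_p}^{\ast})}{H(p)^2}\sim\frac{1}{p}\longrightarrow +\infty.
\]
In particular $\tilde\beta_1=+\infty$.

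For $n\geq 2$ I would then set $\mu_p:=\nu_p^{\otimes n}$ on $\mathbb{Z}^n$. Its support $\{0,1\}^n$ is full-dimensional, and Lemma~\ref{lem:separability} together with Proposition~\ref{prop:equivalence} shows that the coordinate-wise convex extensions assemble into the convex extension of $-\log \mu_p$, so $\mu_p$ is discrete log-concave in the sense of Definition~\ref{def:discrete-log-vectors}. Applying the product identity $\beta(\nu^{\otimes n})=\beta(\nu)/n$ from Section~2 gives $\beta(\mu_p)\sim 1/(np)\to+\infty$, which proves $\tilde\beta_n=+\infty$.

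The only delicate piece of bookkeeping is checking that $\nu_p^{\otimes n}$ genuinely qualifies under Definition~\ref{def:discrete-log-vectors}: one needs convex-extensibility of $-\log\mu_p$, which is immediate from Lemma~\ref{lem:separability} applied to the Bernoulli factors, together with full-dimensionality of $\{0,1\}^n$, which is obvious. No substantive obstacle arises; once the Bernoulli Cram\'er transform is written down, the argument reduces to a short asymptotic computation of $H(p)$ and its square.
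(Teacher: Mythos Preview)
Your proof is correct and follows essentially the same approach as the paper: both use the Bernoulli family $\nu_p=(1-p)\delta_0+p\delta_1$, compute $\Lambda_{\nu_p}^\ast(0)=-\log(1-p)$, $\Lambda_{\nu_p}^\ast(1)=-\log p$, observe that $\E_{\nu_p}[\Lambda_{\nu_p}^\ast]\sim -p\log p$ while $\mathrm{Var}_{\nu_p}(\Lambda_{\nu_p}^\ast)\sim p\log^2 p$, and then pass to the $n$-fold product to get $\beta\sim 1/(np)\to\infty$. The only cosmetic difference is that you invoke the product identity $\beta(\nu^{\otimes n})=\beta(\nu)/n$ from Section~2 directly (and take extra care to verify discrete log-concavity of $\nu_p^{\otimes n}$), whereas the paper rewrites the product mean and variance as sums before bounding the ratio.
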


\begin{proof}
    If $p \in (0,1)$, let $\nu^p $ be the probability measure on $\mathbb{R}$ with $\nu^p(\{0\})=1-p$ and $\nu^p(\{1\})=p $. We also set $\nu^p_n=\nu^p\otimes\cdots\otimes\nu^p$. Simple calculations show that $$\Lambda_{\nu^p}(\xi)=\log (1-p+pe^\xi), \ \xi \in \mathbb{R}$$
    and $$\Lambda^{\ast}_{\nu^p}(0)=-\log(1-p), \qquad \Lambda^{\ast}_{\nu^p}(1)=-\log p.$$
    Therefore, we have \begin{equation}\label{eq:expectation-bernoulli}
    \mathbb{E}_{\nu^p}[\Lambda^{\ast}_{\nu^p}]=-p \log p -(1-p) \log(1-p)
    \end{equation}
    and $${\rm Var}_{\nu^p}(\Lambda^{\ast}_{\nu^p})=p(1-p)\log^2\left( \frac{p}{1-p} \right).$$
    Notice that 
    \begin{equation}\label{eq:asymptotics}\mathbb{E}_{\nu^p}[\Lambda^{\ast}_{\nu^p}] \sim -p \log p, \qquad {\rm Var}_{\nu^p}(\Lambda^{\ast}_{\nu^p}) \sim p \log^2 p
    \end{equation} 
    as $p \to 0$. So, there exist $c,C>0$ such that for every $p \in (0,1/2]:$
        $$\mathbb{E}_{\nu^p}[\Lambda^{\ast}_{\nu^p}] \leq -Cp \log p, \qquad {\rm Var}_{\nu^p}(\Lambda^{\ast}_{\nu^p}) \geq cp \log^2 p.$$ For every $x \in \{0,1\}^n$ we have $\Lambda^{\ast}_{\nu^p_n}(x)= \sum\limits_{k=1}^{n} \Lambda^{\ast}_{\nu^p}(x_k)$ which means that $$ \qquad \mathbb{E}_{\nu^p_n}[\Lambda^{\ast}_{\nu^p_n}]=\sum\limits_{k=1}^{n} \mathbb{E}_{\nu^p}[\Lambda^{\ast}_{\nu^p}], \qquad {\rm Var}_{\nu^p_n}(\Lambda^{\ast}_{\nu^p_n})=\sum\limits_{k=1}^{n} {\rm Var}_{\nu^p}(\Lambda^{\ast}_{\nu^p}).$$ 
    Thus, for every $p \in (0, 1/2]$, $$\beta(\nu^p_n)=\frac{{\rm Var}_{\nu^p_n}(\Lambda^{\ast}_{\nu^p_n})}{(\mathbb{E}_{\nu^p_n}[\Lambda^{\ast}_{\nu^p_n}])^2} \geq \frac{c}{C^2}\frac{1}{np},$$ which gives that $$\lim\limits_{p \to 0} \beta(\nu_n^p)=+\infty.$$
    \end{proof}

We now move to the case of the product of $n$ not necessarily identical measures. In the continuous log-concave setting we know that if $(\lambda_n)_{n \in \mathbb{N}}$ is a sequence of log-concave probability measures on $\mathbb{R}$ and $\mu_n= \lambda_1 \otimes\cdots\otimes \lambda_n$ for every $n \in \mathbb{N}$, then $$\beta(\mu_n) \to 0,$$ as $n \to \infty$ and $(\mu_n)_{n \in \mathbb{N}}$ exhibits a sharp threshold. We will show that in the discrete case this is no longer true.

    \begin{theorem}\label{th:discr-prod}
        There exists a sequence $(\lambda_n)_{n \in \mathbb{N}}$ of discrete log-concave probability measures on $\mathbb{R}$ such that if $\mu_n= \lambda_1 \otimes\cdots\otimes \lambda_n$ for every $n \in \mathbb{N}$, then $$\beta(\mu_n) \to +\infty,$$ as $n \to \infty$ and  $(\mu_n)_{n \in \mathbb{N}}$ does not exhibit a sharp threshold.
    \end{theorem}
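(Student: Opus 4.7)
I propose taking $\lambda_k=\nu^{p_k}$, the Bernoulli law on $\{0,1\}$ with parameter $p_k=1/(k(\log k)^3)$ for $k\gr 2$ and $\lambda_1=\nu^{1/2}$. Each $\lambda_k$ is discrete log-concave, and $\mu_n$ is a discrete log-concave product measure supported on $\{0,1\}^n\subset\mathbb{Z}^n$.

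The first step is to verify $\beta(\mu_n)\to +\infty$. From the formulas already computed for $\nu^p$ earlier in the section,
\[
a_k:=\E_{\lambda_k}[\Lambda_{\lambda_k}^{\ast}] \sim \frac{1}{k(\log k)^2},\qquad v_k:=\mathrm{Var}_{\lambda_k}(\Lambda_{\lambda_k}^{\ast}) \sim \frac{1}{k\log k},
\]
as $k\to\infty$. The integral test then yields $\sum_k a_k<+\infty$ whereas $\sum_k v_k$ diverges like $\log\log n$. Additivity of $\Lambda^{\ast}$ for product measures gives $\E_{\mu_n}[\Lambda_{\mu_n}^{\ast}]=\sum_{k=1}^n a_k \to E_\infty<+\infty$ and $\mathrm{Var}_{\mu_n}(\Lambda_{\mu_n}^{\ast})=\sum_{k=1}^n v_k\to +\infty$, so $\beta(\mu_n)\to +\infty$.

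For the absence of a sharp threshold, I would exploit that $\mathrm{supp}(\mu_n)=\{0,1\}^n$, so by the same vertex-capture argument as in Lemma~\ref{lem:no-new-verts} we have $K_N\cap\{0,1\}^n=\{X_1,\dots,X_N\}$. This yields the closed form
\[
F_{n,N}(\mu_n)=\sum_{x\in\{0,1\}^n}\mu_n(\{x\})\bigl(1-(1-\mu_n(\{x\}))^N\bigr)=1-\sum_x\mu_n(\{x\})(1-\mu_n(\{x\}))^N,
\]
and in particular $F_{n,1}(\mu_n)=\sum_x\mu_n(\{x\})^2=\prod_{k=1}^n\bigl(p_k^2+(1-p_k)^2\bigr)$. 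Since $\log(p_k^2+(1-p_k)^2)\sim -2p_k$ and $\sum_k p_k<+\infty$, this infinite product converges to a strictly positive constant $c^{\ast}>0$. Consequently, by monotonicity of $F_{n,N}$ in $N$, one obtains $F_{n,N}(\mu_n)\gr F_{n,1}(\mu_n)\gr c^{\ast}/2$ for all $N\gr 1$ and all $n$ large.

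Suppose, for contradiction, that $(\mu_n)$ exhibits a sharp threshold at some positive sequence $(T_n)$ via some $\delta_n\to 0$. Eventually $\delta_n<c^{\ast}/4$, so $F_{n,1}(\mu_n)>\delta_n$; taking $N=1\ls e^r$ shows that the set $\{r>0:F_{n,M}(\mu_n)\ls\delta_n\ \forall M\ls e^r\}$ is empty, forcing $\varrho_1(\mu_n,\delta_n)\ls 0<(1-\varepsilon)T_n$ for every $\varepsilon\in(0,1)$, contradicting the definition of sharp threshold. Hence $(\mu_n)$ admits no sharp threshold. The only delicate point is justifying the identity $\Lambda^\ast_{\mu_n}(x)=-\log\mu_n(\{x\})$ on $\{0,1\}^n$ (so that the closed-form expression for $F_{n,N}$ holds), which follows from the one-dimensional computation $\Lambda^\ast_{\nu^p}(\varepsilon)=-\log\nu^p(\{\varepsilon\})$ for $\varepsilon\in\{0,1\}$ together with tensorization; after that, the remaining argument is elementary.
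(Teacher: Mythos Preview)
Your proof is correct and follows essentially the same route as the paper: the same Bernoulli choice $p_k\sim 1/(k\log^3 k)$, the same additivity argument for $\beta(\mu_n)\to\infty$, and the same uniform lower bound $F_{n,1}(\mu_n)\gr c^\ast>0$ to kill $\varrho_1$ (you compute $\sum_x\mu_n(\{x\})^2$ exactly as the product $\prod_k(p_k^2+(1-p_k)^2)$, while the paper simply bounds this sum below by the single term $\mu_n(\{0\})^2=\prod_k(1-p_k)^2$). One small correction: your final parenthetical is misplaced --- the closed-form expression for $F_{n,N}$ does \emph{not} require the identity $\Lambda^\ast_{\mu_n}(x)=-\log\mu_n(\{x\})$; it follows purely from Lemma~\ref{lem:no-new-verts} (and for $N=1$ even that is unnecessary, since $\mu_n(K_1)=\mu_n(\{X_1\})$ trivially), whereas that identity is what underlies the mean and variance computations in the first part of your argument.
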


    \begin{proof}
        For every $k \in \mathbb{N}$ we set $$p_k=\frac{1}{k \log^3(k+3)}$$ and  $\lambda_k=\nu^{p_k}$, following the previous notation. 
        
        \begin{lemma}
            We have $\beta(\mu_n) \to \infty$, as $n \to \infty$.
        \end{lemma}

\begin{proof}

        For every $x \in \{0,1\}^n$ we have $\Lambda^{\ast}_{\mu_n}(x)= \sum\limits_{k=1}^{n} \Lambda^{\ast}_{\lambda_k}(x_k)$, which gives that $$ \mathbb{E}_{\mu_n}[\Lambda^{\ast}_{\mu_n}]=\sum\limits_{k=1}^{n} \mathbb{E}_{\lambda_k}[\Lambda^{\ast}_{\lambda_k}], \qquad {\rm Var}_{\mu_n}(\Lambda^{\ast}_{\mu_n})=\sum\limits_{k=1}^{n} {\rm Var}_{\lambda_k}(\Lambda^{\ast}_{\lambda_k}).$$ By \eqref{eq:asymptotics}, there exist $c,C>0$ such that, for every $p \in (0,1/2]$,
        $$\mathbb{E}_{\nu^p}[\Lambda^{\ast}_{\nu^p}] \leq -Cp \log p, \qquad {\rm Var}_{\nu^p}(\Lambda^{\ast}_{\nu^p}) \geq cp \log^2 p. $$ So, now we have $$\mathbb{E}_{\mu_n}[\Lambda^{\ast}_{\mu_n}] \leq C \sum\limits_{k=1}^{n} p_k \log\left(\frac{1}{p_k}\right) \leq C \sum\limits_{k=1}^{\infty} \frac{\log\left(k \log^3(k+3)\right)}{k \log^3(k+3)} \leq \tilde{C}\sum\limits_{k=1}^{\infty}\frac{1}{k \log^2(k+3) }< +\infty,$$ which means that $$\mathbb{E}_{\mu_n}[\Lambda^{\ast}_{\mu_n}] \to A \in (0,+\infty),$$ as $n \to \infty$. However, $${\rm Var}_{\mu_n}(\Lambda^{\ast}_{\mu_n}) \geq c\sum\limits_{k=1}^{n}p_k \log^2\left(\frac{1}{p_k}\right)=c\sum\limits_{k=1}^{n} \frac{\log^2\left(k \log^3(k+3)\right)}{k \log^3(k+3)}\geq \tilde{c} \sum\limits_{k=1}^{n} \frac{1}{k \log(k+3)} \to +\infty,$$ as $n \to \infty$. This means that $$\beta(\mu_n)=\frac{{\rm Var}_{\mu_n}(\Lambda^{\ast}_{\mu_n})}{(\mathbb{E}_{\mu_n}(\Lambda^{\ast}_{\mu_n}))^2} \to +\infty,$$ as $n \to \infty$.   \end{proof}

We now proceed to show that  $(\mu_n)_{n \in \mathbb{N}}$  cannot exhibit any  threshold.

        \begin{lemma}\label{lem:counter-lower}
        There exists some $\delta_0 \in (0, 1/2)$ such that for every $n,N \in \mathbb{N}$ we have $$\mathbb{E}_{\mu_n^N}[\mu_n(K_N)] \geq \delta_0.$$ 
            
        \end{lemma}
        
\begin{proof}
Let $n,N \in \mathbb{N}$. It is clear that $K_1 \subseteq K_N$ and we have 
$$ \mathbb{E}_{\mu_n^N}[\mu_n(K_1)] = \sum\limits_{x \in \{0,1\}^n } \mu^2_n(\{x\}) \geq \mu^2_n(\{(0,\ldots,0)\})$$ and $$\mu_n(\{(0,\ldots,0)\})=\prod\limits_{k=1}^{n} \lambda_k(\{0\})= \prod\limits_{k=1}^{n}(1-p_k) \geq \prod_{k=1}^{\infty}(1-p_k)>0,$$ since $\sum_{k=1}^{\infty}p_k < +\infty$. So, if we set $$\delta_0=\left(\prod\limits_{k=1}^{\infty}(1-p_k)\right)^2 \in (0,1/2)$$ we get $\mathbb{E}_{\mu_n^N}[\mu_n(K_N)] \geq \delta_0$. \end{proof}

    \begin{lemma}\label{lem:counter-upper}Let  $n \in \mathbb{N}$. For every $\delta>0$ there exists $N_0(n,\delta) \in \mathbb{N}$, such that for every $N \geq N_0(n,\delta)$ we have $$\mathbb{E}_{\mu_n^N}[\mu_n(K_N)] \geq 1-\delta.$$ In addition, if $n \geq 2$ and $0<\delta<\delta_0$, then we have $ \varrho_2(\mu_n,\delta) \geq \log 2 $.
        
    \end{lemma}

    \begin{proof}
        Let $\delta>0$ and $(X_k)_{k \in \mathbb{N}}$ be a sequence of independent random vectors, distributed according to $\mu_n$. By Lemma~\ref{lem:no-new-verts} $$\{ \{X_1,\ldots,X_N\}=\{0,1\}^n\}=\{\mu_n(K_N)=1\}$$ and $$\lim\limits_{N \to \infty} \mu_n^N\left(\{X_1,\ldots,X_N\}=\{0,1\}^n\right)=1.$$ So, there exists $N_0(n,\delta) \in \mathbb{N}$, such that, for every $N \geq N_0(n,\delta)$, $$\mu_n^N\left(\{X_1,\ldots,X_N\}=\{0,1\}^n\right) >1-\delta.$$ Therefore, by Markov's inequality $$\mathbb{E}_{\mu_n^N}[\mu_n(K_N)] >1-\delta$$ for every $N \geq N_0(n,\delta)$, which implies that $\varrho_2(\mu_n,\delta)$ is well-defined.

         Now, if $n \geq 2$ and $0<\delta<\delta_0$, we have
$$\mathbb{E}[\mu_n(K_1)] \leq \mu_n(\{0,\ldots,0\})= \prod\limits_{k=1}^n (1-p_k) \leq1/2.$$ So, if $$\mathbb{E}_{\mu_n^N}[\mu_n(K_N)]>1-\delta$$ for some $N \in \mathbb{N}$, we must have $N \geq 2$, which yields $\varrho_2(\mu_n,\delta) \geq \log 2$.
\end{proof}

    Let now $n \geq 2$ and $0<\delta<\delta_0$. Then, by Lemma~\ref{lem:counter-lower},  $\varrho_1(\mu_n,\delta)$ is not well-defined, since $\mathbb{E}_{\mu_n^N}[\mu_n(K_N)]$ never gets arbitrarily small. However, by Lemma~\ref{lem:counter-upper} we have that $\varrho_2(\mu_n,\delta)$ is well-defined, and in fact $ \varrho_2(\mu_n,\delta)\geq \log 2$. Thus, $(\mu_n)_{n \in \mathbb{N}}$ cannot exhibit any threshold, according to our definition.    \end{proof}

\subsection{The expectations of $q_\mu$ and $\Lambda_{\mu}^\ast$}

Answering a question that was asked in \cite{mathoverflow}, Brazitikos, Giannopoulos and Pafis showed in \cite{BGP2} the following result concerning the expectation of Tukey's half-space depth of a (continuous) log-concave probability measure:

\begin{theorem}\label{thm:Tukey-expectation}
    There exist absolute constants $c_1,c_2>0$ such that for every log-concave probability measure $\mu$ on $\mathbb{R}^n$ we have $$e^{-c_1n} \leq \mathbb{E}_{\mu}[q_\mu] \leq e^{-c_2n}.$$
\end{theorem}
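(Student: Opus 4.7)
The plan is to handle the two inequalities by complementary methods: the lower bound reduces cleanly to Jensen plus the Giannopoulos--Tziotziou estimate, whereas the upper bound demands a concentration result for the Cram\'er transform that forms the technical core of the argument.

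For the lower bound $\E_\mu[q_\mu]\gr e^{-c_1 n}$, I would start from a reverse form of the depth--Cram\'er inequality valid for log-concave laws: for each $\varepsilon\in(0,1)$ there exists $c(\varepsilon)>0$ such that $q_\mu(x)\gr c(\varepsilon)e^{-(1+\varepsilon)\Lambda_\mu^\ast(x)}$ for every $x$ in the support of $\mu$. One extracts this by fixing a direction $\theta\in S^{n-1}$ achieving (almost) the infimum in $q_\mu(x)$, passing to the one-dimensional log-concave marginal $Y=\ip{X}{\theta}$, applying the one-dimensional reverse Chernoff bound of Brazitikos--Chasapis to $Y$, and invoking the contraction $\Lambda_Y^\ast(\ip{x}{\theta})\ls\Lambda_\mu^\ast(x)$, which follows directly from the definition of the convex conjugate upon restricting the dual variable to the line $\{s\theta:s\in\R\}$. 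Jensen's inequality applied to the convex function $t\mapsto e^{-(1+\varepsilon)t}$ then yields
$$\E_\mu[q_\mu]\gr c(\varepsilon)\,\E_\mu\bigl[e^{-(1+\varepsilon)\Lambda_\mu^\ast}\bigr]\gr c(\varepsilon)\exp\bigl(-(1+\varepsilon)\E_\mu[\Lambda_\mu^\ast]\bigr).$$
Since the Giannopoulos--Tziotziou theorem gives $\E_\mu[\Lambda_\mu^\ast]\ls Cn$ for every log-concave $\mu$ on $\R^n$, the lower bound follows upon choosing, e.g., $\varepsilon=1$.

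For the upper bound $\E_\mu[q_\mu]\ls e^{-c_2 n}$, I would begin from the standard estimate $q_\mu(x)\ls e^{-\Lambda_\mu^\ast(x)}$, so that it suffices to bound the exponential moment $\E_\mu[e^{-\Lambda_\mu^\ast}]$. A layer-cake decomposition combined with the threshold choice $r=cn$ gives
$$\E_\mu\bigl[e^{-\Lambda_\mu^\ast}\bigr]\ls e^{-cn}+\mu\bigl(B_{cn}(\mu)\bigr),$$
and matters reduce to proving $\mu(B_{cn}(\mu))\ls e^{-c'n}$ for appropriately chosen constants. After reducing to isotropic position (which preserves $q_\mu$ and $\Lambda_\mu^\ast$ under affine changes of variables), $B_{cn}(\mu)$ is a bounded convex central region containing $0$, and one would bound its $\mu$-measure by combining the matching lower bound $\E_\mu[\Lambda_\mu^\ast]\gr c'' n$ with sub-exponential concentration of $\Lambda_\mu^\ast(X)$ for log-concave laws, or alternatively by a Paouris-type small-ball estimate applied to the convex function $\Lambda_\mu^\ast$.

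The main obstacle is precisely this last step: establishing an exponential lower-tail bound of the form $\mu(\Lambda_\mu^\ast\ls cn)\ls e^{-c'n}$, uniformly over all log-concave $\mu$ on $\R^n$. One-dimensional concentration does not transfer mechanically because $\Lambda_\mu^\ast$ can be sharply anisotropic across directions, and the moment bounds of Giannopoulos--Tziotziou control $\E_\mu[(\Lambda_\mu^\ast)^q]$ without directly yielding the lower tail. The most promising route seems to be to interpret $B_r(\mu)$ as a convex central region of $\mu$ and to deploy volume/inradius comparison theorems for log-concave isotropic measures, converting the analytic inequality into a volumetric estimate accessible through Brunn--Minkowski-type tools.
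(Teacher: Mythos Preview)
This theorem is not proved in the paper: it is quoted from \cite{BGP2} (Brazitikos--Giannopoulos--Pafis), with the remark that the statement also relies on the recent resolution of the isotropic constant conjecture. So there is no proof here to compare your attempt against.

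That said, the paper does record one piece of the argument from \cite{BGP2}: the upper bound is obtained by showing $\E_\mu[e^{-\Lambda_\mu^\ast}]\le e^{-cn}$. Your plan for the upper bound starts the same way, but you correctly identify that the crux is the lower-tail estimate $\mu(\Lambda_\mu^\ast\le cn)\le e^{-c'n}$, and you do not supply it; so your upper bound is at best an outline with the hard step missing. For the lower bound, your route through the reverse depth--Cram\'er inequality and Jensen hinges on a linear estimate $\E_\mu[\Lambda_\mu^\ast]\le Cn$ that you attribute to Giannopoulos--Tziotziou; the present paper only quotes that work for finiteness of moments, not for a quantitative linear bound, and in any case \cite{BGP2} predates \cite{Giannop-Tziotziou}, so the original argument cannot have proceeded that way. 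The paper's comment about the isotropic constant conjecture suggests that the actual proof in \cite{BGP2} uses volumetric input (isotropic-constant bounds) rather than the moment route you sketch.
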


The latter statement takes also into account the resolution of the isotropic constant conjecture, initially by Klartag and Lehec \cite{KlartagLehec}, and shortly thereafter by Bizeul \cite{Biz}. So, one may wonder whether the same bounds hold in the discrete log-concave setting. The answer for the upper bound is negative.

\begin{proposition}
For every $n \in \mathbb{N}$ we have $$\sup_X \, \mathbb{E}_X[q_X]=1,$$ where the supremum is over all random vectors $X$ in $\mathbb{Z}^n$ with independent and discrete log-concave coordinates.
\end{proposition}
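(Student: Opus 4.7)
The plan is to produce an explicit one-parameter family $\{\mu_n^\delta\}_{\delta \in (0,1)}$ of product measures on $\mathbb{Z}^n$ with independent discrete log-concave coordinates and to show that $\mathbb{E}_{\mu_n^\delta}[q_{\mu_n^\delta}] \to 1$ as $\delta \to 0^+$. Combined with the trivial pointwise bound $q_\mu \ls 1$, this yields $\sup_X \mathbb{E}_X[q_X] = 1$ for every fixed $n$. Concretely, I would take as common marginal the measure $\nu^\delta$ on $\mathbb{Z}$ defined by $\nu^\delta(\{0\}) = 1-\delta$ and $\nu^\delta(\{1\}) = \delta$. Its support is the discrete interval $\{0,1\}$, and the log-concavity inequality $p_k^2 \gr p_{k-1}p_{k+1}$ holds trivially (the right-hand side is $0$ for every $k \in \mathbb{Z}$), so $\nu^\delta$ is discrete log-concave in the sense of Definition~\ref{def:one-dimension}. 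Letting $X=(X_1,\dots,X_n)$ have i.i.d.\ coordinates distributed as $\nu^\delta$, the product law $\mu_n^\delta$ has full-dimensional support $\{0,1\}^n$, so it fits the class in the statement.

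The key step is a uniform lower bound on the half-space depth at the origin:
\[
q_{\mu_n^\delta}(0) \gr (1-\delta)^n.
\]
For this, fix any $\theta \in S^{n-1}$ and set $I(\theta) = \{i : \theta_i < 0\}$. For every $y \in \{0,1\}^n$ with $y_i = 0$ for all $i \in I(\theta)$, one has $\langle y, \theta\rangle = \sum_{i \notin I(\theta)} \theta_i y_i \gr 0$, so $y$ lies in the half-space $H^+_\theta = \{z : \langle z, \theta\rangle \gr 0\}$. The $\mu_n^\delta$-mass of the set $\{y \in \{0,1\}^n : y_i = 0 \ \forall i \in I(\theta)\}$ is exactly $(1-\delta)^{|I(\theta)|} \gr (1-\delta)^n$, and taking the infimum over $\theta$ yields the bound. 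Combining this with $\mu_n^\delta(\{0\}) = (1-\delta)^n$, I would conclude
\[
\mathbb{E}_X[q_X] \;\gr\; q_{\mu_n^\delta}(0)\,\mu_n^\delta(\{0\}) \;\gr\; (1-\delta)^{2n}.
\]
Letting $\delta \to 0^+$ with $n$ fixed, the right-hand side tends to $1$, which together with the trivial upper bound $\mathbb{E}_X[q_X] \ls 1$ gives $\sup_X \mathbb{E}_X[q_X] = 1$.

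The argument is essentially frictionless; the only point demanding care is the lower bound on $q_{\mu_n^\delta}(0)$, where one must notice that even the ``worst'' direction $\theta = -(1,\dots,1)/\sqrt{n}$ strips $H^+_\theta \cap \{0,1\}^n$ down to the single point $\{0\}$, whose mass is still $(1-\delta)^n$ and hence close to $1$ for small $\delta$. I would view this as confirming that in the discrete log-concave setting the analogue of the upper bound $\E_\mu[q_\mu]\ls e^{-c_2 n}$ from Theorem~\ref{thm:Tukey-expectation} genuinely fails, as the mass may concentrate arbitrarily close to a single lattice atom without violating log-concavity.
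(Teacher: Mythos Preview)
Your proof is correct and rests on the same idea as the paper's: take Bernoulli marginals with small success probability so that nearly all the mass sits on a single atom. The paper uses coordinate-dependent parameters $p_i=\varepsilon/2^{i+1}$ together with the exact identity $q_X(x)=\mathbb{P}(X=x)$ for every vertex $x\in\{0,1\}^n$, and then evaluates $\mathbb{E}_X[q_X]=\sum_x \mathbb{P}(X=x)^2=\prod_i\bigl(1-2p_i(1-p_i)\bigr)\ge 1-\varepsilon$. You instead keep the coordinates i.i.d., forgo the exact depth computation, and simply lower-bound $\mathbb{E}_X[q_X]$ by the single contribution $q_X(0)\,\mu_n^\delta(\{0\})\ge(1-\delta)^{2n}$. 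Your version is slightly more elementary, since it only needs a lower bound on $q_X$ at one point rather than the identification $q_X(x)=\mathbb{P}(X=x)$ at every vertex; the paper's version has the advantage of producing an explicit closed-form expression for the full expectation.
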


\begin{proof}
    Let $\varepsilon \in (0,1)$ and consider independent random variables $X_1,\ldots,X_n$  with $X_i \sim \text{Bernoulli}(p_i)$, where $p_i=\frac{\varepsilon}{2^{i+1}}$ for every $1 \leq i \leq n$. If we set $X=(X_1,\ldots,X_n)$, then $q_X(x)=\mathbb{P}(X=x)$ for every $x \in \{0,1\}^n$. Thus, \begin{align*} \mathbb{E}_X[q_X] &=\sum\limits_{x \in \{0,1\}^n} \mathbb{P}(X=x)^2=\sum\limits_{x \in\{0,1\}^n} \prod\limits_{i=1}^{n} \mathbb{P}(X_i=x_i)^2
    \\ &=\prod\limits_{i=1}^{n}\ \sum\limits_{x \in \{0,1\}} \mathbb{P}(X_i=x_i)^2 = \prod\limits_{i=1}^n (1-2p_i(1-p_i)) 
    \\ &\geq 1-2\sum\limits_{i=1}^n p_i(1-p_i) \geq 1-\varepsilon.
    \end{align*}
\end{proof}

\begin{remark}
    In fact, in the proof of the upper bound of Theorem~\ref{thm:Tukey-expectation} the authors showed that if $\mu$ is a log-concave probability measure on $\mathbb{R}^n$, then $$\E_{\mu}[e^{-\Lambda_{\mu}^\ast}] \leq e^{-c_1n},$$ where $c_1>0$ is an absolute constant. Applying Jensen's inequality we get $$\mathbb{E}_{\mu}[\Lambda_{\mu}^\ast] \geq c_1 n.$$
They also demonstrated in \cite[Corollary 1.5]{BGP2} that there exists an absolute constant $c_2>0$ such that 
 $$\sup_{\mu }\Big(\sup\Big\{{\mathbb E}_{\mu^N}[\mu (K_N)]:N\leq e^{c_1n}\Big\}\Big)\longrightarrow 0$$
 as $n \to \infty$, where the first supremum is over all log-concave probability measures $\mu$ on $\mathbb{R}^n$. Theorem~\ref{thm:lattice-ball-threshold} combined with Remark~\ref{rem:lattice-ball}, and especially \eqref{eq:expectation-bernoulli} along with Lemma~\ref{lem:counter-lower}, show that no analogue of either result can hold in the discrete log-concave setting.
\end{remark}

Let now $\mu$ be a probability measure on $\mathbb{R}$. Considering the inequality $ e^{\Lambda_{\mu}^\ast } \leq 1/q_{\mu}$ and \cite[Proposition 3.2 b)]{BC}, we now know that $\mathbb{E}_{\mu}[e^{\alpha\Lambda_{\mu}^\ast}]<+\infty$ for every $\alpha \in (0,1)$. In particular, $\Lambda_{\mu}^{\ast}$ has all its moments finite. In the following, we present examples of probability measures on $\mathbb{R}^n$ with specific properties that nevertheless do not imply finiteness even of $\mathbb{E}_{\mu}[\Lambda_{\mu}^\ast]$. 

Before the result of Brazitikos and Chasapis that we mentioned, it was known that if a probability measure $\mu$ on $\mathbb{R}$  has finite first moment, then $\mathbb{E}_{\mu}[e^{\alpha\Lambda_{\mu}^\ast}]<+\infty$ for every $\alpha \in (0,1)$ (see \cite[Exercise 1.2.9]{DeuStr}).  We first show that this property does not extend to $\mathbb{R}^n, \, n \geq 2$. In fact, we show something stronger. That even compact support is not enough to ensure that the first moment of $\Lambda_{\mu}^\ast$ is finite.

\begin{proposition}
    Let $n \geq 2$. There exists a full-dimensional, compactly supported probability measure $\mu_n$ on $\mathbb{R}^n$ with $\mathbb{E}_{\mu_n}[\Lambda_{\mu_n}^\ast]=+\infty.$
\end{proposition}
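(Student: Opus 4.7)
The plan is to build $\mu_n$ as a purely atomic probability measure whose atoms lie in strictly convex position inside a compact set, with weights $(c_k)_{k\ge 1}$ chosen so that $\sum c_k<+\infty$ but $\sum c_k\log(1/c_k)=+\infty$. The geometric mechanism I will exploit is that at an \emph{exposed} atom of a discrete probability measure the Cram\'er transform is computable exactly and equals $\log(1/c_k)$; this lifts the divergent ``entropy-type'' series directly into the $\mu_n$-integral of $\Lambda_{\mu_n}^\ast$.

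First, I fix the configuration. For $n\ge 2$, let $x_k=(\cos(1/k),\sin(1/k),0,\dots,0)\in\mathbb{R}^n$ for $k\ge 1$ and assign mass $c_k=C/(k\log^2(k+2))$ to $x_k$, choosing $C>0$ together with small fixed positive weights on auxiliary atoms (e.g.\ at $-e_1,e_3,\dots,e_n$) so that the total mass equals $1$. The support is contained in the closed unit ball (hence compact) and spans $\mathbb{R}^n$ affinely, so $\mu_n$ is full-dimensional and no hyperplane carries full mass.

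The key calculation is that $\Lambda_{\mu_n}^\ast(x_k)=\log(1/c_k)$ for every $k\ge 1$. The upper bound $\Lambda_{\mu_n}^\ast(x_k)\le\log(1/c_k)$ is immediate from $\phi_{\mu_n}(\xi)\ge c_k e^{\langle\xi,x_k\rangle}$, which gives $\langle\xi,x_k\rangle-\Lambda_{\mu_n}(\xi)\le\log(1/c_k)$ for every $\xi$. For the matching lower bound I use the exposing direction $\theta_k=x_k$: by construction $\langle\theta_k,x_k\rangle=1$ while $\langle\theta_k,y\rangle<1$ strictly for every other atom $y$ (the $x_j$ with $j\neq k$ lie on the unit circle at different angles, and the auxiliary atoms lie strictly below the tangent hyperplane to the unit sphere at $x_k$). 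Plugging $\xi=t\theta_k$,
\[
e^{-t}\phi_{\mu_n}(t\theta_k)\ =\ c_k+\sum_{y\neq x_k}\mu_n(\{y\})\,e^{-t(1-\langle\theta_k,y\rangle)}\ \xrightarrow[t\to\infty]{}\ c_k,
\]
by dominated convergence (each term tends to $0$ and is dominated by the summable $\mu_n(\{y\})$). Hence $t-\Lambda_{\mu_n}(t\theta_k)\to\log(1/c_k)$, so $\Lambda_{\mu_n}^\ast(x_k)\ge\log(1/c_k)$, giving equality. Summing,
\[
\mathbb{E}_{\mu_n}[\Lambda_{\mu_n}^\ast]\ \ge\ \sum_{k\ge 1} c_k\log(1/c_k)\ \asymp\ \sum_{k\ge 1}\frac{\log k}{k\log^2(k+2)}\ \asymp\ \sum_{k\ge 1}\frac{1}{k\log k}\ =\ +\infty.
\]

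The main point requiring care is the \emph{uniform} strictness in the exposedness step, i.e.\ confirming that the gap $1-\langle\theta_k,y\rangle$ is strictly positive for every fixed $y\neq x_k$ so that dominated convergence applies in $t\to\infty$. With the above concrete configuration this is automatic from compactness and the unit-circle geometry, but any more general construction must build in a quantitative separation between each $x_k$ and the rest of the support along $\theta_k$. I also note that the restriction $n\ge 2$ is essential: on $\mathbb{R}$ the convex hull of a compact support has only two extreme points, so at most two atoms can receive the $\log(1/c_k)$ boost, which never suffices for divergence --- consistent with the fact recalled in the paper that the first moment of $\Lambda_\mu^\ast$ is always finite in dimension one.
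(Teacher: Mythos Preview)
Your proof is correct and follows essentially the same approach as the paper: atoms placed at $x_k=(\cos(1/k),\sin(1/k),0,\dots,0)$ with weights $c_k\asymp 1/(k\log^2 k)$, the exposing direction $\theta_k=x_k$, and the limit $t\to\infty$ yielding $\Lambda_{\mu_n}^\ast(x_k)\ge\log(1/c_k)$, after which the divergent entropy series $\sum c_k\log(1/c_k)$ finishes the job. The only cosmetic differences are that the paper secures full-dimensionality by perturbing the first few $x_k$ rather than adding auxiliary atoms, and that you additionally observe the matching upper bound $\Lambda_{\mu_n}^\ast(x_k)\le\log(1/c_k)$ (not needed, but a nice touch), while invoking dominated convergence in place of the paper's explicit uniform gap $\cos(1/(k(k+1)))-1$.
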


\begin{proof}
Let $n\geq 2$. We set $$x_2=0, \qquad x_k=(\cos (1/k), \sin (1/k),
\ 0\ , \ldots,\underbrace{1}_{k-\text{index} \ } ,\ 0\ ,\ldots,0), \qquad 3 \leq k\leq n $$ and $$x_k=(\cos(1/k),\sin(1/k), \ 0\ ,\ldots, \ 0), \qquad k \geq n+1 .$$ For every $k \geq 2$ we also set  $p_k=c/k \log^2 k$, where $c>0$ is a normalizing constant such that $\sum_{k=2}^{\infty}p_k=1$. We define the probability measure $$\mu_n= \sum\limits_{k=2}^{\infty} p_k \delta_{x_k},$$ where $\delta_{x_k}$ is the Dirac measure at $x_k$. Note that $\mu_n$ is full-dimensional. It is also clear that  since $\mu_n$ is compactly supported, then $\Lambda_{\mu_n}(\xi) < +\infty$ for every $\xi \in \mathbb{R}^n$.

 Let now $k \geq 3$ and $t>0$. Setting $\xi_k=(\cos(1/k),\sin(1/k), \ 0\ ,\ldots, \ 0)$, we have $$e^{\Lambda_{\mu_n}(t\xi_k)} = \sum\limits_{m=2}^{\infty}p_m e^{t\langle \xi_k, x_m \rangle} = e^{t \langle \xi_k, x_k\rangle} \left(p_k+ \sum\limits_{\substack{m=2 \\ m \neq k}}^{\infty}p_m e^{t \langle \xi_k,x_m-x_k \rangle} \right).$$
Now, if $m \geq 2$ and $m \neq k$ then $$\langle \xi_k, x_m-x_k \rangle \leq \cos \left(\frac{1}{k}-\frac{1}{m}\right)-1 \leq \cos\left( \frac{1}{k(k+1)} \right)-1=B_k<0.$$
Therefore, $$e^{\Lambda_{\mu_n}(t\xi_k)} \leq e^{t} \left(p_k +e^{tB_k}\sum\limits_{\substack{m=2 \\ m \neq k}}^{\infty}p_m \right) \leq e^t \left(p_k +e^{tB_k} \right).$$
Thus, $$\Lambda_{\mu_n}^{\ast}(x_k) \geq \langle t\xi_k, x_k\rangle -\Lambda_{\mu_n}(t\xi_k) \geq t-t-\log(p_k+e^{tB_k})=-\log(p_k+e^{tB_k}).$$
Letting  $t \to +\infty$, we get that $\Lambda_{\mu_n}^{\ast}(x_k) \geq \log (1/p_k)$ and consequently, since $\Lambda_{\mu_n}^{\ast}(x_2)\geq 0$, $$\mathbb{E}_{\mu_n}[\Lambda_{\mu_n}^{\ast}] \geq \sum\limits_{k=3}^{\infty} \Lambda_{\mu_n}^{\ast}(x_k)p_k \geq \sum\limits_{k=3}^{\infty} p_k \log \left( \frac{1}{p_k} \right)= +\infty. $$
\end{proof}

\begin{remark}
    One can also show that the sequence $(\mu_n)_{n \in \mathbb{N}}$ does not exhibit a threshold by slightly modifying Lemma~\ref{lem:counter-lower} and Lemma~\ref{lem:counter-upper}. 
\end{remark}

Let now $\mu$ be an absolutely continuous probability measure on $\mathbb{R}$. In \cite[Remark 3.3]{BC}, one can find a simpler proof of the fact that $\mathbb{E}_{\mu}[e^{\alpha\Lambda_{\mu}^\ast}]<+\infty$ for every $\alpha \in (0,1)$. One might ask whether this property is enough to ensure that $\mathbb{E}_{\mu}[\Lambda_{\mu}^{\ast}]$ is finite. However, we show that in $\mathbb{R}^n, \, n \geq 2 , $ this condition is not sufficient again.

\begin{proposition}
    Let $n \geq 2$. There exists a full-dimensional, absolutely continuous probability measure $\mu_n$ on $\mathbb{R}^n$ with $\mathbb{E}_{\mu_n}[\Lambda_{\mu_n}^\ast]=+\infty.$
\end{proposition}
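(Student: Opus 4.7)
The plan is to promote the atomic construction of the previous proposition to an absolutely continuous version by replacing each Dirac mass with the uniform distribution on a tiny $n$-dimensional ball, and then to check that the crucial lower bound $\Lambda_{\mu_n}^{\ast}(x_k)\geq \log(1/p_k)$ survives (up to a constant factor) on a set of positive $\mu_n$-mass surrounding $x_k$. Fix $n\geq 2$ and, for $k\geq 2$, set $x_k=(\cos(1/k),\sin(1/k),0,\ldots,0)\in\mathbb{R}^n$, $p_k=c/(k\log^2(k+3))$ (with $c>0$ a normalizing constant so that $\sum_k p_k=1$), and $r_k=1/k^{10}$. Let $\nu_k$ denote the uniform probability measure on the full-dimensional ball $B(x_k,r_k)\subseteq\mathbb{R}^n$, and set
\[
\mu_n\ :=\ \sum_{k\geq 2} p_k\,\nu_k.
\]
Using $|x_k-x_m|\geq |1/k-1/m|/2\geq 1/(2km)$, one checks that with $r_k=1/k^{10}$ the balls $B(x_k,r_k)$ are pairwise disjoint; hence $\mu_n$ has density $\sum_k (p_k/\vol_n(B(x_k,r_k)))\mathds{1}_{B(x_k,r_k)}$ and is absolutely continuous. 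Full-dimensionality is automatic since each ball has non-empty interior.

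The core estimate is that for each sufficiently large $k$ and every $y\in E_k:=B(x_k,r_k/2)$,
\[
\Lambda_{\mu_n}^{\ast}(y)\ \geq\ \tfrac{1}{2}\log(1/p_k).
\]
To prove this I feed the test direction $\xi=tx_k$ into the variational definition of $\Lambda_{\mu_n}^{\ast}$. For $y\in E_k$ one has $\langle y,\xi\rangle\geq t(1-r_k/2)$, while splitting the MGF into the contribution of $\nu_k$ and the rest yields
\[
\phi_{\mu_n}(\xi)\ \leq\ p_k\, e^{t(1+r_k)}\ +\ \sum_{m\neq k} p_m\, e^{t(1-\delta_{k,m})},
\]
where $\delta_{k,m}:=1-\cos(1/k-1/m)-r_m$. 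Since $|1/k-1/m|\geq 1/(k(k+1))$ whenever $m\neq k$, a direct check gives $\delta_{k,m}\geq c_0/k^4$ uniformly in $m\neq k$, because the choice $r_m=1/m^{10}$ is much smaller than the cosine gap. Taking $t=k^6$ then forces $tr_k=1/k^4\to 0$ and $t\delta_{k,m}\geq c_0 k^2\to\infty$, so
\[
\Lambda_{\mu_n}^{\ast}(y)\ \geq\ -\tfrac{tr_k}{2}\ -\ \log\!\Bigl[p_k e^{tr_k}+\sum_{m\neq k} p_m e^{-t\delta_{k,m}}\Bigr]\ \longrightarrow\ \log(1/p_k)
\]
as $k\to\infty$, uniformly in $y\in E_k$, by dominated convergence applied to the tail sum.

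Pairwise disjointness gives $\mu_n(E_k)=p_k\,\nu_k(E_k)=p_k/2^n$, and hence
\[
\E_{\mu_n}[\Lambda_{\mu_n}^{\ast}]\ \geq\ \sum_{k\geq k_0}\mu_n(E_k)\cdot \tfrac{1}{2}\log(1/p_k)\ \geq\ \frac{1}{2^{n+1}}\sum_{k\geq k_0}\frac{c\,\log k}{k\log^2(k+3)}\ =\ +\infty.
\]
The main technical obstacle is the uniform lower bound $\delta_{k,m}\geq c_0/k^4$: one must simultaneously treat $m$ close to $k$, where the cosine gap is itself of order $1/k^4$ and the smearing radius must be kept much smaller to leave room, and $m$ far from $k$, where the cosine gap dominates trivially. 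Balancing $t r_k\to 0$ with $t\delta_{k,m}\to\infty$ is precisely what dictates the polynomial rate $r_k\sim 1/k^{10}$; any faster polynomial decay of $r_k$ would also work.
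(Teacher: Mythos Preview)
Your argument is correct and follows the same scheme as the paper's: replace each atom by a tiny uniform ball, then feed a well-chosen direction into the definition of $\Lambda_{\mu_n}^\ast$ to recover $\Lambda_{\mu_n}^\ast\gtrsim\log(1/p_k)$ on a set of $\mu_n$-mass $\asymp p_k$, and conclude by the divergence of $\sum p_k\log(1/p_k)$. The difference is in the geometry. You stay with the circle points $x_k=(\cos(1/k),\sin(1/k),0,\dots,0)$ from the atomic proposition and use the radial test direction $\xi=tx_k$; this forces very small smearing radii $r_k=k^{-10}$ and a careful balancing of $tr_k\to 0$ against $t\delta_{k,m}\to\infty$. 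The paper instead switches to parabola points $x_k=(k,k^2,0,\dots,0)$ with $r_k=2/(9k^2)$ and test direction $\xi_k=(6k\log k,-3\log k,0,\dots,0)$, where the inner products separate by $-3\log k\,(k-m)^2$ so the tail sum is a geometric-type series and no delicate $\delta_{k,m}$-analysis is needed. Your version has the pleasant side effect of being compactly supported, while the paper's is unbounded; on the other hand the paper's choice makes the MGF estimate cleaner and the radii larger.

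Two cosmetic remarks. First, the bound $|x_k-x_m|\ge 1/(2km)$ by itself is \emph{not} enough for disjointness (try $k=2$, $m$ large), but your intermediate estimate $|x_k-x_m|\ge|1/k-1/m|/2$ is, so the claim stands once you use that sharper form in the verification. Second, ``dominated convergence'' is unnecessary for the tail: since $t\delta_{k,m}\ge c_0k^2$ uniformly in $m$, you simply have $\sum_{m\neq k}p_me^{-t\delta_{k,m}}\le e^{-c_0k^2}$.
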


\begin{proof}
Let $n \geq 2$. For every $k \geq 2$ we set $x_k=(k,k^2,0,\ldots,0) \in \mathbb{R}^n$ and $p_k=c/k\log^2k$, where $c>0$ is a normalizing constant such that $\sum_{k=2}^{\infty}p_k=1$. We also choose $r_k=\frac{2}{9k^2}$. Notice that $(B(x_k,r_k))_{k \geq 2}$ are disjoint sets.  We then define $\mu_n$ to be the probability measure on $\mathbb{R}^n$ with density $$f_n(x)=\sum\limits_{k=2}^{\infty} \frac{p_k}{\vol_n(B(x_k,r_k))} \cdot \mathds{1}_{B(x_k,r_k)}(x), \ x\in \mathbb{R}^n.$$
For every $k \geq 2$ we also set $\xi_k=(2bk\log k,-b\log k,0, \ldots, 0) \in \mathbb{R}^n$, where $b>0$ is a constant that will be determined later. Then for every $m,k \geq 2$ we have
$$\langle x_m,\xi_k \rangle=b\log k \ (k^2-(k-m)^2),$$ which gives that for every $m,k\geq 2$ with $m \neq k$, we have: \begin{equation}\label{eq:equation1}
\langle x_k,\xi_k \rangle=b k^2 \log k \, \qquad \langle \xi_k, x_m-x_k \rangle =-b \log k \ (k-m)^2.
\end{equation}
Now, let $k \geq 2 $. We write
\begin{equation}\label{eq:equation2}
\begin{aligned} e^{\Lambda_{\mu_n}(\xi_k)} &= \sum\limits_{m=2}^{\infty} \frac{p_m}{\vol_n(B(x_m,r_m))}\int\limits_{B(x_m,r_m)} e^{ \langle \xi_k,y\rangle } \, dy \\
&= e^{ \langle \xi_k,x_k \rangle } \Bigg(\frac{p_k}{\vol_n(B(x_k,r_k))} \int\limits_{B(x_k,r_k)} e^{ \langle \xi_k,y- x_k\rangle } \, dy \\ &\hspace{1cm}+\sum\limits_{\substack{m=2 \\ m \neq k}}^{\infty} \frac{p_m}{\vol_n(B(x_m,r_m))} \int\limits_{B(x_m,r_m)} e^{ \langle \xi_k,y- x_m\rangle + \langle \xi_k,x_m- x_k\rangle} \, dy \Bigg).
\end{aligned}
\end{equation}
Now for every $m \geq 2$ and $y \in B(x_m,r_m) $ we have \begin{equation}\label{eq:equation3} 
\langle \xi_k,y-x_m \rangle \leq \|\xi_k\|_2 \ r_m \leq 3b r_m k\log k.
\end{equation}
So, by \eqref{eq:equation1}, \eqref{eq:equation2} and \eqref{eq:equation3} we obtain:
\begin{equation}\label{eq:equation4}
\begin{aligned}e^{\Lambda_{\mu_n}(\xi_k)} &\leq e^{ \langle \xi_k, x_k \rangle} \left( p_k e^{3bk r_k \log k} + \sum\limits_{\substack{m=2 \\ m \neq k}}^{\infty} p_m e^{3br_mk \log k -b \log k (k-m)^2} \right)
\\ 
&= e^{ \langle \xi_k, x_k \rangle} p_k e^{3bk r_k \log k} \left(1+ \sum\limits_{\substack{m=2 \\ m \neq k}}^{\infty} \frac{p_m}{p_k} e^{b \log k (3k(r_m-r_k)-(k-m)^2)}\right)
\end{aligned}
\end{equation}
Now it is easy to show that 
\begin{equation}\label{eq:equation6}
3k(r_m-r_k) \leq 3kr_m =\frac{2k}{3m^2} \leq \frac{(k-m)^2}{2}
\end{equation}
for every $m,k \geq 2$ with $m \neq k$. Moreover, for $m,k \geq 2$: \begin{equation}\label{eq:equation7}
\frac{p_m}{p_k} = \frac{k \log^2k}{m \log^2 m} \leq A k \log^2 k,
\end{equation}
where $A=\frac{1}{2 \log^22}$. Using \eqref{eq:equation6} and \eqref{eq:equation7}, we get:
\begin{equation}\label{eq:equation5}
\begin{aligned} \sum\limits_{\substack{m=2 \\ m \neq k}}^{\infty} \frac{p_m}{p_k} e^{b \log k (3k(r_m-r_k)-(k-m)^2)} &\leq A k \log^2 k \sum\limits_{\substack{m=2 \\ m \neq k}}^{\infty} e^{-b  \frac{(k-m)^2}{2} \log k} 
\\
&\leq 2Ak \log^2 k \sum\limits_{m=1}^{\infty} e^{-b  \frac{m^2}{2} \log k } = 2Ak \log^2 k \sum\limits_{m=1}^{\infty} \left( \frac{1}{k^{b/2}}\right)^{m^2} 
\\
&\leq 4A k^{1-b/2} \log^2 k ,
\end{aligned}
\end{equation}
if $b \geq 2$.  We choose $b=3$. Thus, by \eqref{eq:equation4} and \eqref{eq:equation5}, there exists $k_0 \in \mathbb{N}$ such that for every $k \geq k_0$: $$e^{\Lambda_{\mu_n}(\xi_k)} \leq 4e^{\langle x_k , \xi_k \rangle}p_k, \qquad \|\xi_k\|_2 \ r_k<1.$$
Let now  $k \geq k_0$ and $x \in B(x_k,r_k)$. We have
\begin{align*}
   \Lambda_{\mu_n}^{\ast}(x) &\geq \langle \xi_k,x \rangle - \Lambda_{\mu_n}(\xi_k) \geq \langle \xi_k,x-x_k\rangle -\log (4p_k)  \\
&\geq -\|\xi_k\|_2r_k-\log(4p_k) \geq -1-\log(4p_k).
\end{align*}
Therefore, \begin{align*}\mathbb{E}_{\mu_n}[\Lambda_{\mu_n}^{\ast}] & \geq C+\sum\limits_{k=k_0}^{\infty} \frac{p_k}{\vol_n(B(x_k,r_k))} \int\limits_{B(x_k,r_k)} \Lambda_{\mu_n}^{\ast}(x) \, dx
\\
&\geq C-1+\sum\limits_{k=k_0}^{\infty} p_k \log\left(\frac{1}{4p_k}\right)=+\infty.
\end{align*}
\end{proof}

\bigskip
\textbf{Acknowledgments.} We would like to thank Apostolos Giannopoulos for valuable comments and suggestions and Mihalis Kolountzakis for the example in Remark \ref{koloun}.
\medskip

\footnotesize

\noindent\textbf{Mathematics Subject Classification (2020).}
Primary: 60D05, 52A22, 60F10; Secondary: 52B05, 60G70, 39A12.

\noindent\textbf{Keywords.}
random polytopes; Tukey depth; Cram\'er transform; discrete log-concavity; threshold phenomena.

\end{document}